\newtheorem{theorem}{Theorem}[section]
\newtheorem{corollary}[theorem]{Corollary}
\newtheorem{lemma}[theorem]{Lemma}
\newtheorem{proposition}[theorem]{Proposition}
\newtheorem{definition-proposition}[theorem]{Definition-Proposition}
\theoremstyle{definition}
\newtheorem{definition}[theorem]{Definition}
\newtheorem{remark}[theorem]{Remark}
\newtheorem{example}[theorem]{Example}
\newcommand{\Ext}{\operatorname{Ext}\nolimits}
\newcommand{\Tor}{\operatorname{Tor}\nolimits}
\newcommand{\Hom}{\operatorname{Hom}\nolimits}
\newcommand{\End}{\operatorname{End}\nolimits}
\newcommand{\Tr}{\operatorname{Tr}\nolimits}
\renewcommand{\mod}{\mathsf{mod}\hspace{.01in}}
\newcommand{\Mod}{\mathsf{Mod}\hspace{.01in}}
\newcommand{\add}{\mathsf{add}\hspace{.01in}}
\newcommand{\Fac}{\mathsf{Fac}\hspace{.01in}}
\newcommand{\tilt}{\mbox{\rm tilt}\hspace{.01in}}
\newcommand{\sttilt}{\mbox{\rm s$\tau$-tilt}\hspace{.01in}}
\newcommand{\op}{\operatorname{op}\nolimits}
\newcommand{\bo}{\operatorname{b}\nolimits}
\newcommand{\DD}{\mathsf{D}}
\newcommand{\TT}{\mathcal{T}}
\newcommand{\rad}{\operatorname{rad}\nolimits}
\newcommand{\soc}{\operatorname{soc}\nolimits}
\newcommand{\projdim}{\mathop{{\rm proj.dim}}\hspace{.01in}}
\newcommand{\injdim}{\mathop{{\rm inj.dim}}\hspace{.01in}}
\newcommand{\gldim}{\mathop{{\rm gl.dim}}\hspace{.01in}}
\newcommand{\RHom}{\mathbf{R}\strut\kern-.2em\operatorname{Hom}\nolimits}
\newcommand{\LotimesL}{\mathop{\otimes^{\mathbf{L}}_\Lambda}\nolimits}
\numberwithin{equation}{section}
\def\Tr{\mathop{\rm Tr}\nolimits}
\def\rad{\mathop{\rm rad}\nolimits}
\def\gldim{\mathop{\rm gl.dim}\nolimits}
\def\dim{\mathop{\rm dim}\nolimits}
\begin{document}
\title{Classifying $\tau$-tilting modules over the Auslander algebra of $K[x]/(x^n)$}
\thanks{2000 Mathematics Subject Classification: 16G10, 16E10.}
\thanks{Keywords: Auslander algebra,  tilting module,
 $\tau$-tilting module, symmetric group, preprojective algebra.}

\thanks{The first author is supported
by JSPS Grant-in-Aid for Scientific Research (B) 24340004, (C)
23540045 and (S) 15H05738. The second author is supported by NSFC
(Nos.11101217, 11401488 and 11571164) and Jiangsu Government
Scholarship for Overseas Studies (JS-2014-352). }

\author{Osamu Iyama}
\address{O. Iyama: Graduate School of Mathematics, Nagoya
University, Nagoya, 464-8602, Japan}
\email{iyama@math.nagoya-u.ac.jp}


\author{Xiaojin Zhang}
\address{X. Zhang: School of Mathematics and Statistics, NUIST,
Nanjing, 210044, P. R. China} \email{xjzhang@nuist.edu.cn}
\maketitle

\begin{abstract} We build a bijection between the set $\sttilt\Lambda$ of isomorphism classes of basic support
$\tau$-tilting modules over the Auslander algebra $\Lambda$ of $K[x]/(x^n)$ and the symmetric group $\mathfrak{S}_{n+1}$,
which is an anti-isomorphism of partially ordered sets with respect to the generation order on $\sttilt\Lambda$ and the left order on $\mathfrak{S}_{n+1}$.
This restricts to the bijection between the set $\tilt\Lambda$ of isomorphism classes of basic tilting $\Lambda$-modules and
the symmetric group $\mathfrak{S}_n$ due to Br\"{u}stle, Hille, Ringel and R\"{o}hrle.
Regarding the preprojective algebra $\Gamma$ of Dynkin type $A_n$ as a factor algebra of $\Lambda$,
we show that the tensor functor $-\otimes_{\Lambda}\Gamma$ induces a bijection between $\sttilt\Lambda\to\sttilt\Gamma$.
This recover Mizuno's anti-isomorphism $\mathfrak{S}_{n+1}\to\sttilt\Gamma$ of posets for type $A_n$.
\end{abstract}

\tableofcontents

\section{Introduction}

Tilting theory has been central in the representation theory of
finite dimensional algebras since the early seventies \cite{BGP,
AuPR, B, BrB, HaR}. In this theory, tilting modules play a central
role. So it is important to classify tilting modules for a given
algebra. There are many algebraists working on this topic which
makes the theory fruitful. For more details about classical tilting
modules we refer to \cite{AsSS,AnHK}.

Recently Adachi, Iyama and Reiten \cite{AIR} introduced
$\tau$-tilting
 theory to generalize the classical tilting theory from viewpoint of
 mutations. This is very close to the silting theory
 (e.g.\ \cite{AiI,DF,HKM,KV}) and the cluster tilting
 theory (e.g.\ \cite{BMRRT,IY,KR}).
 The central notion of $\tau$-tilting theory is support $\tau$-tilting modules, and therefore it is important to classify
 support $\tau$-tilting modules for a given algebra.
Recently some authors worked on this topic, e.g. Adachi \cite{A1}
classified $\tau$-rigid modules for Nakayama algebras, Adachi
\cite{A2} and Zhang \cite{Z1} studied $\tau$-rigid modules for
algebras with radical square zero, and Mizuno \cite{M} classified
support $\tau$-tilting modules for preprojective algebras of Dynkin
type. In this context, it is basic to consider algebras with only
finitely many support $\tau$-tilting modules, called $\tau$-tilting
finite algebras and studied by Demonet, Iyama and Jasso \cite{DIJ}.
For more details of $\tau$-tilting theory, we refer to \cite{AAC,
AIR, AnMV, DIRRT, HuZ, J, IJY, IRRT,W, Zh} and so on.

In this paper we focus on classifying tilting modules and support
$\tau$-tilting modules over a class of Auslander algebras. Recall
that an algebra $\Lambda$ is called an Auslander algebra if the
global dimension of $\Lambda$ is less than or equal to 2 and the
dominant dimension of $\Lambda$ is greater than or equal to 2. It is
showed by Auslander there is a one-to-one correspondence between
Auslander algebras and algebras of finite representation type.

In the rest, let $\Lambda$ be the Auslander algebra of the algebra
$K[x]/(x^n)$. Then $\Lambda$ is presented by the quiver
\[\xymatrix{
1\ar@<2pt>[r]^{a_1}&2\ar@<2pt>[r]^{a_2}\ar@<2pt>[l]^{b_2}&3\ar@<2pt>[r]^{a_3}\ar@<2pt>[l]^{b_3}&\cdots\ar@<2pt>[r]^{a_{n-2}}\ar@<2pt>[l]^{b_4}&n-1\ar@<2pt>[r]^{a_{n-1}}\ar@<2pt>[l]^{b_{n-1}}&n\ar@<2pt>[l]^{b_n}
}\] with relations $a_{1} b_{2}= 0$ and $a_{i} b_{i+1} =b_{i}
a_{i-1}$ for any $2 \leq i \leq n-1$. All modules in this paper are
right modules. Denote by $\tilt\Lambda$ the set of isomorphism
classes of basic tilting $\Lambda$-modules. We show that each
tilting $\Lambda$-module is isomorphic to a product of maximal
ideals $I_{1}, \ldots , I_{n-1}$ of $\Lambda$. Moreover, we show a
strong relationship between basic tilting $\Lambda$-modules and the
symmetric group $\mathfrak{S}_{n}$.

For $w,w' \in\mathfrak{S}_{n}$ and $1\leq i\leq n$, we denote the product $w'w \in \mathfrak{S}_{n}$ by
$(w'w)(i) := {w'}(w(i))$.
 Denote by $s_{i} \in\mathfrak{S}_{n}$ the transposition $(i,i+1)$  for $1 \leq i \leq n-1$.
 The {\it length} of $w \in \mathfrak{S}_{n}$ is defined by
$ l (w) := \# \{ (i,j) \mid 1 \leq i < j \leq n , w(i) > w(j) \}$
and an expression $ w = s_{i_{1}}s_{i_{2}} \ldots s_{i_{l}} $ of $w
\in \mathfrak{S}_{n}$ is called a {\it reduced expression} if $l =
l(w)$. For elements $w,w' \in \mathfrak{S}_{n}$, if $l(w') = l(w) +
l(w' w^{-1})$ then we write $w \leq w'$. This gives a partial order
on $\mathfrak{S}_{n}$ called the {\it left order}. The Hasse quiver
of $\mathfrak{S}_{n}$ has vertices $w$ corresponding to each element
$w\in\mathfrak{S}_{n}$, and has arrows $w \rightarrow s_{i}w$ if $
l(w) > l(s_{i}w)$ and $w\leftarrow s_{i}w$ if $ l(w) < l(s_{i}w)$
for $w \in \mathfrak{S}_{n}$ and $1 \leq i \leq n-1$. Now we are in
a position to state our first main result.

\begin{theorem}[Theorems \ref{3.10}, \ref{3.17}]\label{1.1}
Let $\Lambda$ be the Auslander algebra of $K[x]/(x^n)$, and $\langle
I_{1} , \ldots , I_{n-1} \rangle$ the ideal semigroup of $\Lambda$
generated by the maximal ideals $I_{1},\ldots,I_{n-1}$.
\begin{enumerate}[\rm(1)]
\setlength{\itemsep}{0pt}
\item The set $\tilt\Lambda$ is given by $\langle I_{1} , \ldots , I_{n-1} \rangle$.
\item There exists a well-defined bijection $I:\mathfrak{S}_{n}
      \cong\langle I_{1}, \ldots,I_{n-1} \rangle$, which maps $w$ to $I(w) = I_{i_{1}} \cdots
      I_{i_{l}}$ where $w = s_{i_{1}} \cdots s_{i_{l}}$ is an arbitrary
      reduced expression.
\item Consequently there exists a bijection $I:\mathfrak{S}_n\cong\tilt\Lambda$. In particular $\#\tilt\Lambda=n!$.
\item The map $I$ in (3) is an anti-isomorphism of posets.
\end{enumerate}
\end{theorem}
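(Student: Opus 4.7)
The plan is to prove the four parts in the order (2), (1), (3), (4): parts (1) and (2) together give (3), and part (4), being about order structure, naturally comes last. The strategy rests on describing each two-sided ideal $I_{j}$ explicitly from the quiver presentation of $\Lambda$ and then exploiting the combinatorics of $\mathfrak{S}_{n}$ via Matsumoto's theorem.

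For part (2), the central step is to verify the three semigroup identities
\[
I_{i}^{2}=I_{i},\qquad I_{i}I_{j}=I_{j}I_{i}\ \text{for}\ |i-j|\ge 2,\qquad I_{i}I_{i+1}I_{i}=I_{i+1}I_{i}I_{i+1},
\]
by direct calculation inside $\Lambda$ using the relations $a_{1}b_{2}=0$ and $a_{i}b_{i+1}=b_{i}a_{i-1}$. Given these, Matsumoto's theorem implies that $I(w):=I_{i_{1}}\cdots I_{i_{l}}$ depends only on $w=s_{i_{1}}\cdots s_{i_{l}}$ and not on the chosen reduced expression, so $I\colon\mathfrak{S}_{n}\to\langle I_{1},\ldots,I_{n-1}\rangle$ is a well-defined surjection. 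Injectivity can then be obtained via a numerical invariant of $I(w)$ that recovers $l(w)$, or deferred to part (1) via a cardinality argument.

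For part (1), one direction is inductive along $l(w)$: $I(e)=\Lambda$ is tilting, and tensoring a basic tilting $\Lambda$-module on the right by $I_{j}$, when the result is distinct, produces another basic tilting module, using $\gldim\Lambda\le 2$ and the explicit structure of $I_{j}$. For the reverse inclusion, I would use connectedness of the mutation quiver of $\tilt\Lambda$ (well behaved because $\Lambda$ is representation-finite, hence $\tau$-rigid finite) and verify that every mutation of a basic tilting module is realized by multiplication on the right by some $I_{j}$; this forces every basic tilting module into $\langle I_{1},\ldots,I_{n-1}\rangle$. Together with injectivity of $I$, this yields $\#\tilt\Lambda=n!$ and the bijection in (3). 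Part (4) then follows by matching Hasse quivers: under $I$, a covering $I(w)\gtrdot I(s_{i}w)$ in the generation order on $\tilt\Lambda$ corresponds to the covering $w\lessdot s_{i}w$ in the left order on $\mathfrak{S}_{n}$, because in both cases the governing condition is $l(s_{i}w)=l(w)+1$.

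The main obstacle is twofold. Computationally, the braid relation $I_{i}I_{i+1}I_{i}=I_{i+1}I_{i}I_{i+1}$ is the most delicate step and requires careful tracking of compositions of arrows modulo the zigzag relations. Conceptually, the exhaustion argument in (1) — showing that no basic tilting module escapes the semigroup $\langle I_{1},\ldots,I_{n-1}\rangle$ — needs either a direct structural classification of tilting $\Lambda$-modules (possibly exploiting the factor-algebra relation to the preprojective algebra $\Gamma$ of type $A_{n-1}$ that the abstract foreshadows) or a tilting-mutation connectedness argument in which every mutation is explicitly realized by some $I_{j}$.
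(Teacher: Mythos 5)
The skeleton of your plan (braid/commutation relations for the ideals plus Matsumoto's theorem for well-definedness, an inductive argument that products of the $I_j$ are tilting, an exhaustion step, then a Hasse-quiver comparison) matches the paper's, but two essential steps are not actually covered by what you propose. The first is injectivity of $I\colon\mathfrak{S}_n\to\langle I_1,\ldots,I_{n-1}\rangle$. Deferring it to part (1) ``via a cardinality argument'' is circular: part (1) only identifies $\tilt\Lambda$ with the semigroup and gives no independent count, so $\#\tilt\Lambda=n!$ is an output of injectivity, not an input. A numerical invariant recovering $l(w)$ would only separate elements of different lengths. The paper's actual mechanism is genuinely nontrivial: since $I(w)$ is a tilting bimodule with $\End_\Lambda(I(w))\cong\Lambda$, the functor $-\LotimesL I(w)$ is a derived autoequivalence preserving the subcategory $\TT$ of complexes with no $S_n$-homology, and its induced action on $K_0(\TT)\cong\mathbb{Z}^{n-1}$ is exactly $w^{-1}$ under the geometric representation of $\mathfrak{S}_n$ (Lemma \ref{restriction to T}, Proposition \ref{two actions same}); faithfulness of that representation gives injectivity. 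The same machinery (via positivity of the roots $s_{i_l}\cdots s_{i_2}(\alpha_{i_1})$) is what guarantees $I_jI(w)\subsetneq I(w)$ precisely when $l(s_jw)>l(w)$, and without that strictness your part (4) argument that coverings correspond has no content.

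The second gap is the exhaustion step in (1). The paper does not use connectedness of the tilting-mutation quiver; it shows (Proposition \ref{3.8}) that a non-projective tilting module $T$ satisfies $T\subsetneq\Hom_\Lambda(I_i,T)$ for some $i$, with $\Hom_\Lambda(I_i,T)$ again tilting and $\Hom_\Lambda(I_i,T)I_i=T$, and iterates this strictly increasing chain inside the projective module $T^{**}$ (projective by $\gldim\Lambda\le2$, Lemma \ref{2.6}) until it terminates at $\add\Lambda$; this writes $T$ as a product of the $I_{i_k}$. Your alternative rests on a false premise --- $\Lambda$ is not representation-finite for larger $n$, and its $\tau$-rigid finiteness is itself a conclusion of the paper --- and on a misstatement of which operation realizes mutations: for $T\in\langle I_1,\ldots,I_{n-1}\rangle$ viewed as a right module, right multiplication $T\mapsto TI_j$ can change several summands $e_kT$, whereas it is \emph{left} multiplication $T\mapsto I_jT$ (and $\Hom_{\Lambda^{\op}}(I_j,T)$) that changes only the $j$-th summand, since $e_kI_j=e_k\Lambda$ for $k\neq j$ (Proposition \ref{3.11}). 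A connectedness route could in principle be salvaged via the Happel--Unger finite-component theorem once closure under mutation is established with the correct (left) operations, but as written the argument would fail.
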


 Theorem \ref{1.1}(3) has been shown in \cite{BHRR} by using a combinatorial method.
Our method in this paper is rather homological, and we shall modify
the method in \cite{IR,BIRS,M} for preprojective algebras to the
Auslander algebra of $K[x] / (x^{n})$ by using basic properties of
Auslander algebras in Section $2$.

 Denote by $\sttilt\Lambda$ the set of isomorphism classes of basic support $\tau$-tilting $\Lambda$-modules,
  and by $\mu_i(T)$ the mutation of $T$ with respect to the $i$-th indecomposable direct summand of
  $T$. The set $\sttilt\Lambda$ forms a poset (=partially ordered set) with
respect to the generation order (Definition \ref{2.r}). We show the following main result
of this paper in Section 4, where the map
$I:\mathfrak{S}_{n+1}\cong\sttilt\Lambda$ is an extension of the map
$I$ in Theorem 1.1.

\begin{theorem}[Theorems \ref{4.14}, \ref{4.13}, \ref{4.16}]\label{1.2}
Let $\Lambda$ be the Auslander algebra of $K[x]/(x^n)$.
\begin{enumerate}[\rm(1)]
\item $\sttilt\Lambda$ is a disjoint union of $\mu_{i+1}\mu_{i+2}\cdots\mu_n(\tilt\Lambda)$ for $0\le i\le n$.

\item There exists a bijection $I:\mathfrak{S}_{n+1}\cong\sttilt\Lambda$
which maps $w$ to $I(w)=\mu_{i_1}\mu_{i_2}\cdots\mu_{i_l}(\Lambda)$,
where $w=s_{i_{1}}s_{i_{2}}\cdots s_{i_{l}}$ is an arbitrary
expression. In particular, we have $\#\sttilt\Lambda=(n+1)!$.
\item The map $I$ in (2) is an anti-isomorphism of posets.
\end{enumerate}
\end{theorem}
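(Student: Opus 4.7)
The plan is to extend the ideal-theoretic bijection of Theorem \ref{1.1} to all support $\tau$-tilting modules by exploiting $\tau$-tilting mutation, matched with the coset decomposition $\mathfrak{S}_{n+1}=\bigsqcup_{i=0}^{n} w_i\,\mathfrak{S}_n$ where $w_i:=s_{i+1}s_{i+2}\cdots s_n$ (with $w_n=e$) is the minimal-length representative of the $i$-th coset of $\mathfrak{S}_n\hookrightarrow\mathfrak{S}_{n+1}$. These $n+1$ cosets will match the $n+1$ strata in statement (1), and each $w\in\mathfrak{S}_{n+1}$ factors as $w_iu$ with $u\in\mathfrak{S}_n$ and $l(w)=l(w_i)+l(u)$.

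For (1), I would first establish that vertex $n$ is ``terminal'' for mutation: for any $T\in\tilt\Lambda$, the summand of $T$ corresponding to vertex $n$ admits no nontrivial left mutation in the sense of \cite{AIR}, so $\mu_n(T)$ is a support $\tau$-tilting module with one fewer indecomposable summand. Iterating (applied to a suitable quotient of $\Lambda$ at each step) shows that $\mu_{i+1}\cdots\mu_n(\tilt\Lambda)$ consists of support $\tau$-tilting modules with exactly $i$ indecomposable summands, so the strata are automatically disjoint. For the union, I would use a rigidity property specific to the Auslander algebra of $K[x]/(x^n)$ to show that any support $\tau$-tilting $M$ with $|M|=i$ has its annihilator idempotent equal to $e_{i+1}+\cdots+e_n$, so $M$ arises from some tilting $\Lambda$-module by inverting the iterated mutation.

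For (2), I would verify that the assignment $w\mapsto\mu_{i_1}\cdots\mu_{i_l}(\Lambda)$ is independent of the chosen reduced expression of $w$. By Matsumoto's theorem, it suffices to check the Coxeter braid relations at the level of iterated mutations applied to $\Lambda$: namely $\mu_i\mu_j(\Lambda)=\mu_j\mu_i(\Lambda)$ for $|i-j|\ge 2$ and $\mu_i\mu_{i+1}\mu_i(\Lambda)=\mu_{i+1}\mu_i\mu_{i+1}(\Lambda)$ along reduced words. For subwords lying in $\mathfrak{S}_n$ these already follow from Theorem \ref{1.1}; what is new is that the relations persist once $s_n$ appears, which amounts to an explicit calculation of mutation-exchange triangles at the summand corresponding to vertex $n$. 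Combined with (1), this yields the bijection $I:\mathfrak{S}_{n+1}\cong\sttilt\Lambda$, and hence $\#\sttilt\Lambda=(n+1)\cdot n!=(n+1)!$. For (3), I would verify that the left-order covering relations $w\lessdot s_iw$ in $\mathfrak{S}_{n+1}$ correspond under $I$ (with reversed direction) to mutation covering relations in the generation order on $\sttilt\Lambda$, extending the corresponding argument from Theorem \ref{1.1}.

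The main obstacle will be the braid relations in step (2), especially the order-$3$ relation $\mu_i\mu_{i+1}\mu_i=\mu_{i+1}\mu_i\mu_{i+1}$ applied to $\Lambda$: tracking how the ideal-product summands of Theorem \ref{1.1} evolve under mutation requires care to ensure that no collision with an already-killed projective summand breaks the identification. A secondary technical point is pinning down, in the proof of (1), that the successive projective summands removed by $\mu_n,\mu_{n-1}\mu_n,\ldots$ are exactly $P_n,P_{n-1},\ldots,P_{i+1}$ in this order, rather than some other permutation — a fact that relies on the explicit Loewy structure of $\Lambda$ described by the quiver and relations.
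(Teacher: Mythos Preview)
Your approach to (1) rests on a false premise: you assume that $\mu_n$ deletes the $n$-th summand of a tilting module $T$, so that $\mu_{[i+1,n]}(T)$ has exactly $i$ indecomposable summands. This is not what happens. The paper's Proposition~\ref{4.5} computes
\[
\mu_{[i+1,n]}(T)=T_1\oplus\cdots\oplus T_i\oplus\overline{T}_i\oplus\cdots\oplus\overline{T}_{n-1},
\]
where $\overline{T}_j:=T_j\otimes_\Lambda(\Lambda/\Lambda e_n\Lambda)$; these quotients are generally nonzero (see Example~\ref{4.6}, where $\mu_4(\Lambda)$ still has four nonzero summands). So the strata are \emph{not} distinguished by the number of summands, and the annihilator idempotent of a support $\tau$-tilting module with $|M|=i$ is \emph{not} forced to be $e_{i+1}+\cdots+e_n$ (already for $n=2$ both $S_1$ and $S_2$ occur). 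Both your disjointness and covering arguments collapse. The paper instead distinguishes the strata by which summands admit $S_n$ as a composition factor (Lemma~\ref{4.7}), and obtains the covering by showing that $\bigsqcup_{i}\mu_{[i+1,n]}(\tilt\Lambda)$ is closed under mutation (Proposition~\ref{4.9}(3)) and then invoking Proposition~\ref{2.22}.

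For (2), note that checking braid relations ``applied to $\Lambda$'' is not enough: Matsumoto moves occur in the middle of a reduced word, so one needs $\mu_j\mu_k(U)=\mu_k\mu_j(U)$ (and the length-$3$ analogue) for \emph{every} $U\in\sttilt\Lambda$, not just $U=\Lambda$. The paper establishes this in Lemma~\ref{4.b} by a case analysis built on the explicit formula of Proposition~\ref{4.5} and the commutation rules of Proposition~\ref{4.9}(3). Since your picture of what $\mu_{[i+1,n]}$ does is incorrect, you do not yet have the machinery to carry out this verification; the same applies to the covering-relation check for (3), which in the paper is Proposition~\ref{4.15}.
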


Now let $\Gamma$ be the preprojective algebra of Dynkin type $A_n$.
Then there exists a natural surjection $\Lambda\to\Gamma$, and we
get a tensor functor
$-\otimes_{\Lambda}\Gamma:\mod\Lambda\rightarrow\mod\Gamma$. By
using this we get a bijection between $\sttilt\Lambda$ and
$\sttilt\Gamma$. More precisely, we have:

\begin{theorem}[Theorem \ref{5.4}]\label{1.4}
Let $\Lambda$ and $\Gamma$ be as above. Then
\begin{enumerate}[\rm(1)]
\item The map $-\otimes_{\Lambda}\Gamma:\sttilt\Lambda\rightarrow\sttilt\Gamma$ given by $U\mapsto
U\otimes_{\Lambda}\Gamma$ is bijective.
\item The map in $(1)$ is an isomorphism of posets.
\end{enumerate}
\end{theorem}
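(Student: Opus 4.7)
The plan is to exploit the two parametrizations of $\sttilt\Lambda$ and $\sttilt\Gamma$ by the same symmetric group $\mathfrak{S}_{n+1}$: Theorem \ref{1.2}(2) supplies a bijection $I_\Lambda:\mathfrak{S}_{n+1}\cong\sttilt\Lambda$ via iterated mutations from $\Lambda$, and Mizuno \cite{M} supplies an entirely analogous bijection $I_\Gamma:\mathfrak{S}_{n+1}\cong\sttilt\Gamma$ via iterated mutations from $\Gamma$ (equivalently, via products of the two-sided ideals $\Gamma(1-e_i)\Gamma$). In particular both sides of (1) have cardinality $(n+1)!$. The goal is therefore to prove that the triangle
\[
\xymatrix{
&\mathfrak{S}_{n+1}\ar[dl]_{I_\Lambda}\ar[dr]^{I_\Gamma}&\\
\sttilt\Lambda\ar[rr]^{-\otimes_\Lambda\Gamma}&&\sttilt\Gamma
}
\]
commutes; statement (1) is then immediate from the bijectivity of $I_\Gamma$, and (2) follows because, by Theorem \ref{1.2}(3) together with Mizuno's theorem, both $I_\Lambda$ and $I_\Gamma$ are anti-isomorphisms of posets for the left order on $\mathfrak{S}_{n+1}$ and the generation order on $\sttilt$.

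I would first verify commutativity on the tilting locus $\tilt\Lambda\subset\sttilt\Lambda$, where Theorem \ref{1.1} writes every element as a product $I_{i_1}\cdots I_{i_l}$ of the maximal ideals of $\Lambda$. The base case is the identification, directly from the presentation given in Section 2, of the image of $I_i$ under $\Lambda\twoheadrightarrow\Gamma$ with Mizuno's corresponding ideal in $\Gamma$, together with a vanishing $\Tor_1^\Lambda(\Lambda/I_i,\Gamma)=0$ that ensures $I_i\otimes_\Lambda\Gamma$ computes to that same ideal. Induction on $l$ then yields $(I_{i_1}\cdots I_{i_l})\otimes_\Lambda\Gamma\cong I_\Gamma(w)$ for the reduced expression $w=s_{i_1}\cdots s_{i_l}$, provided a further $\Tor$-vanishing is propagated through the product; this homological bookkeeping, carried out using the explicit resolutions developed for $\Lambda$, is essentially the content of the proof on $\tilt\Lambda$.

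The extension to all of $\sttilt\Lambda$ uses Theorem \ref{1.2}(1), which realizes every element of $\sttilt\Lambda$ as $\mu_{i+1}\mu_{i+2}\cdots\mu_n(T)$ for some classical tilting module $T\in\tilt\Lambda$. The remaining step is to show that $-\otimes_\Lambda\Gamma$ intertwines the $\tau$-tilting mutations $\mu_i$ on the two sides, i.e.~that tensoring the exchange triangle that defines $\mu_i$ over $\Lambda$ with $\Gamma$ produces the exchange triangle that defines $\mu_i$ over $\Gamma$. I expect this mutation compatibility to be the main obstacle: it demands uniform control of $\Tor_1^\Lambda(-,\Gamma)$ on every indecomposable summand and cosyzygy that appears in the orbit of $\tilt\Lambda$ under these mutations, and this is where the specific structure of $\Lambda$ as the Auslander algebra of $K[x]/(x^n)$ together with the detailed description of $\sttilt\Lambda$ from Section 4 must really be brought to bear.
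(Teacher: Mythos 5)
Your architecture (factor $-\otimes_\Lambda\Gamma$ through the two $\mathfrak{S}_{n+1}$-parametrizations and check that the triangle commutes) is a genuinely different route from the paper's, and it would work if completed --- but the step you yourself flag as ``the main obstacle,'' namely that $-\otimes_\Lambda\Gamma$ intertwines the mutations $\mu_i$ on the two sides, is exactly the content of the theorem and is left unproven. The method you sketch for it does not obviously close: tensoring the exchange sequence $X\xrightarrow{f}V'\to Y\to 0$ with $\Gamma$ is right exact, but there is no reason offered why $X\otimes_\Lambda\Gamma\to V'\otimes_\Lambda\Gamma$ is again a \emph{minimal left approximation} over $\Gamma$; a map $X\otimes_\Lambda\Gamma\to V''\otimes_\Lambda\Gamma$ corresponds to a map $X\to V''/V''L$, which need not lift to $V''$, so surjectivity of the relevant $\Hom$-map is not formal, and no $\Tor$-vanishing is actually verified. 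There is also a latent circularity: the clean way to get mutation compatibility is from injectivity of $-\otimes_\Lambda\Gamma$ on $\sttilt\Lambda$ together with preservation of indecomposability (two support $\tau$-tilting modules sharing all but one summand must map to two distinct modules sharing all but one summand), and that injectivity is essentially what your commuting triangle is meant to deliver. Finally, the generator $s_n$ has no ideal-theoretic incarnation on the $\Lambda$ side ($I_n$ is not tilting), so the induction through products $I_{i_1}\cdots I_{i_l}$ only reaches $\mathfrak{S}_n$; the passage to $\mathfrak{S}_{n+1}$ is entirely carried by the unproven mutation step.

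The paper inverts the logic. It proves injectivity of $-\otimes_\Lambda\Gamma$ directly from the explicit classification of Theorem \ref{4.14}: every $U\in\sttilt\Lambda$ is $T_1\oplus\cdots\oplus T_i\oplus\overline{T}_i\oplus\cdots\oplus\overline{T}_{n-1}$, its image is computed to be $(T_1/L_1)\oplus\cdots\oplus(T_i/L_i)\oplus\overline{T}_i\oplus\cdots\oplus\overline{T}_{n-1}$, the index $i$ is recovered as the number of summands containing $S_n$ as a composition factor, and each $T_j$ is recovered from $T_j/L_j$ resp.\ $\overline{T}_j$ by Lemmas \ref{4.2}(3) and \ref{4.3}(4). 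Surjectivity is free from Proposition \ref{5.1}(3) since $\Lambda$ is $\tau$-rigid finite; mutation compatibility, hence the poset isomorphism, is then a \emph{consequence} of injectivity and indecomposability-preservation rather than an input, and Mizuno's theorem is recovered rather than assumed. If you do want to quote Mizuno, note that part (1) needs no commuting triangle at all: surjectivity plus $\#\sttilt\Lambda=(n+1)!=\#\sttilt\Gamma$ already gives bijectivity (this is the paper's own remark); but part (2) still requires the mutation or injectivity argument, since a bijection preserving the generation order need not have an order-preserving inverse.
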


As a corollary of Theorems \ref{1.2} and \ref{1.4}, we recover
Mizuno's anti-isomorphism $\mathfrak{S}_{n+1}\to\sttilt\Gamma$
\cite[Theorems 2.21 and 2.30]{M} since it is the composition of
$-\otimes_{\Lambda}\Gamma$ in Theorem \ref{1.4} and $I$ in Theorem
\ref{1.2}.

\begin{corollary}[Corollary \ref{5.6}]\label{1.5}
Let $\Lambda$ and $\Gamma$ be as above. There are isomorphisms
between the following posets:
\begin{enumerate}[\rm(1)]
\item The poset $\sttilt\Lambda$ with the generation order.
\item The poset $\sttilt\Gamma$ with the generation order.
\item The symmetric group $\mathfrak{S}_{n+1}$ with the opposite of the left order.
\item The poset $\sttilt(\Lambda^{\op})$ with the opposite of the generation order.
\item The poset $\sttilt(\Gamma^{\op})$ with the opposite of the generation order.
\item The symmetric group $\mathfrak{S}_{n+1}$ with the right order.
\end{enumerate}
\end{corollary}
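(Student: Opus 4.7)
The plan is to deduce Corollary~\ref{1.5} by chaining Theorems~\ref{1.2} and~\ref{1.4} with two standard ingredients: the $K$-duality for support $\tau$-tilting modules, and certain order-reversing involutions on the symmetric group.

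First I would handle the four module-theoretic posets. Theorem~\ref{1.4}(2) gives (1)$\cong$(2) directly. To relate $\Lambda$ to $\Lambda^{\op}$, I would invoke the standard fact (implicit in \cite{AIR}) that the $K$-duality $D=\Hom_K(-,K)$ induces an anti-isomorphism of posets $\sttilt\Lambda\to\sttilt\Lambda^{\op}$ with respect to the generation orders; this translates immediately into an isomorphism (1)$\cong$(4), and the analogous duality for $\Gamma$ yields (2)$\cong$(5).

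Next, Theorem~\ref{1.2}(3) supplies an anti-isomorphism $I\colon\mathfrak{S}_{n+1}(\leq_L)\to\sttilt\Lambda(\leq_{\mathrm{gen}})$. To upgrade this to a genuine isomorphism (3)$\cong$(1), I would precompose with the self-map $w\mapsto w_0w$, where $w_0$ is the longest element of $\mathfrak{S}_{n+1}$. A standard length computation shows $w\mapsto w_0w$ is an anti-automorphism of $(\mathfrak{S}_{n+1},\leq_L)$, so the composite $w\mapsto I(w_0w)$ is the desired isomorphism. To connect (3) with (6), I would verify that $\phi(w):=w_0w^{-1}$ is an isomorphism $(\mathfrak{S}_{n+1},\leq_L)\to(\mathfrak{S}_{n+1},\geq_R)$: if $w'=uw$ with $l(w')=l(u)+l(w)$, then a brief calculation gives $w_0w^{-1}=(w_0w'^{-1})u$ with lengths adding, which is precisely $\phi(w')\leq_R\phi(w)$.

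The main obstacle here is really bookkeeping rather than substance: one must keep precise track of which arrows are isomorphisms versus anti-isomorphisms as one moves between modules over $\Lambda$, modules over $\Gamma$, and elements of $\mathfrak{S}_{n+1}$, and choose the correct Coxeter-theoretic involution ($w\mapsto w_0w$, $w\mapsto w^{-1}$, or $w\mapsto w_0w^{-1}$) at each step so that orders flip consistently. All the genuinely new algebraic content is already contained in Theorems~\ref{1.2} and~\ref{1.4}.
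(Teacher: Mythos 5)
Your overall route is the same as the paper's: (1)$\cong$(2) from Theorem \ref{1.4}(2), (3)$\leftrightarrow$(1) from Theorem \ref{1.2}(3), the duality between $\sttilt$ of an algebra and of its opposite from \cite{AIR} for (1)$\cong$(4) and (2)$\cong$(5), and elementary Coxeter combinatorics for (6). You are in fact more careful than the paper on two points its proof glosses over: Theorem \ref{1.2}(3) produces an \emph{anti}-isomorphism $(\mathfrak{S}_{n+1},\le_L)\to\sttilt\Lambda$, so one genuinely needs to precompose with an order-reversing bijection such as $w\mapsto w_0w$ to obtain the asserted isomorphism (3)$\cong$(1); and item (6) requires a map such as your $\phi(w)=w_0w^{-1}$. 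Both of your length computations (that $w\mapsto w_0w$ reverses $\le_L$, and that $w'=uw$ with $l(w')=l(u)+l(w)$ gives $w_0w^{-1}=(w_0w'^{-1})u$ with lengths adding) are correct.

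The one genuine error is the identification of the order-reversing bijection $\sttilt\Lambda\to\sttilt(\Lambda^{\op})$ with the $K$-duality $D=\Hom_K(-,K)$. That functor does not send support $\tau$-tilting modules to support $\tau$-tilting modules: it exchanges $\tau$-rigidity with $\tau^{-1}$-rigidity and $\Fac$ with cogeneration, so already $D\Lambda$, the injective cogenerator of $\mod\Lambda^{\op}$, fails in general. Concretely, for the Auslander algebra of $K[x]/(x^2)$ the summand $DP_1$ of $D\Lambda$ is the indecomposable non-projective injective $\Lambda^{\op}$-module (the one with simple socle at vertex $1$ and top at vertex $2$), and this module is not $\tau$-rigid: by Lemma \ref{2.18} every indecomposable $\tau$-rigid module is a summand of some support $\tau$-tilting module, and this module occurs in none of the six modules listed in Example \ref{4.18}(1). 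The order-reversing bijection you need does exist and is exactly what the paper cites: the duality $(-)^{\dagger}$ of \cite[Theorem 2.14]{AIR}, obtained by applying $\Hom_\Lambda(-,\Lambda)$ to a two-term projective presentation (essentially $\Tr$ on the non-projective part together with bookkeeping of the support), not $D$. With that substitution your argument is complete and matches the paper's.
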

 The paper is organized as follows:
In Section 2, we recall some preliminaries on Auslander algebras,
tilting modules and support $\tau$-tilting modules. In
Section 3, we focus on the tilting modules over the Auslander
algebra of $K[x]/(x^n)$ and we prove Theorem \ref{1.1}. In
Section 4, we use Theorem \ref{1.1} and some other facts of tilting
modules to prove Theorem \ref{1.2}. Finally, in Section 5, we apply
Theorem \ref{1.2} and Theorem \ref{1.4} to preprojective algebras of
Dynkin type $A_n$ and get Mizuno's bijection for preprojective
algebras of Dynkin type $A_n$.

Throughout this paper, we denote by $K$ an arbitrary field, and we
consider basic finite dimensional $K$-algebras.
By a module, we mean a finitely generated right module.
For an algebra $A$, we denote by
 $\mod A$ the category of finitely generated right $A$-modules.
For an $A$-module $M$, we denote by $\add M$ the full
subcategory of $\mod A$ whose objects are direct summands of
$M^{n}$ for some $n > 0$. The composition of homomorphisms $f:X
\rightarrow Y$ and $g:Y \rightarrow Z$ is denoted by $gf:X
\rightarrow Z$.
Thus $\Hom_\Lambda(X,Y)$ is an $\End_\Lambda(Y)^{\rm op}$-module and
an $\End_\Lambda(X)$-module.

For more recent results on $\tau$-tilting theory of Auslander algebras, we refer to \cite{IZ,Z2}.

\bigskip
\noindent{\bf Acknowledgement} Theorem \ref{1.1} was obtained in the
Master thesis of Yusuke Tsujioka \cite{T}, who was a student of the
first author in Graduate school of Mathematics in Nagoya University.
The authors thank him for allowing them to include his results in
this paper. Other parts of this paper were done when the second
author visited Nagoya University in the year 2015. The second author
would like to thank Laurent Demonet, Takahide Adachi, Yuta Kimura,
Yuya Mizuno and Yingying Zhang for useful discussion and kind help.
He also wants to thank the first author for hospitality during his
stay in Nagoya. Both of the authors would like to thank the referees
for useful suggestions to improve this paper.

\section{Preliminaries}

In this section we recall some basic properties of Auslander
algebras, tilting modules and support $\tau$-tilting modules. We
begin with the definition of Auslander algebras.

For an algebra $\Lambda$ and a $\Lambda$-module $M$, denote by $\gldim\Lambda$ the global dimension of $\Lambda$, and by
$\projdim M$ (resp. $\injdim M$) the projective dimension
(resp. injective dimension) of $M$. We recall the following definition.

\begin{definition}
An algebra $\Lambda$ is called an {\it Auslander algebra}  if $\gldim  \Lambda\leq2$
and $E_i(\Lambda)$ is projective for $i=0,1$, where $E_i(\Lambda)$ is the $(i+1)$-th term in a minimal injective
resolution of $\Lambda$.
\end{definition}

Recall that an algebra $R$ is called {\it representation-finite} if
$\mod R$ admits an additive generator $M$, that is, $\mod R=\add M$.
The following classical result in \cite{AuRS} shows the relationship
between representation-finite algebras and Auslander algebras.

\begin{theorem}\label{2.2}
\begin{enumerate}[\rm(1)]
\item For an additive generator $M$ of the category
 $\mod R$ over a representation-finite algebra $R$, the algebra
 $\End_{R} (M )$ is an Auslander algebra.
 \item For an Auslander algebra $\Lambda$ and an
       additive generator $Q$ of the category of
       projective-injective $\Lambda$-module, the algebra
       $\End_{\Lambda} (Q)$ is
       representation-finite.
\item The correspondences in $(1)$ and $(2)$ induce mutually inverse bijections between Morita
       equivalence classes of representation-finite algebras and
       Morita equivalence classes of Auslander algebras.
\end{enumerate}
\end{theorem}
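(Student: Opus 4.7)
The plan is to use the representable functor $F=\Hom_R(M,-)\colon\mod R\to\mod\Lambda$ with $\Lambda:=\End_R(M)$ as the bridge between the two categories. By a Yoneda-type argument (together with $F(M)=\Lambda$), $F$ restricts to an equivalence $\add M\simeq\mathrm{proj}\,\Lambda$ of additive categories. This equivalence is the workhorse of the whole proof, and since $R$ is representation-finite we have $\mod R=\add M$, so every object of $\mod R$ sits in this equivalence.

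For part (1), I would first show $\gldim\Lambda\le 2$. Given $N\in\mod\Lambda$, take a projective presentation $P_1\xrightarrow{g}P_0\to N\to 0$. Under the equivalence, $g$ corresponds to a morphism $f\colon M_1\to M_0$ in $\add M$. Its kernel $L=\Ker f$ lies in $\mod R=\add M$, and left-exactness of $F$ identifies $F(L)$ with $\Ker g$; since $F(L)$ is projective, $\projdim N\le 2$. Next, to see that $E_0(\Lambda)$ and $E_1(\Lambda)$ are projective, start from a minimal injective copresentation $0\to M\to I_0\to I_1$ in $\mod R$ with $I_0,I_1$ injective (hence in $\add M$), and apply $F$ to obtain an exact sequence $0\to\Lambda\to F(I_0)\to F(I_1)$ of projective $\Lambda$-modules. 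The remaining point is that $F(I)$ is injective in $\mod\Lambda$ for $I$ injective in $\mod R$; I would verify this via the standard isomorphism $F(I)=\Hom_R(M,D(DI))\cong D(DI\otimes_R M)$, which exhibits $F(I)$ as the $K$-dual of a projective $\Lambda^{\op}$-module.

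For part (2), given an Auslander algebra $\Lambda$ with projective-injective additive generator $Q$, set $R=\End_\Lambda(Q)$. The dominant-dimension condition supplies an exact sequence $0\to\Lambda\to Q_0\to Q_1$ with $Q_i\in\add Q$, so $\Lambda$ embeds into $\mod R^{\op}$ via $G:=\Hom_\Lambda(-,Q)$. Using $\gldim\Lambda\le 2$ together with this copresentation, one shows $G$ is fully faithful (mimicking the argument for tilting modules: the copresentation implies $\Ext^i_\Lambda(-,Q)$-vanishing relations that upgrade $G$ to a full embedding on $\mod\Lambda^{\op}$). Its essential image is closed under summands and contains $G(\Lambda^{\op})=Q$, which as a right $R$-module is an additive generator of $\mod R$; hence every indecomposable $R$-module appears as a summand of $Q$ and $R$ is representation-finite.

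For part (3), the constructions are mutually quasi-inverse modulo Morita equivalence: starting from $R$, the algebra $\End_\Lambda(F(DR))$ is computed by the equivalence $\add M\simeq\mathrm{proj}\,\Lambda$ together with $DR$-duality to give $\End_R(DR)^{\op}\cong R$; conversely, starting from $\Lambda$ and forming $R=\End_\Lambda(Q)$, the additive generator $M$ recovered from $R$ produces $\End_R(M)\cong\Lambda$ by transporting the copresentation through $G$. I expect the most technical step to be the injectivity argument in (1) and, symmetrically, the identification of the essential image in (2); both require careful bimodule bookkeeping but are standard consequences of the equivalence $\add M\simeq\mathrm{proj}\,\Lambda$ combined with $K$-duality.
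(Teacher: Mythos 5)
The paper does not actually prove Theorem \ref{2.2}: it is quoted as a classical result from \cite{AuRS}, so there is no in-paper argument to compare against, and your proposal must stand on its own. Your part (1) is the standard proof and is essentially complete: the second-syzygy computation giving $\gldim\Lambda\le 2$ is correct, and the identification $\Hom_R(M,I)\cong D(M\otimes_R DI)$ does exhibit $F(I)$ as the dual of a projective $\Lambda^{\op}$-module --- though note that this last step silently uses that $M$ is a \emph{generator} ($R\in\add M$), which is what makes ${}_{\Lambda}M=\Hom_R(R,M)$ projective as a left $\Lambda$-module; without that, $F(I)$ need not be injective.

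Part (2) has a genuine gap. First, $G=\Hom_\Lambda(-,Q)$ is \emph{not} fully faithful on all of $\mod\Lambda$: it annihilates every module with no nonzero map to $Q$ (for the Auslander algebra of $K[x]/(x^n)$ one has $Q=P_n$ and $\Hom_\Lambda(S_1,P_n)=0$), so no $\Ext$-vanishing can upgrade it to a full embedding of $\mod\Lambda$; what the dominant dimension condition actually yields is the double centralizer property, i.e.\ that $G$ restricted to \emph{projective} $\Lambda$-modules is a duality onto $\add({}_{R}Q)$. Second, and more seriously, your assertion that $Q$ ``as a right $R$-module is an additive generator of $\mod R$'' is precisely the content of representation-finiteness and does not follow from anything preceding it: a fully faithful functor need not be dense, and knowing $Q$ lies in the essential image says nothing about an arbitrary $R$-module. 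The missing mechanism is the second-syzygy argument run on the other side: set $M:=\Hom_\Lambda(Q,\Lambda)$, verify that $M$ is a generator--cogenerator of $\mod R$ with $\End_R(M)\cong\Lambda$ (this is where $\mathrm{dom.dim}\,\Lambda\ge 2$ enters), take for $N\in\mod R$ an exact sequence $0\to N\to M_0\to M_1$ with $M_i\in\add M$, apply $\Hom_R(M,-)$ to realize $\Hom_R(M,N)$ as a second syzygy over $\Lambda$, hence projective by $\gldim\Lambda\le2$, and conclude $N\in\add M$ from the full faithfulness of $\Hom_R(M,-)$ (valid because $M$ is a generator). Part (3) inherits this gap, and additionally needs the (unaddressed but standard) identification of the projective-injective $\Lambda$-modules with exactly the modules $\Hom_R(M,I)$ for $I$ injective.
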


We call $\Lambda=\End_{R} (M)$ in Theorem \ref{2.2}(1) an
{\it Auslander algebra} of $R$. In this case, for $X \in \mod R$ we denote
\[P_{X} = \Hom_{R} (M,X),\ P^{X} = \Hom_{R} (X,M),\
S_{X} =P_{X} / \rad P_{X}\ \mbox{ and }\ S^{X} = P^{X} /\rad
P^{X}.\] Here $P_-=\Hom_R(M,-)$ is an equivalence between $\add M$ and
$\add\Lambda$, and $P^-=\Hom_R(-,M)$ is a duality between $\add M$ and
$\add\Lambda^{\rm op}$. The following statement \cite{AuRS} shows the
relationship between almost split sequences of $R$ and projective
resolutions of simple $\Lambda$-modules.

\begin{proposition}\label{2.3}
Let $\Lambda$ be an Auslander algebra of $R$ and let $X$ be an indecomposable $R$-module. Then we have
\begin{enumerate}[\rm(1)]
\item $\projdim(S_X)_{\Lambda} \leq 1$ if and only if $X$
 is a projective $R$-module. Then
$0 \rightarrow P_{\rad X} \rightarrow P_{X} \rightarrow S_{X}
 \rightarrow 0$
 is a minimal projective resolution of $S_{X}$.
\item $\projdim(S_{X})_{\Lambda} = 2$ if and only if
       $X$ is a nonprojective $R$-module. Then the almost split sequence
 $0 \rightarrow \tau X \rightarrow E \rightarrow X \rightarrow 0$ gives
      a minimal projective resolution $0 \rightarrow P_{\tau X} \rightarrow
 P_{E} \rightarrow P_{X} \rightarrow S_{X} \rightarrow 0$ of $S_{X}$.
\item $\projdim_{\Lambda}(S^{X}) \leq 1$ if and only if $X$
 is an injective $R$-module. Then
 $0 \rightarrow P^{X / \mathrm{soc}X} \rightarrow P^{X} \rightarrow S^{X}
 \rightarrow 0$ is a minimal projective resolution of $S^{X}$.
\item $\projdim_{\Lambda}(S^{X}) = 2$ if and only if
       $X$ is a noninjective $R$-module. Then the almost split sequence
 $0 \rightarrow X \rightarrow E \rightarrow \tau^{-1} X \rightarrow 0$
      gives a minimal projective resolution
 $0 \rightarrow P^{\tau^{-1} X} \rightarrow
 P^{E} \rightarrow P^{X} \rightarrow S^{X} \rightarrow 0$ of
 $S^{X}$.
 \end{enumerate}
\end{proposition}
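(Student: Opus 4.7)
My plan is to exploit the equivalence $P_- = \Hom_R(M,-)\colon \add M \xrightarrow{\sim} \mathsf{proj}\,\Lambda$ (and its contravariant counterpart $P^-$) to convert the almost split structure of $\mod R$ into projective resolutions of simple $\Lambda$-modules. The central technical ingredient I will need is the identification of $\rad P_X$, for $X$ indecomposable in $\add M$, as the image of $P_f\colon P_Y \to P_X$ whenever $f\colon Y \to X$ is a non-isomorphism through which every non-isomorphism $M_i \to X$ from an indecomposable summand $M_i$ of $M$ factors. This holds because $\rad P_X$ is spanned by precisely those non-isomorphisms. The dual identification of $\rad P^X$ will follow by the same argument applied to $P^-$.

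For (1), I take $X$ indecomposable projective and note that the inclusion $\rad X \hookrightarrow X$ has the required factoring property, since any non-iso $M_i \to X$ must land in $\rad X$. Applying the left-exact functor $P_-$ to $0 \to \rad X \to X$ and taking the cokernel produces $0 \to P_{\rad X} \to P_X \to S_X \to 0$, from which $\projdim S_X \leq 1$ is immediate. For (2), I replace $\rad X \hookrightarrow X$ by the right minimal almost split morphism $E \to X$ coming from $0 \to \tau X \to E \to X \to 0$; left exactness of $P_-$ combined with the image identification gives $0 \to P_{\tau X} \to P_E \to P_X \to S_X \to 0$, which is a minimal projective resolution showing $\projdim S_X = 2$ since $\tau X \neq 0$ forces $P_{\tau X} \neq 0$. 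The converse directions in (1) and (2) are then automatic because the two cases exhaust all indecomposable $X$ and yield complementary dimension bounds.

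Parts (3) and (4) I will prove by the formally dual argument, using the contravariant left-exact $P^-$ and the left-almost-split structure. The one point requiring a small extra argument is the verification, for $X$ indecomposable injective, that every non-iso $X \to M_i$ vanishes on $\soc X$: since $\soc X$ is simple and essential in $X$, any map which is nonzero on $\soc X$ must be injective on $X$, hence split (as $X$ is injective), hence an isomorphism of indecomposables. Granted this, $X \to X/\soc X$ plays the role of the radical inclusion in (1), and the left minimal almost split map $X \to E$ from the almost split sequence $0 \to X \to E \to \tau^{-1} X \to 0$ plays the role of $E \to X$ in (2), giving the desired projective resolutions of $S^X$.

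The main obstacle throughout is the identification of $\rad P_X$ and $\rad P^X$ with the appropriate images; once that is established, the remainder is a routine application of the exactness properties of $P_-$ and $P^-$, and the minimality of the resulting resolutions follows automatically from the right (respectively left) minimality built into the almost split hypothesis.
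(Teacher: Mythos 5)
Your proposal is correct: the paper states Proposition \ref{2.3} without proof, citing it as a classical fact from [AuRS], and your argument is exactly the standard one from that source --- identify $\rad P_X$ (resp.\ $\rad P^X$) with the radical morphisms $M\to X$ (resp.\ $X\to M$), realize it as the image under the left-exact functor $P_-$ (resp.\ $P^-$) of the radical inclusion or the right (resp.\ left) minimal almost split map, and read off the minimal projective resolution, with the two converse implications following because the projective/nonprojective (injective/noninjective) cases are exhaustive and give complementary bounds. All the auxiliary points you isolate (a non-isomorphism onto an indecomposable projective lands in its radical; a map from an indecomposable injective that is nonzero on the simple essential socle is a split monomorphism, hence an isomorphism; minimality of the resolution via minimality of the almost split maps) are verified correctly.
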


 Denote by $(- )^{\ast}=\rm
\Hom_{\Lambda}(-,\Lambda)$. We also need the following lemma.

\begin{lemma}\label{2.4}
Let $\Lambda$ be an Auslander algebra of $R$ and let $X$ be an indecomposable
nonprojective $R$-module. Then we have
\begin{enumerate}[\rm(1)]
\item $\Ext_{\Lambda}^{2} ( S_{X}, \Lambda )\cong S^{\tau X}$, and
$\Ext_{\Lambda}^{i} ( S_{X}, \Lambda )=0$ if $i\neq  2$.
\item $\Ext_{\Lambda}^{i}(S_{X} , Y) \cong \Tor_{2-i}^{\Lambda}(Y,S^{\tau X})$ for
$Y \in\mod \Lambda$ and $i\in\mathbb{Z}$.
\end{enumerate}
\end{lemma}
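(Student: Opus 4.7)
The plan is to leverage the minimal projective resolution of $S_X$ provided by Proposition~\ref{2.3}(2) and dualize it; part (1) is then a direct cohomology calculation, and part (2) follows by rewriting the Ext-complex via the tensor--hom identity for finitely generated projectives. Since $X$ is indecomposable non-projective, Proposition~\ref{2.3}(2) gives a minimal projective resolution
\[
0 \to P_{\tau X} \to P_E \to P_X \to S_X \to 0
\]
arising from the almost split sequence $0 \to \tau X \to E \to X \to 0$; in particular $\projdim S_X = 2$.

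For (1), I would apply $(-)^* = \Hom_\Lambda(-,\Lambda)$ to the truncated resolution. The equivalence $P_- \colon \add M \to \add \Lambda$ combined with the duality $(-)^* \colon \add \Lambda \to \add \Lambda^{\op}$ yields the natural identification $P_Z^* \cong P^Z$ for $Z \in \add M$, so the dual complex is
\[
0 \to P^X \to P^E \to P^{\tau X} \to 0.
\]
Since $X$ is non-projective, $\tau X$ is non-injective, and so Proposition~\ref{2.3}(4), applied to $\tau X$ (using $\tau^{-1}(\tau X) = X$ and the same middle term $E$), asserts that the extended complex $0 \to P^X \to P^E \to P^{\tau X} \to S^{\tau X} \to 0$ is a minimal projective resolution of $S^{\tau X}$. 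Consequently the cohomology of the dual complex is $S^{\tau X}$ in degree $2$ and zero elsewhere, establishing (1).

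For (2), I would apply the natural isomorphism $\Hom_\Lambda(P, Y) \cong Y \otimes_\Lambda \Hom_\Lambda(P, \Lambda)$, valid for every finitely generated projective right $\Lambda$-module $P$, termwise to the resolution of $S_X$. The cochain complex computing $\Ext^\bullet_\Lambda(S_X, Y)$ becomes
\[
0 \to Y \otimes_\Lambda P^X \to Y \otimes_\Lambda P^E \to Y \otimes_\Lambda P^{\tau X} \to 0,
\]
which, after reindexing $j = 2-i$, coincides with the chain complex $Y \otimes_\Lambda Q_\bullet$ obtained by tensoring $Y$ with the truncated projective resolution $Q_\bullet$ of $S^{\tau X}$ produced in (1). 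Taking (co)homology then yields $\Ext^i_\Lambda(S_X, Y) \cong \Tor^\Lambda_{2-i}(Y, S^{\tau X})$ for $i \in \{0,1,2\}$, and both sides vanish for $i \notin [0,2]$ because $\projdim S_X = \projdim S^{\tau X} = 2$. The one non-routine point throughout is bookkeeping the left/right module structures and the index reversal under duality; once the isomorphism $P_Z^* \cong P^Z$ is in place, everything else is a matter of matching indices.
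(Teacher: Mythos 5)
Your proposal is correct and follows essentially the same route as the paper: both arguments hinge on dualizing the projective resolution of $S_X$ coming from the almost split sequence, identifying $P_Z^{\ast}\cong P^Z$ so that the dual complex is the (truncated) projective resolution of $S^{\tau X}$ from Proposition~\ref{2.3}(4), and then using $\Hom_\Lambda(P,Y)\cong Y\otimes_\Lambda P^{\ast}$ termwise to match the Ext- and Tor-complexes. The only cosmetic difference is that the paper proves (2) first and obtains (1) as the special case $Y=\Lambda$, whereas you prove (1) directly by the same computation.
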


\begin{proof}
We only prove (2) since the statement (1) follows from (2) immediately.

By Proposition \ref{2.3}, there exist projective resolutions
\begin{eqnarray}\label{2.4.1}
&0 \rightarrow P_{\tau X} \rightarrow P_{E} \rightarrow P_{X}
 \rightarrow S_{X} \rightarrow 0,&\\ \label{2.4.2}
&0 \rightarrow P^{X} \rightarrow P^{E} \rightarrow P^{\tau X}
 \rightarrow S^{\tau X} \rightarrow 0.&
 \end{eqnarray}
of $S_{X}$ and $S^{\tau X}$, respectively. Applying
$\Hom_{\Lambda} (-,Y)$ to \eqref{2.4.1}, we obtain a complex
\begin{equation}\label{2.4.3}
0 \rightarrow \Hom_{\Lambda} (P_{X}, Y)
\rightarrow \Hom_{\Lambda} (P_{E}, Y)
\rightarrow \Hom_{\Lambda} (P_{\tau X}, Y )
\rightarrow 0
\end{equation}
whose homologies are $\Ext_{\Lambda}^{i} (S_{X},Y)$. Similarly, applying
$Y \otimes_{\Lambda} -$ to \eqref{2.4.2}, we obtain a complex
\begin{equation}\label{2.4.4}
0 \rightarrow Y \otimes_{\Lambda} P^{X} \rightarrow Y
\otimes_{\Lambda} P^{E} \rightarrow Y \otimes_{\Lambda} P^{\tau X}
\rightarrow 0
\end{equation}
whose homologies are $\Tor_{2-i}^{\Lambda}(Y,S^{\tau X})$. Because
$\Hom_{\Lambda} (P_{-},Y) \cong Y \otimes_{\Lambda}
{P_{-}}^{\ast}\cong Y \otimes_{\Lambda} P^{-}$ holds, \eqref{2.4.3}
and \eqref{2.4.4} are isomorphic. Thus we obtain the desired
isomorphism.
\end{proof}

The following lemma is useful.

\begin{lemma}\label{2.5} Let $\Lambda$ be an Auslander algebra
and $Y\in \mod \Lambda$. Then any composition factor of $\Ext_{\Lambda}^{2} (Y, \Lambda)$ has
projective dimension $2$.
\end{lemma}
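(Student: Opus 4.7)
The plan is to reduce to the case of simple modules via a standard dévissage, and then invoke Lemma \ref{2.4}(1) and Proposition \ref{2.3}(4).

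First I would argue by induction on the composition length of $X$. Given a short exact sequence $0\to X'\to X\to X''\to 0$ in $\mod\Lambda$, the long exact sequence of $\Ext$ combined with $\gldim\Lambda\le 2$ (so that $\Ext_\Lambda^{3}(X'',\Lambda)=0$) yields an exact sequence
\[
\Ext_\Lambda^{2}(X'',\Lambda)\longrightarrow\Ext_\Lambda^{2}(X,\Lambda)\longrightarrow\Ext_\Lambda^{2}(X',\Lambda)\longrightarrow 0.
\]
Therefore every composition factor of $\Ext_\Lambda^{2}(X,\Lambda)$ is a composition factor of $\Ext_\Lambda^{2}(X',\Lambda)$ or of $\Ext_\Lambda^{2}(X'',\Lambda)$. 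Iterating along a composition series of $X$, the problem reduces to showing that for every simple $\Lambda$-module $S$, every composition factor of $\Ext_\Lambda^{2}(S,\Lambda)$ has projective dimension $2$.

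Next I would invoke Lemma \ref{2.4}(1). Writing $S=S_Y$ for a unique indecomposable $R$-module $Y$, one has $\Ext_\Lambda^{2}(S_Y,\Lambda)=0$ whenever $Y$ is projective (there is nothing to prove in that case), and $\Ext_\Lambda^{2}(S_Y,\Lambda)\cong S^{\tau Y}$ whenever $Y$ is indecomposable non-projective. In the latter case $\tau Y$ is a non-injective indecomposable $R$-module, since the Auslander--Reiten translation restricts to a bijection between non-projective and non-injective indecomposables. Applying Proposition \ref{2.3}(4) to $\tau Y$, we conclude $\projdim S^{\tau Y}=2$, which finishes the reduction.

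There is essentially no obstacle: once one correctly identifies $\Ext_\Lambda^{2}(X,\Lambda)$ as a right $\Lambda^{\op}$-module whose simple factors are of the form $S^{Z}$ for indecomposable $R$-modules $Z$, the projective dimension statement is immediate from Proposition \ref{2.3}(4). The only point to be careful about is verifying that the simples appearing under the dévissage are indeed those described by Lemma \ref{2.4}(1), i.e.\ that they lie in the image of $\tau$ and hence correspond to non-injective indecomposables; this is exactly the content of the standard property of $\tau$ recalled above.
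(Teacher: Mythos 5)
Your proposal is correct and follows essentially the same route as the paper: induction (dévissage) on the composition length via the long exact sequence of $\Ext_\Lambda^{\bullet}(-,\Lambda)$, reducing to simple modules, where Lemma \ref{2.4}(1) identifies $\Ext_\Lambda^{2}(S_Y,\Lambda)$ as $S^{\tau Y}$ and Proposition \ref{2.3}(4) gives projective dimension $2$. You in fact spell out the base case more carefully than the paper does, in particular the point that $\tau Y$ is non-injective so that Proposition \ref{2.3}(4) applies.
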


\begin{proof}
Without loss of generality, we can assume that $Y$ is simple since any composition factor of $\Ext^2_\Lambda(Y,\Lambda)$ is a composition factor of $\Ext^2_\Lambda(S,\Lambda)$ for some simple $\Lambda$-module $S$.
If $\projdim Y\le 1$, then the assertion is clear since the zero module has no composition factor.
If $\projdim Y=2$, then Proposition \ref{2.3}(2) shows that $Y=S_X$ for some indecomposable nonprojective $R$-module $X$.
Thus $\Ext^2_\Lambda(Y,\Lambda)=S^{\tau X}$ holds by Lemma \ref{2.4}(2), and the assertion follows from Proposition \ref{2.3}(4).
\end{proof}

We also need the following general result on algebras of global
dimension 2.

\begin{lemma}\label{2.6}
Let $\Lambda$ be an algebra with $\gldim  \Lambda \leq 2$ and $Y \in
\mod \Lambda$. Then $Y^{\ast \ast}$ is a projective
$\Lambda$-module.
\end{lemma}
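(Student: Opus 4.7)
The plan is to reduce the projectivity of $X^{\ast\ast}$ to the projectivity of $X^{\ast}$ via the transpose construction, and to extract the latter from the global dimension bound on the opposite algebra.

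First I would choose any projective presentation $P_{1} \xrightarrow{f} P_{0} \to X \to 0$ in $\mod \Lambda$. Applying $(-)^{\ast}=\Hom_{\Lambda}(-,\Lambda)$ produces an exact sequence of left $\Lambda$-modules
$$0 \to X^{\ast} \to P_{0}^{\ast} \xrightarrow{f^{\ast}} P_{1}^{\ast} \to \Tr X \to 0,$$
where $\Tr X := \Coker(f^{\ast})$ is the Auslander--Bridger transpose of $X$. The right half $P_{0}^{\ast} \to P_{1}^{\ast} \to \Tr X \to 0$ is visibly a projective presentation of $\Tr X$ as a left $\Lambda$-module, and $X^{\ast}$ is exactly the kernel of $f^{\ast}$.

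Next I would invoke the left-right symmetry $\gldim \Lambda^{\op} = \gldim \Lambda \le 2$ (valid for finite-dimensional algebras), so that $\projdim_{\Lambda^{\op}} \Tr X \le 2$. Splitting the displayed four-term sequence into two short exact sequences
$$0 \to X^{\ast} \to P_{0}^{\ast} \to \Im f^{\ast} \to 0 \quad \text{and} \quad 0 \to \Im f^{\ast} \to P_{1}^{\ast} \to \Tr X \to 0,$$
the second one gives $\projdim \Im f^{\ast} \le 1$, and then the first one combined with Schanuel's lemma against a projective resolution $0 \to Q_{1} \to Q_{0} \to \Im f^{\ast} \to 0$ forces $X^{\ast}$ to be a summand of a projective, hence itself projective.

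Finally I would conclude by observing that $(-)^{\ast}$ sends projective modules to projective modules on both sides, since it is additive and satisfies $\Lambda^{\ast} = \Lambda$. Thus $X^{\ast \ast} = (X^{\ast})^{\ast}$ is a projective right $\Lambda$-module. The only step requiring any care is the passage from $\projdim \Tr X \le 2$ to the projectivity of the specific syzygy $X^{\ast}$ in our (possibly non-minimal) resolution, but Schanuel's lemma handles this cleanly; the rest is formal manipulation with the duality.
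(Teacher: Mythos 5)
Your proof is correct and follows essentially the same route as the paper: both realize $X^{\ast}$ as the kernel of $f^{\ast}:P_0^{\ast}\to P_1^{\ast}$, i.e.\ as a second syzygy over $\Lambda^{\op}$, and deduce its projectivity from $\gldim\Lambda^{\op}\le 2$, then apply $(-)^{\ast}$ once more. The paper states this more tersely; your splitting into two short exact sequences and the appeal to Schanuel's lemma merely fill in the details the paper leaves implicit.
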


\begin{proof}
Let $Q_{1} \rightarrow Q_{0} \rightarrow Y  \rightarrow 0$ be a
projective presentation of $Y$. Applying $(- )^{\ast}$, we obtain an
exact sequence
 $ 0 \rightarrow Y^{\ast} \rightarrow Q_{0}^{\ast}
  \rightarrow Q_{1}^{\ast}$.
Hence $Y^{\ast}$ is a projective $\Lambda^{\op}$-module, since
 $Q_{0}^{\ast}$ and $Q_{1}^{\ast}$ are projective $\Lambda^{\op}$-modules
 and $\gldim  \Lambda \leq 2$.
Thus $Y^{\ast \ast}$ is a projective $\Lambda$-module.
\end{proof}

By the lemma above we obtain the following.

\begin{lemma}\label{2.7}
Let $\Lambda$ be an Auslander algebra, and let $Y$ be a
$\Lambda$-module with $\projdim Y\le 1$. Then the evaluation map
$\varphi_Y:Y \to Y^{\ast \ast}$ is injective, and the projective
dimension of any composition factor of $Y^{\ast \ast} / Y$ is $2$.
\end{lemma}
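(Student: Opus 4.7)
The plan is to establish the Auslander-Bridger exact sequence
\[
0\to\Ext^1_{\Lambda^{\op}}(\Tr X,\Lambda)\to X\xrightarrow{\varphi_X}X^{\ast\ast}\to\Ext^2_{\Lambda^{\op}}(\Tr X,\Lambda)\to 0
\]
and then to deduce both assertions by applying Lemma \ref{2.5} to $\Lambda^{\op}$, which is itself an Auslander algebra (being the Auslander algebra of $R^{\op}$).

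First I take a minimal projective resolution $0\to Q_1\to Q_0\to X\to 0$ and apply $(-)^{\ast}$, obtaining the four-term exact sequence $0\to X^{\ast}\to Q_0^{\ast}\to Q_1^{\ast}\to\Tr X\to 0$. Since $X^{\ast}$ is a second syzygy of $\Tr X$ in $\mod\Lambda^{\op}$ and $\gldim\Lambda^{\op}\leq 2$, it is projective, so this four-term sequence is in fact a projective resolution of $\Tr X$. A second application of $(-)^{\ast}$, combined with the canonical isomorphisms $Q_i^{\ast\ast}\cong Q_i$ and a diagram chase, identifies the cohomologies of the complex $Q_1^{\ast\ast}\to Q_0^{\ast\ast}\to X^{\ast\ast}$ with $\Ker\varphi_X\cong\Ext^1_{\Lambda^{\op}}(\Tr X,\Lambda)$ and $\Coker\varphi_X\cong\Ext^2_{\Lambda^{\op}}(\Tr X,\Lambda)$, yielding the displayed sequence.

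The assertion on composition factors of $X^{\ast\ast}/X\cong\Ext^2_{\Lambda^{\op}}(\Tr X,\Lambda)$ then follows at once from Lemma \ref{2.5} applied to $\Lambda^{\op}$ and $\Tr X$.

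The principal obstacle is showing that $K:=\Ker\varphi_X=\Ext^1_{\Lambda^{\op}}(\Tr X,\Lambda)$ vanishes, and the plan has two ingredients. First, by a length induction on $M\in\mod\Lambda^{\op}$ using the long exact sequence of $\Ext_{\Lambda^{\op}}(-,\Lambda)$, together with Lemma \ref{2.4}(2) (which forces $\Ext^1_{\Lambda^{\op}}(S,\Lambda)=0$ for simple $S$ of projective dimension $2$) and a direct Auslander-Reiten computation based on Proposition \ref{2.3}(3) for simple $S$ of projective dimension $1$, every composition factor of $K$ has projective dimension $2$ as a $\Lambda$-module. Second, from $K\subseteq X$ and the standard estimate $\projdim K\leq\max(\projdim X,\projdim(X/K)-1)\leq\max(1,1)=1$, we have $\Ext^2_{\Lambda}(K,\Lambda)=0$. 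If $K$ were nonzero, a simple submodule $S\hookrightarrow K$ would satisfy $\projdim S=2$, but the long exact sequence associated to $0\to S\to K\to K/S\to 0$, together with $\Ext^2_{\Lambda}(K,\Lambda)=0$ and $\Ext^3_{\Lambda}(K/S,\Lambda)=0$, would force $\Ext^2_{\Lambda}(S,\Lambda)=0$, contradicting Lemma \ref{2.4}(1). Hence $K=0$ and $\varphi_X$ is injective.
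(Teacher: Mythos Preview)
Your proof is correct, and it shares the same global architecture as the paper's: both start from the Auslander--Bridger sequence
\[
0\to\Ext^1_{\Lambda^{\op}}(\Tr X,\Lambda)\to X\xrightarrow{\varphi_X}X^{\ast\ast}\to\Ext^2_{\Lambda^{\op}}(\Tr X,\Lambda)\to 0,
\]
and both deduce the statement about $X^{\ast\ast}/X$ from Lemma~\ref{2.5} applied to $\Lambda^{\op}$.

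Where you diverge is in showing $K:=\Ext^1_{\Lambda^{\op}}(\Tr X,\Lambda)=0$. The paper argues that every composition factor of the $\Lambda^{\op}$-module $\Tr X$ itself has projective dimension $2$: writing the minimal resolution $0\to P_{X_1}\to P_{X_0}\to X\to 0$, the hypothesis $\projdim X\le 1$ forces the underlying $R$-map $X_1\to X_0$ to be a monomorphism (since $M$ is a generator), and then for any injective $R$-module $I$ one gets that $\Hom_R(X_0,I)\to\Hom_R(X_1,I)$ is surjective, i.e.\ $\Hom_{\Lambda^{\op}}(P^I,\Tr X)=0$. From this the vanishing of $K$ is immediate via (the $\Lambda^{\op}$-version of) Lemma~\ref{2.4}.

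Your route instead proves the auxiliary fact that, for \emph{every} $M\in\mod\Lambda^{\op}$, all composition factors of $\Ext^1_{\Lambda^{\op}}(M,\Lambda)$ have projective dimension~$2$ over $\Lambda$, and then combines this with the bound $\projdim_\Lambda K\le 1$ (coming from $K\subseteq X$) to force $K=0$. This is valid, but more circuitous: your base case for simples of projective dimension~$\le 1$---the ``direct Auslander--Reiten computation''---unpacks to essentially the dual of the paper's key step (one uses that $\Hom_R(Y,I)\to\Hom_R(Y,I/\soc I)$ is surjective for $Y$ projective, whereas the paper uses that $\Hom_R(X_0,I)\to\Hom_R(X_1,I)$ is surjective for $I$ injective). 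So you end up doing comparable work in a less direct location. The paper's approach uses the hypothesis $\projdim X\le 1$ once, early, to control $\Tr X$; yours defers it to the inequality $\projdim K\le 1$ and pays for this with an extra induction and a base-case computation that you have only sketched. Both are fine; the paper's is shorter.
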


\begin{proof} By \cite{AuB}, we get an exact sequence
$0\rightarrow\Ext_{\Lambda^{\op}}^{1} ( \Tr Y , \Lambda)\rightarrow
Y\rightarrow Y^{\ast\ast}\rightarrow\Ext_{\Lambda^{\op}}^{2} (\Tr Y,
\Lambda)\rightarrow 0$. Then the latter assertion holds by Lemma
\ref{2.5}. We prove the former one in two steps.

(1) We show that the projective dimension of any
composition factor of $\Tr Y$ is $2$.

It suffices to show that
$\Hom_{\Lambda^{\op}}(P,\Tr Y)=0$ holds for the projective cover $P$
of any simple $\Lambda^{ \op}$-module $S$ with $\projdim S\leq1$. By
Proposition \ref{2.3}(3), $P=P^I$ for some injective $R$-module $I$.
On one hand, take a minimal projective resolution of $Y$:
\begin{equation}\label{2.7.1}
0\rightarrow P_{X_1}\stackrel{P_f}{\rightarrow} P_{X_0}\rightarrow
Y\rightarrow0
\end{equation}
Since $M$ is a generator, then we get an $R$-module monomorphism
$f:X_1\rightarrow X_0$. Applying $\Hom_{R}(-,I)$, one has an
epimorphism
\begin{equation}\label{(2.6)}
\Hom_{R}(X_0,I)\rightarrow \Hom_{R}(X_1,I).
\end{equation}
On the other hand, applying the functor $(-)^{\ast}$ to \eqref{2.7.1}, we
get an exact sequence $P^{X_0}\rightarrow P^{X_1}\rightarrow \Tr
Y\rightarrow 0$. Then applying the functor
$\Hom_{\Lambda^{\op}}(P^I,-)$, one obtains an exact sequence
\begin{equation*}
\Hom_{\Lambda^{\op}}(P^I,P^{X_0})\rightarrow
\Hom_{\Lambda^{\op}}(P^I,P^{X_1})\rightarrow
\Hom_{\Lambda^{\op}}(P^I,\Tr Y)\rightarrow 0
\end{equation*}
This can be rewritten as $\Hom_{R}(X_0,I)\rightarrow
\Hom_{R}(X_1,I)\rightarrow \Hom_{\Lambda^{\rm op}}(P^I,\Tr
Y)\rightarrow 0$. Thus we obtain $\Hom_{\Lambda^{\rm op}}(P^I,\Tr Y)=0$ by
\eqref{(2.6)}.

(2) Now we prove the assertion.
By (1) and Proposition \ref{2.3}(4), any composition factor of $\Tr Y$ has the form $S^X$ for some indecomposable noninjective $R$-module $X$.
By the dual of Lemma \ref{2.4}(1), we have $\Ext^1_{\Lambda^{\rm op}}(S^X,\Lambda)=0$. Thus $\Ext^1_{\Lambda^{\rm op}}(\Tr Y,\Lambda)=0$.
\end{proof}

In the rest of this section, $\Lambda$ is an arbitrary algebra. In
the following we recall some basic properties of tilting modules. We
begin with the definition of tilting modules.
\begin{definition}\label{2.8}
We call $T \in \mod \Lambda$ a {\it tilting module} if $T$ satisfies
the following conditions
\begin{enumerate}
\item[(T1)] $\projdim T \leq 1$.
\item[(T2)] $\Ext_{\Lambda}^{1}(T,T) = 0$.
\item[(T3)] There exists a short exact sequence
$0\rightarrow \Lambda \rightarrow T_{0} \rightarrow T_{1}
  \rightarrow 0$ with $T_{0}, T_{1} \in \add  T$.
\end{enumerate}
The condition (T3) is equivalent to
\begin{enumerate}
\item[(T3')] The number of non-isomorphic direct summands of
$T$ is equal to that of $\Lambda$.
\end{enumerate}
\end{definition}

Now let us recall some general properties of tilting modules
\cite{HaU}.

\begin{lemma}\label{2.9}
Let $T$ be a tilting $\Lambda$-module, and let
 $0 \rightarrow Q_{1} \rightarrow Q_{0} \rightarrow T
 \rightarrow 0$ be a minimal projective resolution of $T$. Then we have the
following:
\begin{enumerate}[\rm(1)]
\item $(\add  Q_{1}) \cap (\add  Q_{0}) = 0$ and
 $\add  (Q_{0} \oplus Q_{1}) = \add \Lambda$ hold.
\item For a simple $\Lambda$-module $S$, precisely one of
       $\Hom_{\Lambda} (T,S) = 0$ and
 $\Ext_{\Lambda}^{1} (T, S) = 0$ holds.
\item For a simple $\Lambda^{\op}$-module $S$, precisely one of $T
      \otimes_{\Lambda} S = 0$ and $\Tor^{\Lambda}_{1} (T,
      S) = 0$ holds.
\end{enumerate}
\end{lemma}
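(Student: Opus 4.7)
The plan is to establish part (1) first and then derive (2) and (3) as direct consequences. For (1), I would combine both forms of the tilting axiom. Applying $\Hom_\Lambda(-, S)$ for a simple right $\Lambda$-module $S$ to the short exact sequence $0 \to \Lambda \to T_0 \to T_1 \to 0$ from (T3) and using $\Ext^1_\Lambda(T, T) = 0$ gives an exact sequence whose middle term $\Hom_\Lambda(\Lambda, S) \cong S$ is nonzero; this forces at least one of $\Hom_\Lambda(T, S)$ or $\Ext^1_\Lambda(T, S)$ to be nonzero for every simple $S$, which (via the reduction below) translates into $\add(Q_0 \oplus Q_1) \supseteq \add \Lambda$, hence equality. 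For the disjointness $(\add Q_0) \cap (\add Q_1) = 0$, I would use a rank argument in the Grothendieck group: since $\projdim T \leq 1$, we have $[T] = [Q_0] - [Q_1]$ in $K_0(\mod \Lambda)$, and the equivalent form (T3') guarantees that $T$ has exactly $n$ non-isomorphic indecomposable summands, where $n$ is the number of simple $\Lambda$-modules. Combined with the covering statement above, this provides the counting needed to force each indecomposable projective to appear in exactly one of $\add Q_0$ or $\add Q_1$.

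For (2) and (3), the crucial observation is that minimality of $0 \to Q_1 \to Q_0 \to T \to 0$ means the differential $d: Q_1 \to Q_0$ has image in $\rad Q_0 = Q_0 \cdot \rad \Lambda$. Consequently, applying $\Hom_\Lambda(-, S)$ for a simple right $\Lambda$-module $S$ kills the differential at the Hom-level, since any map $Q_0 \to S$ annihilates $\rad Q_0$; hence the long exact sequence collapses to
$$\Hom_\Lambda(T, S) \cong \Hom_\Lambda(Q_0, S), \qquad \Ext^1_\Lambda(T, S) \cong \Hom_\Lambda(Q_1, S).$$
Symmetrically, for a simple left $\Lambda$-module $S$ we have $\rad \Lambda \cdot S = 0$, so $d \otimes_\Lambda S = 0$, which gives
$$T \otimes_\Lambda S \cong Q_0 \otimes_\Lambda S, \qquad \Tor^\Lambda_1(T, S) \cong Q_1 \otimes_\Lambda S.$$
For basic $\Lambda$, each of these four spaces is nonzero if and only if the indecomposable projective corresponding to $S$ occurs as a direct summand of $Q_0$ or $Q_1$ respectively. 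Part (1) then immediately yields (2) and (3).

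The main obstacle is the disjointness clause of (1); the covering clause falls out cleanly from the long exact sequence, but disjointness requires invoking the exact equality in (T3'), matched against a Grothendieck-group count via $[T] = [Q_0] - [Q_1]$. Once (1) is in hand, (2) and (3) are essentially mechanical: the minimal differential dies after being hit by $\Hom_\Lambda(-, S)$ or $-\otimes_\Lambda S$, and the semisimple behavior of simple modules over basic algebras does the rest.
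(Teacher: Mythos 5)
The paper itself states this lemma without proof, citing Happel--Unger, so I am judging your argument on its own terms. Your reduction of (2) and (3) to (1) is correct and is the right idea: minimality gives $\Im(Q_1\to Q_0)\subseteq\rad Q_0$, so the induced maps $\Hom_\Lambda(Q_0,S)\to\Hom_\Lambda(Q_1,S)$ and $Q_1\otimes_\Lambda S\to Q_0\otimes_\Lambda S$ vanish for $S$ simple, yielding $\Hom_\Lambda(T,S)\cong\Hom_\Lambda(Q_0,S)$, $\Ext^1_\Lambda(T,S)\cong\Hom_\Lambda(Q_1,S)$ and the tensor analogues. The covering half of (1) is also fine: applying $\Hom_\Lambda(-,S)$ to $0\to\Lambda\to T_0\to T_1\to 0$ shows at least one of $\Hom_\Lambda(T,S)$, $\Ext^1_\Lambda(T,S)$ is nonzero, hence every indecomposable projective occurs in $Q_0$ or $Q_1$. (The invocation of $\Ext^1_\Lambda(T,T)=0$ at that point is not actually needed.)

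The genuine gap is the disjointness $(\add Q_0)\cap(\add Q_1)=0$, and the Grothendieck-group count cannot close it, because $K_0$ only sees the difference $[Q_0]-[Q_1]$ and your argument never uses $\Ext^1_\Lambda(T,T)=0$. Concretely, over the path algebra of $1\to 2$ the module $M=S_1\oplus P_2$ satisfies every hypothesis you invoke: $\projdim M\le 1$, exactly $n=2$ indecomposable summands whose classes $[P_1]-[P_2]$ and $[P_2]$ are linearly independent, minimal presentations, and $\add(Q_0\oplus Q_1)=\add(P_1\oplus P_2\oplus P_2)=\add\Lambda$; yet $P_2$ is a summand of both $Q_0$ and $Q_1$. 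Of course $M$ is not rigid ($\Ext^1(S_1,P_2)\ne 0$), which is exactly the hypothesis your counting ignores. The standard proof of disjointness runs through rigidity: applying $\Hom_\Lambda(-,T)$ to the resolution and using $\Ext^1_\Lambda(T,T)=0$ shows that $\Hom_\Lambda(Q_0,T)\to\Hom_\Lambda(Q_1,T)$ is surjective; if $P$ were a common indecomposable summand, one lifts the map $Q_1\twoheadrightarrow P\hookrightarrow Q_0\to T$ through $Q_0\to T$ and deduces that the inclusion $P\hookrightarrow Q_0$ factors modulo $\Im(Q_1\to Q_0)\subseteq\rad Q_0$ in a way that contradicts minimality. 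Some argument of this kind (or an appeal to the corresponding statement for $\tau$-rigid modules in \cite{AIR}) is needed; counting alone cannot substitute for it.
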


We also have the following properties for the tensor products of
tilting modules.

\begin{proposition}\label{2.10}
Let $T$ be a tilting $\Lambda$-module
with $\Gamma = \End_{\Lambda} (T)$.
\begin{enumerate}[\rm(1)]
\item Let $U$ be a tilting $\Gamma$-module. If
      $\Tor_{i}^{\Gamma} (U,T) = 0$ for any $i > 0$
      and $\projdim  (U \otimes_{\Gamma} T) \leq
      1$, then $U \otimes_{\Gamma} T$ is a tilting $\Lambda$-module with
      $\End_{\Lambda} (U \otimes_{\Gamma} T) \cong
      \End_{\Gamma} (U)$.
\item Let $V$ be a tilting $\Lambda$-module. If
      $\Ext_{\Lambda}^{i} (T,V) = 0$ for any $i >
      0$ and $\projdim \Hom_{\Lambda} (T,V
     )_{\Gamma} \leq 1$, then $\Hom_{\Lambda} (T,V
     )$ is a tilting $\Gamma$-module with $\End_{\Gamma}
      (\Hom_{\Lambda} (T,V)) \cong
      \End_{\Lambda} (V)$.
\end{enumerate}
\end{proposition}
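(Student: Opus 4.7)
The plan is to verify the three defining conditions of a tilting module from Definition \ref{2.8}, using (T3') in place of (T3), for $U\otimes_\Gamma T$ in part (1), and then to indicate how the parallel argument yields (2). Condition (T1) is the hypothesis. Starting from a projective resolution $0\to Q_1\to Q_0\to U\to 0$ over $\Gamma$ (which exists since $\projdim_\Gamma U\le 1$), the assumption $\Tor_1^\Gamma(U,T)=0$ produces the short exact sequence
\begin{equation*}
0 \to Q_1\otimes_\Gamma T \to Q_0\otimes_\Gamma T \to U\otimes_\Gamma T \to 0 \tag{$\ast$}
\end{equation*}
whose first two terms lie in $\add T$, because $\Gamma\otimes_\Gamma T = T$.

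Next, applying $\Hom_\Lambda(T,-)$ to $(\ast)$ and using $\Ext_\Lambda^1(T,X)=0$ for $X\in\add T$, one obtains a short exact sequence of $\Gamma$-modules which, via the natural isomorphism $\Hom_\Lambda(T,Q\otimes_\Gamma T)\cong Q$---valid for any finitely generated projective $\Gamma$-module $Q$ by additivity from the case $Q=\Gamma$---is identified with the original resolution $0\to Q_1\to Q_0\to U\to 0$; in particular $\Hom_\Lambda(T,U\otimes_\Gamma T)\cong U$. Applying $\Hom_\Lambda(-,U\otimes_\Gamma T)$ to $(\ast)$ and combining the tensor-hom adjunction with this identification,
\[
\Hom_\Lambda(Q_i\otimes_\Gamma T,\,U\otimes_\Gamma T)\cong\Hom_\Gamma(Q_i,\,U),
\]
so the resulting long exact sequence coincides with the one computing $\Ext_\Gamma^\ast(U,U)$ from $0\to Q_1\to Q_0\to U\to 0$. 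This yields simultaneously $\End_\Lambda(U\otimes_\Gamma T)\cong\End_\Gamma(U)$ and $\Ext_\Lambda^1(U\otimes_\Gamma T,U\otimes_\Gamma T)\cong\Ext_\Gamma^1(U,U)=0$, proving (T2). The same adjunction shows that $-\otimes_\Gamma T : \add U\to\add(U\otimes_\Gamma T)$ is fully faithful and essentially surjective, hence an equivalence of additive categories, so the number of non-isomorphic indecomposable summands of $U\otimes_\Gamma T$ equals that of $U$, which equals the number of simple $\Gamma$-modules and, since $T$ is tilting over $\Lambda$, also the number of simple $\Lambda$-modules; this is (T3').

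Part (2) is proved by the symmetric argument in which $\Hom_\Lambda(T,-)$ plays the role of $-\otimes_\Gamma T$, a projective resolution of $V$ over $\Lambda$ replaces that of $U$ over $\Gamma$, the hypothesis $\Ext_\Lambda^i(T,V)=0$ plays the role of the Tor-vanishing, and the dual natural isomorphism $T\otimes_\Gamma\Hom_\Lambda(T,X)\cong X$ for $X\in\add T$ replaces $\Hom_\Lambda(T,Q\otimes_\Gamma T)\cong Q$. The main obstacle in both parts is establishing these natural isomorphisms functorially in the parameter and carefully tracking the left/right $\Gamma$-module structures throughout; once this is in place, the Tor/Ext-vanishing hypotheses together with the two projective resolutions make the remaining verifications formal.
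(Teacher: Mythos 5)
Your proof is correct, but it takes a genuinely different route from the paper's. The paper disposes of both parts in three lines via derived Morita theory: since $T$ is a tilting $\Lambda$-module with $\Gamma=\End_\Lambda(T)$, the functor $-\otimes^{\mathbf{L}}_\Gamma T:\DD^{\bo}(\mod\Gamma)\to\DD^{\bo}(\mod\Lambda)$ is an equivalence, so the image of the tilting complex $U$ is again a tilting complex with the same endomorphism ring, and the hypotheses $\Tor_i^\Gamma(U,T)=0$ and $\projdim(U\otimes_\Gamma T)\le 1$ say precisely that this image is (isomorphic to) a module of projective dimension at most one, hence a tilting module. Your argument instead verifies (T1), (T2), (T3$'$) directly from the two-term projective resolution of $U$ and the tensor--hom adjunction; this buys self-containedness (no appeal to Happel--Rickard theory) at the cost of length and of having to track the identifications carefully. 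One such identification should be made explicit: to get $\Ext^1_\Lambda(U\otimes_\Gamma T,U\otimes_\Gamma T)=0$ from the long exact sequence obtained by applying $\Hom_\Lambda(-,U\otimes_\Gamma T)$ to your sequence $(\ast)$, the surjectivity of $\Hom_\Gamma(Q_0,U)\to\Hom_\Gamma(Q_1,U)$ (i.e.\ $\Ext^1_\Gamma(U,U)=0$) only shows that $\Ext^1_\Lambda(U\otimes_\Gamma T,U\otimes_\Gamma T)$ embeds into $\Ext^1_\Lambda(Q_0\otimes_\Gamma T,U\otimes_\Gamma T)$, so the two long exact sequences do not literally coincide until you also check $\Ext^1_\Lambda(T,U\otimes_\Gamma T)=0$; this follows in one line by applying $\Hom_\Lambda(T,-)$ to $(\ast)$ and using $\Ext^1_\Lambda(T,T)=0$ together with $\Ext^2_\Lambda(T,-)=0$ from $\projdim T\le 1$. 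With that line added, the rest of your argument, including the equivalence $\add U\simeq\add(U\otimes_\Gamma T)$ and the summand count for (T3$'$), goes through, and the symmetric argument for (2) is likewise sound.
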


\begin{proof}
(1) Since $-\otimes_{\Gamma}^{\mathbf{L}}T:\DD^{\bo}(\mod\Gamma)\to\DD^{\bo}(\mod\Lambda)$ is a triangle equivalence, $U \otimes_{\Gamma}^{\mathbf{L}} T$ is a tilting complex of $\Lambda$.
Since $\Tor_{i}^{\Gamma} (U,T) = 0$ for any $i > 0$ by our assumption, $U \otimes_{\Gamma} T \cong U
\otimes_{\Gamma}^{\mathbf{L}} T$ holds.
Since $\projdim  (U \otimes_{\Gamma} T) \leq 1$, the assertion holds.
One can show (2) similarly.
\end{proof}

Denote by $\tau$ the AR-translation and denote by $|N|$ the number
of non-isomorphic indecomposable direct summands of $N$ for a
$\Lambda$-module $N$. In the following we recall some basic
properties of $\tau$-tilting theory. Firstly, we need the following
definition in \cite{AIR}.

\begin{definition}\label{2.17}
\begin{enumerate}[\rm(1)]
\item We call $N \in \mod \Lambda$ {\it $\tau$-rigid} if ${\rm
Hom}_{\Lambda}(N, \tau N) = 0$.
\item We call $N \in \mod \Lambda$ {\it $\tau$-tilting} if $N$ is
$\tau$-rigid and $|N| = |\Lambda|$.
\item We call $N \in \mod \Lambda$ {\it support $\tau$-tilting} if there
exists a basic idempotent $e$ of $\Lambda$ such that $N$ is a
$\tau$-tilting $(\Lambda/(e))$-module.
In this case, we call $(N,e\Lambda)$ a \emph{support $\tau$-tilting pair}.
\end{enumerate}
\end{definition}

It is clear that every tilting $\Lambda$-module is a $\tau$-tilting
$\Lambda$-module, and hence a support $\tau$-tilting module.
Moreover, it is showed in \cite{AIR} tilting $\Lambda$-modules are
exactly faithful support $\tau$-tilting modules.
Clearly any support $\tau$-tilting pair $(N,e\Lambda)$ satisfies
$|N|+|e\Lambda|=|\Lambda|$.

For a torsion class $\TT$ in $\mod\Lambda$, we denote by $P(\TT)$ the direct sum of one copy of each of the indecomposable Ext-projective objects in $\TT$ up to isomorphism.
The following properties of $\tau$-rigid modules are important.

\begin{definition-proposition}{\cite[Theorem 2.10]{AIR}}\label{2.18}
Let $\Lambda$ be an algebra and let $U$ be a $\tau$-rigid module.
Then $T=P({}^\perp\tau U)$ is a $\tau$-tilting $\Lambda$-module,
where ${}^\perp\tau U$ consists of $\Lambda$-modules $X$ satisfying
$\Hom_{\Lambda}(X,\tau U)=0$.
We call $T$ the \emph{Bongartz completion} of $U$.
\end{definition-proposition}

Recall that $\sttilt\Lambda$ is the set of isomorphism classes of basic support $\tau$-tilting $\Lambda$-modules.
For a $\Lambda$-module $X$, we define a full subcategory of $\mod\Lambda$ by
\begin{eqnarray*}
\Fac X= \{ Y\in \mod \Lambda \mid \text{There exists an epimorphism
}X^{n} \rightarrow Y \text{ for some } n \geq 0 \}&
\end{eqnarray*}
Now we define the partial order on $\sttilt\Lambda$ as follows:

\begin{definition}\label{2.r} For
basic support $\tau$-tilting $\Lambda$-modules $T,U$, we write
$T \leq U$ if $\Fac T \subseteq \Fac U$. Then the relation
$\leq$ gives a partial order on the set $\sttilt\Lambda$ by \cite[Theorem 2.7]{AIR}. We call this partial order a {\it generation order}.
\end{definition}
Clearly $\Lambda$ is a unique maximal element and $0$ is a unique
minimal element in $\sttilt\Lambda$.

We now recall the Hasse quiver of general posets.

\begin{definition}\label{2.14}
The {\it Hasse quiver} ${\rm H}(P)$ of a poset $(P, \le)$ is defined
as follows:
 \begin{enumerate}[\rm(1)]

\item The vertices are the elements of the poset
$P$.

\item For $X, Y \in P$, there is an arrow $X \rightarrow Y$
if and only if $X > Y$ and there is no $Z \in P$ satisfying $X > Z > Y$.
\end{enumerate}
\end{definition}

The following observation is clear.

\begin{lemma}\label{2.15}
Two partial orders on a finite set are the same if and only if their Hasse quivers are the same.
\end{lemma}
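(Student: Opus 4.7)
The plan is as follows. One direction is immediate: the Hasse quiver $\mathrm{H}(P)$ is constructed from the partial order $\le$ by a fixed recipe (Definition \ref{2.14}), so equal partial orders on the same underlying set produce identical Hasse quivers.

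For the converse, I would establish the following recovery principle: in a finite poset $(P,\le)$, for distinct $X,Y\in P$ we have $X>Y$ if and only if there is a directed path $X=X_{0}\to X_{1}\to\cdots\to X_{k}=Y$ in $\mathrm{H}(P)$. Granted this, if $\mathrm{H}(P,\le_{1})=\mathrm{H}(P,\le_{2})$, then the sets of pairs $(X,Y)$ with $X>_{1}Y$ and with $X>_{2}Y$ coincide; combined with reflexivity (built into the definition of a partial order), this forces $\le_{1}\,=\,\le_{2}$.

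To prove the recovery principle, the ``if'' direction follows by transitivity, since each arrow $X_{i}\to X_{i+1}$ in $\mathrm{H}(P)$ witnesses $X_{i}>X_{i+1}$ in $(P,\le)$. For ``only if'', I would argue by (strong) induction on the length of a longest strict descending chain between $X$ and $Y$, which is finite since $P$ is finite. If there is no $Z\in P$ with $X>Z>Y$, then $X\to Y$ is itself an arrow in $\mathrm{H}(P)$ by definition. Otherwise choose any such $Z$; the induction hypothesis provides directed paths $X\to\cdots\to Z$ and $Z\to\cdots\to Y$ in $\mathrm{H}(P)$, whose concatenation gives the desired path $X\to\cdots\to Y$.

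Mathematically there is no serious obstacle; the lemma is essentially a bookkeeping statement. The only subtlety worth flagging is that finiteness of $P$ is genuinely used, as infinite posets such as $(\mathbb{Q},\le)$ have empty Hasse quivers despite carrying nontrivial order. I would therefore make sure the termination of the recursion that extracts covering relations between comparable elements is justified by invoking finiteness of $P$.
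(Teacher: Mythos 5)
Your proof is correct and complete; the paper itself states this lemma as an observation without proof, and your argument (recovering $>$ as the reachability relation of $\mathrm{H}(P)$, with the induction on the length of a longest strict chain justified by finiteness) is exactly the standard argument one would supply. The point you flag about finiteness being essential is well taken and is precisely why the hypothesis appears in the statement.
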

Now it is time to recall the mutations of support $\tau$-tilting
modules from \cite{AIR}.

\begin{definition}\label{mutation}
Let $T,T'\in\sttilt\Lambda$.
We call $T'$ a {\it mutation of $T$} if $T$ and $T'$ have the same indecomposable direct summands except one. Precisely speaking, one of the following three cases occurs, where $(T,P)$ and $(T',P')$ are the support $\tau$-tilting pairs.
\begin{enumerate}[\rm(1)]
\item $T=V\oplus X$ and $T'=V\oplus X'$ with $X\not\cong X'$ indecomposable;
\item $T=T'\oplus X$ and $P'=P\oplus Q'$ with $X$ and $Q'$ indecomposable.
\item $T'=T\oplus X'$ and $P=P'\oplus Q$ with $X'$ and $Q$ indecomposable;
\end{enumerate}
We call $T'$ a mutation of $T$ {\it at $X$} in cases (1)(2),
and {\it at $Q$} in case (3). It is uniquely determined by $T$ and
the indecomposable direct summand $X$ or $Q$ of $T$ or $P$ respectively.

We call $T'$ a {\it left mutation} (resp. {\it
right mutation}) of $T$ if $\Fac T'\subsetneq \Fac T$ (resp. $\Fac
T'\supsetneq \Fac T$).
\end{definition}

In the following we give a method of calculating left mutations of
support $\tau$-tilting modules due to Adachi, Iyama and Reiten
\cite{AIR}.

\begin{theorem}{\cite[Theorem 2.30]{AIR}\cite[Theorem 1.2] {Zh}}\label{2.20}
Let $T = V \oplus X$ be a basic $\tau$-tilting $\Lambda$-module
which is the Bongartz completion of $V$, where $X$ is
indecomposable. Let $X\stackrel{f}{\rightarrow} V'\stackrel{
g}{\rightarrow}Y \rightarrow 0$ be an exact sequence, where $f$ is a
minimal left $(\add V)$-approximation.
Then $Y$ is either indecomposable or zero, and $V\oplus Y$ is
a left mutation of $T$ at $X$ in both cases.
\end{theorem}

Now let us recall the relationship between mutations and the Hasse
quiver, which is given in \cite{HaU, RS} for $\tilt\Lambda$ and in
\cite{AIR} for $\sttilt\Lambda$.

\begin{theorem}\label{2.m}
Let $T,U\in\sttilt\Lambda\ (resp.\ \tilt\Lambda)$. The following are
equivalent.
\begin{enumerate}[\rm(1)]
\item $T$ is a left mutation of $U$.
\item $U$ is a right mutation of $T$.
\item $U>T$ and there is no $V\in \sttilt\Lambda \ (resp.\  \tilt\Lambda) $ such that
$U>V>T$.
\item There is an arrow from $U$ to $T$ in ${\rm H}(\sttilt\Lambda)\ (resp.\  {\rm H}(\tilt\Lambda))$.
\end{enumerate}
\end{theorem}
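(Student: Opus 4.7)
The equivalence is a standard combination of results from mutation theory, and my plan is to separate the four conditions into two easy pairs and one substantial pair. First, $(1)\Leftrightarrow(2)$ is essentially tautological: Definition~\ref{mutation} tells us that both ``$T$ is a left mutation of $U$'' and ``$U$ is a right mutation of $T$'' amount to the single relation $\Fac T\subsetneq\Fac U$ together with the summand-exchange condition, which is symmetric in the pair $(T,U)$. Next, $(3)\Leftrightarrow(4)$ is simply Definition~\ref{2.14} applied to the poset $(\sttilt\Lambda,\le)$ (respectively $\tilt\Lambda$). So the real content is the equivalence $(1)\Leftrightarrow(3)$.

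For $(1)\Rightarrow(3)$, assume $T$ is a left mutation of $U$. Then $U>T$ is automatic, so it remains to rule out an intermediate $V\in\sttilt\Lambda$ with $U>V>T$. The plan is to invoke Theorem~\ref{2.20}: writing $U=X\oplus\overline V$ with $X$ the exchanged summand and taking the minimal left $(\add\overline V)$-approximation $X\to V'\to Y\to 0$, the left mutation $T=\mu_X(U)$ is built from $\overline V$ together with (a summand of) $Y$. I would then show that for any $W\in\sttilt\Lambda$ with $\Fac T\subset\Fac W\subset\Fac U$, the module $\overline V$ must lie in $\add W$ (because $\overline V\in\Fac T\subset\Fac W$ and $\overline V$ is $\tau$-rigid together with $W$), and then the minimality of the approximation $X\to V'$ forces the ``extra'' summand of $W$ to be either $X$ (giving $W=U$) or the summand $Y'$ of $Y$ (giving $W=T$).

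For $(3)\Rightarrow(1)$, assume $U$ covers $T$ in the Hasse quiver. The strategy is to produce \emph{some} left mutation $T'$ of $U$ with $T\le T'<U$ and then conclude $T'=T$ from the covering hypothesis. To produce $T'$, I would pick an indecomposable summand $X$ of $U$ that is ``not forced'' by $T$ (concretely, some $X\in\add U$ with $X\notin\Fac T$ in case $U$ is $\tau$-tilting, or an appropriate indecomposable summand together with the auxiliary projective when $U$ is only support $\tau$-tilting), and then Theorem~\ref{2.20} produces $\mu_X(U)=T'$ with $T'<U$. The condition $T\le T'$ follows from $\Fac T\subset\Fac U$ together with the universal property of the left approximation defining $\mu_X$, which ensures that $T$, living below $U$, must actually live below the minimal left-mutation step out of $U$. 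The covering hypothesis then gives $T'=T$.

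The main obstacle in this plan is the argument that there is no intermediate $V$ in $(1)\Rightarrow(3)$, and symmetrically the existence of a left mutation sitting above $T$ in $(3)\Rightarrow(1)$; both rely on a precise analysis of how $\Fac(-)$ interacts with the approximation sequence in Theorem~\ref{2.20}, and on the fundamental fact from \cite{AIR} that the partial order on $\sttilt\Lambda$ is controlled by summand containment through the bijection with functorially finite torsion classes. Once these two facts are in hand, the equivalence of all four conditions falls out cleanly.
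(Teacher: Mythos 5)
First, a point of comparison: the paper offers no proof of Theorem~\ref{2.m} at all --- it is recalled verbatim from the literature, with \cite{HaU,RS} cited for the $\tilt\Lambda$ case and \cite{AIR} for the $\sttilt\Lambda$ case --- so there is no in-paper argument to measure yours against, and I can only assess your outline on its own merits. Your two easy reductions are correct: $(1)\Leftrightarrow(2)$ is tautological from Definition~\ref{mutation}, since both conditions say that $T$ and $U$ are mutations of one another and $\Fac T\subsetneq\Fac U$, and $(3)\Leftrightarrow(4)$ is literally Definition~\ref{2.14}. You also correctly identify $(1)\Leftrightarrow(3)$ as the substantive content; it is the Hasse-quiver-equals-exchange-quiver theorem of \cite{AIR}.

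It is in that equivalence that the genuine gaps lie, and the justifications you sketch would not close them. In $(1)\Rightarrow(3)$, the first half of your argument can be made to work: from $\overline V\in\Fac W$ and $W\in\Fac U$ one gets $\Hom_\Lambda(\overline V,\tau W)=0$ and $\Hom_\Lambda(W,\tau\overline V)=0$ by precomposing with surjections $W^{m}\to\overline V$ and $U^{m}\to W$, so $\overline V\oplus W$ is $\tau$-rigid, and maximality of support $\tau$-tilting pairs among $\tau$-rigid pairs then yields $\overline V\in\add W$ --- but note that this already forces you to work with pairs $(M,P)$ rather than modules, since $W$ may have fewer module summands than $U$. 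The real gap is the next step: concluding that the remaining summand of $W$ is $X$ or $Y'$ is precisely the theorem of \cite{AIR} that an almost complete support $\tau$-tilting pair is a direct summand of exactly two support $\tau$-tilting pairs, and ``minimality of the approximation'' is not an argument for it. In $(3)\Rightarrow(1)$, the assertion that $T\le\mu_X(U)$ for a suitably chosen summand $X\notin\Fac T$ is the crux of the whole theorem, and it does not follow from any universal property of the sequence in Theorem~\ref{2.20}: that sequence computes the replacement summand but carries no information about where $\Fac T$ sits relative to $\Fac\mu_X(U)$. What is actually needed is the torsion-class description of the two completions of an almost complete pair (the smaller completion generates exactly $\Fac$ of the common part, the larger one the subcategory of modules with no homomorphism to the $\tau$-translate of the common part and killed by the complementary projective), which is a separate theorem of \cite{AIR}. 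Finally, the $\tilt\Lambda$ version does not formally reduce to the $\sttilt\Lambda$ version, because condition $(3)$ there quantifies the intermediate $V$ only over tilting modules; that case is the older result of \cite{HaU,RS} and needs its own argument. You flag both main obstacles yourself, so this is an honest and correctly structured plan, but as written it is an outline with the two essential steps missing rather than a proof.
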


The following result \cite[Corollary 2.38]{AIR} gives a method of
judging an algebra to be $\tau$-tilting finite.

\begin{proposition}\label{2.22}
If ${\rm H}(\sttilt\Lambda)$ admits a finite connected component
${\rm C}$, then ${\rm H}(\sttilt\Lambda)={\rm C}$.
\end{proposition}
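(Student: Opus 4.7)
The plan is to deduce the result from the mutation theory of support $\tau$-tilting modules. By Theorem \ref{2.m}, the arrows of the Hasse quiver ${\rm H}(\sttilt\Lambda)$ correspond bijectively to mutations, so any connected component is closed under taking mutations. My strategy proceeds in two stages: first I show $\Lambda \in C$, and then I show $C = \sttilt\Lambda$.

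For the first stage, I would start from any $T \in C$. If $T \ne \Lambda$, then the mutation theory of \cite{AIR} provides a right mutation $T^{+} > T$, i.e.\ an arrow $T^{+} \to T$ in ${\rm H}(\sttilt\Lambda)$; closedness of the component forces $T^{+} \in C$. Iterating yields a strictly ascending chain $T < T^{+} < T^{++} < \cdots$ contained in $C$, and finiteness of $C$ forces this chain to terminate. The terminal element admits no strict right mutation, so it must coincide with the unique maximum $\Lambda$ of $\sttilt\Lambda$. Hence $\Lambda \in C$.

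For the second stage, I would suppose for contradiction that some $V \in \sttilt\Lambda$ does not lie in $C$. Applying the same ascent procedure to $V$ produces a strictly ascending chain $V < V^{+} < V^{++} < \cdots$, all in the connected component of $V$, which is by assumption disjoint from $C$. This chain never reaches $\Lambda$ (since $\Lambda \in C$), hence does not terminate. On the other hand, all neighbors of $\Lambda$ in ${\rm H}(\sttilt\Lambda)$ lie in $C$, and by the structural facts on mutation extracted from Theorem \ref{2.20} and \cite{AIR}, any upward mutation chain ultimately reaches a neighbor of $\Lambda$ once the finite component $C$ is given. This contradicts the chain from $V$ avoiding $C$ forever, and so $V \in C$.

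The main obstacle is the structural fact invoked in the second stage: ruling out an infinite ascending chain from $V$ that never enters the neighborhood of $\Lambda$. This requires the full AIR mutation framework, combining the characterization of Hasse arrows as mutations (Theorem \ref{2.m}) with the fact that descending from $\Lambda$ by left mutations, together with the finiteness of $C$ in which $\Lambda$ sits, exhausts $\sttilt\Lambda$. Once this is established, the two stages combine to give $C = \sttilt\Lambda$ as claimed.
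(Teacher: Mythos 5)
The paper offers no proof of this proposition; it is quoted verbatim from \cite[Corollary 2.38]{AIR}, so your attempt has to be measured against the standard argument there. Your first stage is sound: a connected component of ${\rm H}(\sttilt\Lambda)$ is closed under mutation by Theorem \ref{2.m}, and the fact that every $T\neq\Lambda$ admits a right mutation (so that a strictly ascending mutation chain inside the finite set ${\rm C}$ must terminate at the unique maximum $\Lambda$) is a genuine consequence of the AIR mutation theory. So $\Lambda\in{\rm C}$ is correctly established.

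The second stage, however, has a real gap, and it is exactly the one you flag yourself. Ascending from an arbitrary $V\notin{\rm C}$ gives a strictly increasing chain with no control whatsoever on where it goes: in an infinite poset such a chain may simply never terminate, and nothing you cite (Theorem \ref{2.20} computes a single left mutation; it says nothing global) forces it to ``ultimately reach a neighbor of $\Lambda$.'' That assertion is precisely the content you would need to prove, so the argument is circular at its crucial step. The correct move is to reverse the direction and exploit the directedness statement of \cite{AIR} (Theorem 2.33 there): if $T>U$ in $\sttilt\Lambda$, then $T$ admits a \emph{left} mutation $T'$ with $T>T'\geq U$. Starting from $\Lambda\in{\rm C}$ and the target $V\leq\Lambda$, this produces a strictly decreasing chain $\Lambda>\Lambda_1>\Lambda_2>\cdots\geq V$ in which each term is a mutation of the previous one and hence lies in ${\rm C}$; finiteness of ${\rm C}$ forces the chain to terminate, and by the lemma it can only terminate at $V$. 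Thus $V\in{\rm C}$. In short: the descent must be anchored at $\Lambda$ inside the finite component, where finiteness does the work, rather than at $V$, where it does not.
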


\section{Tilting modules over the Auslander algebra of $K[x]/(x^n)$}

Throughout this section, let $R=K[x]/(x^n)$ be a factor algebra of the polynomial ring $K[x]$ with $n\ge1$, and $\Lambda$ the Auslander algebra of $R$.
Then the AR-quiver of $R$ is
\[\xymatrix{
K\ar@<2pt>[r]&K[x]/(x^2)\ar@<2pt>[r]\ar@<2pt>[l]&K[x]/(x^3)\ar@<2pt>[r]\ar@<2pt>[l]&\cdots\ar@<2pt>[r]\ar@<2pt>[l]&K[x]/(x^{n-1})\ar@<2pt>[r]\ar@<2pt>[l]&K[x]/(x^n),\ar@<2pt>[l]
}\]
and the Auslander algebra $\Lambda$ is presented by the quiver
\[\xymatrix{
1\ar@<2pt>[r]^{a_1}&2\ar@<2pt>[r]^{a_2}\ar@<2pt>[l]^{b_2}&3\ar@<2pt>[r]^{a_3}\ar@<2pt>[l]^{b_3}&\cdots\ar@<2pt>[r]^{a_{n-2}}\ar@<2pt>[l]^{b_4}&n-1\ar@<2pt>[r]^{a_{n-1}}\ar@<2pt>[l]^{b_{n-1}}&n\ar@<2pt>[l]^{b_n}
}\] with relations $a_{1} b_{2}= 0$ and $a_{i} b_{i+1} =b_{i}
a_{i-1}$ for any $2 \leq i \leq n-1$. In this section, we classify
all tilting $\Lambda$-modules.

 Denote by $\{ e_{1}, \ldots, e_{n}
\}$ a complete set of primitive orthogonal idempotents of $\Lambda$
and denote by $ P_{i} = e_{i} \Lambda$ (resp. $ P^{i} = \Lambda
e_{i}$) the indecomposable projective $\Lambda$-module (resp.
$\Lambda^{\op}$-module). It is easy to see that $P_{1},P_{2}, \ldots ,
P_{n}$ have the following composition series (see $n=4$ for example).\\
\[\left[\begin{smallmatrix}P_1 \end{smallmatrix}\middle| \begin{smallmatrix} P_2\end{smallmatrix}\middle|
\begin{smallmatrix} P_3
\end{smallmatrix}\middle| \begin{smallmatrix}P_4 \end{smallmatrix}\right]
=\left[\begin{smallmatrix} 1\\ &2\\
&&3\\&&&4\\ \end{smallmatrix}\middle| \begin{smallmatrix} &2\\ 1&&3\\
&2&&4\\&&3\\&&&4\end{smallmatrix}\middle| \begin{smallmatrix} &&3\\ &2&&4\\
1&&3\\&2&&4\\&&3\\&&&4\\ \end{smallmatrix}\middle| \begin{smallmatrix}&&&4 \\&&3\\ &2&&4\\
1&&3\\&2&&4\\&&3\\&&&4\\ \end{smallmatrix}\right]
\]

For $1 \leq i \leq n$, we denote by $I_i$ the two-sided ideal generated by $1-e_i$.
This is a maximal left ideal and also a maximal right ideal since there are no loops at the vertex $i$.
Thus we have direct sum decompositions
\[I_{i} = P_{1} \oplus \cdots \oplus (\rad P_{i}) \oplus \cdots \oplus P_{n}
= P^{1} \oplus \cdots \oplus (\rad P^{i}) \oplus \cdots
 \oplus P^{n}.\]
Furthermore, for $1\le i\le n$, we define a $(\Lambda,\Lambda)$-bimodule by $S_{i} = \Lambda / I_{i}$. Clearly we have the
following.

\begin{proposition}\label{3.1}
Let $\Lambda$ be the Auslander algebra of $K[x]/(x^n)$. Then one gets the following.
 \begin{enumerate}[\rm(1)]
\item As a $\Lambda$-module $S_{i} \cong P_{i}
/ \rad P_{i}$ is simple. As a $\Lambda^{\op}$-module $S_{i}
\cong P^{i} / \rad P^{i}$ is simple.

\item There exists an isomorphism $P_{n} \cong
 DP^{n}$ of $\Lambda$-modules. Thus $P_{n}$ is a projective-injective
 $\Lambda$-module.

\item For $1\leq i \leq n-1$, there exist minimal
       projective resolutions of $\Lambda$-modules
\begin{equation*}
0 \rightarrow P_{i} \rightarrow P_{i-1} \oplus P_{i+1} \rightarrow
 P_{i} \rightarrow S_{i} \rightarrow 0\ \mbox{ and }\
  0 \rightarrow P_{i} \rightarrow P_{i-1} \oplus P_{i+1} \rightarrow
 \rad P_{i} \rightarrow 0.
\end{equation*}

\item There exist minimal projective resolutions of
       $\Lambda$-modules
\begin{equation*} 0 \rightarrow P_{n-1} \rightarrow P_{n} \rightarrow S_{n}
 \rightarrow 0\ \mbox{ and }\
 0 \rightarrow P_{n-1} \rightarrow \rad P_{n}
 \rightarrow 0.
\end{equation*}
\end{enumerate}
\end{proposition}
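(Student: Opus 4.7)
The plan is to dispatch the four claims separately. Part (1) is a matter of unwinding definitions, part (2) uses the Frobenius structure of $R$, and parts (3) and (4) are straightforward applications of Proposition \ref{2.3} to the Auslander--Reiten theory of $R = K[x]/(x^n)$.

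For part (1), I would simply observe that by construction $I_i$ decomposes as the right ideal $\bigoplus_{j\neq i}P_j \oplus \rad P_i$, so the quotient $S_i = \Lambda/I_i$ is canonically $P_i/\rad P_i$, the simple top of $P_i$. The parallel decomposition of $I_i$ as a left ideal gives the statement for $\Lambda^{\op}$-modules.

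For part (2), I would exploit that $M = M_1 \oplus \cdots \oplus M_n$ with $M_i = K[x]/(x^i)$, and that $M_n = R$ is projective-injective in $\mod R$ because $R$ is a commutative symmetric $K$-algebra. Using the equivalence $P_- = \Hom_R(M,-) : \add M \to \add \Lambda$ and the duality $P^- = \Hom_R(-,M) : \add M \to \add \Lambda^{\op}$, I identify $P_n = \Hom_R(M,R)$ and $P^n \cong \Hom_R(R,M) = M$. Because $R$ is symmetric, the Nakayama functor $D\Hom_R(-,R)$ is isomorphic to the identity on $\mod R$, so $\Hom_R(M,R) \cong DM \cong DP^n$ naturally in $M$, giving the isomorphism $P_n \cong DP^n$ of right $\Lambda$-modules. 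Projective-injectivity of $P_n$ follows.

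For parts (3) and (4), I would feed the Auslander--Reiten structure of $\mod R$ into Proposition \ref{2.3}. For $R = K[x]/(x^n)$ one has $\tau M_i \cong M_i$ for $1 \leq i \leq n-1$ (since $R$ is symmetric and $\Omega M_i \cong M_{n-i}$, so $\tau \cong \Omega^2$ fixes each non-projective $M_i$), and the almost split sequence ending at $M_i$ is $0 \to M_i \to M_{i-1} \oplus M_{i+1} \to M_i \to 0$ with the convention $M_0 = 0$. Proposition \ref{2.3}(2) immediately produces the minimal projective resolution $0 \to P_i \to P_{i-1} \oplus P_{i+1} \to P_i \to S_i \to 0$ in (3), and truncating at $\rad P_i$ yields the second sequence. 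For (4), $M_n = R$ is projective with $\rad M_n = (x)/(x^n) \cong M_{n-1}$, so Proposition \ref{2.3}(1) gives $0 \to P_{n-1} \to P_n \to S_n \to 0$, and truncating yields $\rad P_n \cong P_{n-1}$.

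The only point that requires a little care is verifying in (2) that the Nakayama isomorphism $\Hom_R(M,R) \cong DM$ really respects the right $\Lambda$-action obtained from the left $\Lambda$-action on $M$; once this naturality is checked, every other step is either a direct unwinding of definitions or a mechanical application of Proposition \ref{2.3}.
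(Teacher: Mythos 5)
The paper offers no proof of this proposition at all (it is introduced with ``Clearly we have the following''), and your argument is correct and supplies exactly the reasoning the preliminaries are set up for: part (1) from the decomposition $I_i=\bigoplus_{j\neq i}P_j\oplus\rad P_i$, part (2) from $P_n\cong\Hom_R(M,R)$, $P^n\cong M$ and the symmetry of $R=K[x]/(x^n)$, and parts (3)--(4) by feeding the almost split sequences $0\to M_i\to M_{i-1}\oplus M_{i+1}\to M_i\to 0$ (with $\tau M_i\cong M_i$) and the isomorphism $\rad M_n\cong M_{n-1}$ into Proposition 2.3. Your flagged care point --- that the Nakayama isomorphism $\Hom_R(M,R)\cong DM$ is natural in $M$ and hence compatible with the right $\End_R(M)$-action --- is indeed the only nontrivial verification, and it holds since the isomorphism is given by postcomposition with a symmetrizing form.
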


\begin{proof}
(1) is clear. (3) and (4) are immediate from Proposition \ref{2.3} and the AR-quiver of $R$ above.

(2) Since $R$ is a symmetric $K$-algebra, we have an isomorphism $\Hom_R(-,R)\cong D\Hom_R(R,-)$ of functors. This gives the desired isomorphism.
\end{proof}

We need the following properties of tilting $\Lambda$-modules.

\begin{lemma}\label{3.4}
Let $X$ be a $\Lambda$-module. For $1 \leq
i \leq n-1$, there exist isomorphisms
$\Ext^{2}_{\Lambda} (S_{i}, X) \cong
 X \otimes_{\Lambda} S_{i}$ and $\Ext^{1}_{\Lambda}
 (S_{i}, X) \cong \Tor_{\Lambda}^{1} (X,S_{i})$.
If $X$ is tilting, then precisely one of them is zero.
 \end{lemma}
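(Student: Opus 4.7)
The plan is to deduce both isomorphisms from Lemma \ref{2.4}(2), and then to extract the ``precisely one vanishes'' statement from the general structural Lemma \ref{2.9}(3) applied to the bimodule $S_i$.

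First I would record the essential piece of Auslander--Reiten combinatorics for $R=K[x]/(x^n)$: the indecomposable $R$-modules are $M_1,\ldots,M_n$ with $M_k=R/(x^k)$, only $M_n$ is projective, and for $1\le i\le n-1$ the almost split sequence ending at $M_i$ has the form $0\to M_i\to M_{i-1}\oplus M_{i+1}\to M_i\to 0$ (with $M_0=0$ and $M_{n-2}\oplus M_n$ in the middle when $i=n-1$), so that $\tau M_i=M_i$. Under the Auslander correspondence the vertex $i$ corresponds to $M_i$, so $S_i=S_{M_i}$ as a right $\Lambda$-module and $S_i=S^{M_i}$ as a left $\Lambda$-module (cf.\ Proposition \ref{3.1}(1) and the definition of $S_i=\Lambda/I_i$ as a bimodule).

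Now for the first assertion, since $M_i$ is non-projective for $1\le i\le n-1$, Lemma \ref{2.4}(2) applies and gives for every $\Lambda$-module $X$ and every $k\in\mathbb{Z}$ an isomorphism
\[
\Ext^{k}_{\Lambda}(S_i,X)\;\cong\;\Tor_{2-k}^{\Lambda}\bigl(X,S^{\tau M_i}\bigr).
\]
Because $\tau M_i=M_i$, we have $S^{\tau M_i}=S^{M_i}=S_i$ (as a left module). Setting $k=2$ yields $\Ext^{2}_{\Lambda}(S_i,X)\cong X\otimes_\Lambda S_i$, and setting $k=1$ yields $\Ext^{1}_{\Lambda}(S_i,X)\cong\Tor_1^{\Lambda}(X,S_i)$, which are the two isomorphisms claimed.

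For the second assertion, take $X$ tilting. Viewing $S_i$ as a simple $\Lambda^{\op}$-module, Lemma \ref{2.9}(3) tells us that precisely one of $X\otimes_\Lambda S_i=0$ and $\Tor_1^\Lambda(X,S_i)=0$ holds; transporting along the two isomorphisms of the first part gives the conclusion. I do not foresee a genuine obstacle here: the whole argument is an application of already-established facts, the only subtle point being the identification $\tau M_i=M_i$ in $\mod R$, which is why the hypothesis $1\le i\le n-1$ (so that $M_i$ is non-projective, hence $S_i$ has projective dimension $2$ by Proposition \ref{2.3}(2)) is needed for Lemma \ref{2.4}(2) to apply.
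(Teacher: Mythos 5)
Your proof is correct and follows essentially the same route as the paper: both deduce the isomorphisms from Lemma \ref{2.4}(2) via the identification $\tau M_i=M_i$ (so $S^{\tau M_i}=S_i$), and both obtain the dichotomy from Lemma \ref{2.9}(3). The only difference is that you spell out the Auslander--Reiten data for $K[x]/(x^n)$ that the paper leaves implicit, which is a reasonable thing to make explicit.
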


\begin{proof}
Since each indecomposable nonprojective $R$-module is $\tau$-stable, we have $\Ext_{\Lambda}^{j}
(S_{i},X) \cong \Tor_{2-j}^{\Lambda} (X,S_{i})$ for $j=1,2$ by Lemma \ref{2.4}(2).
The latter statement follows from Proposition \ref{2.9}(3).
 \end{proof}

Now we are in a position to show the following proposition.

\begin{proposition}\label{3.2}
For $1 \leq i \leq n-1$, $I_{i}$ is a tilting
$\Lambda$-module and a tilting $\Lambda^{\op}$-module.
\end{proposition}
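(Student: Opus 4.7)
The plan is to verify the three defining conditions (T1), (T2), (T3) of Definition~\ref{2.8} for $T = I_i$ as a right $\Lambda$-module; the $\Lambda^{\op}$-case follows by the symmetric argument, since as a left $\Lambda$-module one has the analogous decomposition $I_i = \bigoplus_{j \neq i} P^j \oplus \rad P^i$ together with the analogous resolution of $\rad P^i$ from Proposition~\ref{3.1}(3).

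Condition (T1), $\projdim_\Lambda I_i \leq 1$, is immediate: each $P_j$ ($j \neq i$) is projective, and $\rad P_i$ has the length-$1$ resolution $0 \to P_i \xrightarrow{f} P_{i-1} \oplus P_{i+1} \to \rad P_i \to 0$ furnished by Proposition~\ref{3.1}(3). For condition (T3), I direct-sum this resolution with the identity on $\bigoplus_{j \neq i} P_j$ to obtain
\[
0 \to \Lambda \to \Bigl(\bigoplus_{j \neq i} P_j\Bigr) \oplus P_{i-1} \oplus P_{i+1} \to \rad P_i \to 0,
\]
whose middle and right terms visibly lie in $\add I_i$.

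The main content is (T2), that $\Ext^1_\Lambda(I_i, I_i) = 0$. Since the projective summands of $I_i$ contribute trivially, the question reduces to $\Ext^1_\Lambda(\rad P_i, I_i) = 0$. I apply $\Hom_\Lambda(-, I_i)$ to the resolution of $\rad P_i$ and identify the Ext as the cokernel of the induced map $f^\ast$. Under the canonical identification $\Hom_\Lambda(P_k, I_i) = I_i e_k$, one has $I_i e_k = \Lambda e_k$ for $k \neq i$ (since $e_k \in I_i$) and $I_i e_i = \rad(\Lambda) e_i$. Writing $f(e_i) = (a_{i-1}, -b_{i+1})$ (the signs forced by the defining relation $b_i a_{i-1} = a_i b_{i+1}$), the map $f^\ast$ sends $(u_1, u_2) \in \Lambda e_{i-1} \oplus \Lambda e_{i+1}$ to $u_1 a_{i-1} - u_2 b_{i+1}$, with image the left ideal $\Lambda a_{i-1} + \Lambda b_{i+1}$ inside $\rad(\Lambda) e_i$.

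The main obstacle is thus the combinatorial identity $\Lambda a_{i-1} + \Lambda b_{i+1} = \rad(\Lambda) e_i$: every path of length $\geq 1$ ending at vertex $i$ must terminate with one of the two arrows $a_{i-1}$ or $b_{i+1}$ pointing into $i$ (with the obvious simplification $\Lambda b_2 = \rad(\Lambda) e_1$ when $i = 1$). This gives the surjectivity of $f^\ast$, hence (T2). Once (T1), (T2), (T3) are in place, the equivalence of (T3) and (T3') in Definition~\ref{2.8} certifies $I_i$ as a tilting module, and the non-isotypic top $S_{i-1} \oplus S_{i+1}$ of $\rad P_i$ (or $S_2$ when $i=1$) forces the indecomposable summand $\rad P_i$ to be distinct from every $P_j$.
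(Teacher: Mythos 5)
Your proof is correct, and while (T1) and (T3) coincide with the paper's argument (same resolution of $\rad P_i$ from Proposition~\ref{3.1}(3), same direct sum trick for the sequence $0\to\Lambda\to(\bigoplus_{j\neq i}P_j)\oplus P_{i-1}\oplus P_{i+1}\to\rad P_i\to 0$), your treatment of (T2) is genuinely different. The paper never touches the quiver: it dimension-shifts $\Ext^1_\Lambda(\rad P_i,I_i)\cong\Ext^2_\Lambda(S_i,I_i)$, converts this to $I_i\otimes_\Lambda S_i$ via Lemma~\ref{2.4} (which encodes the Auslander-algebra duality between the projective resolution of $S_i$ and that of the left simple $S_i$), and kills the tensor product using $(P_{i-1}\oplus P_{i+1})\otimes_\Lambda S_i=0$. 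You instead compute $\Ext^1_\Lambda(\rad P_i,I_i)$ directly as $\operatorname{coker} f^{\ast}$ and verify surjectivity of $f^{\ast}$ by the combinatorial identity $\Lambda a_{i-1}+\Lambda b_{i+1}=\rad(\Lambda)e_i=I_ie_i$, which holds because every positive-length path into vertex $i$ factors through one of the two arrows with target $i$; all the identifications you use ($\Hom_\Lambda(P_k,I_i)=I_ie_k$, $I_ie_k=\Lambda e_k$ for $k\neq i$, $I_ie_i=\rad(\Lambda)e_i$) are correct, and the sign issue is immaterial for the image computation. Your route is more elementary and self-contained but tied to this particular presentation of $\Lambda$; the paper's route is what generalizes, since the same $\Ext^2(S_i,-)\cong-\otimes_\Lambda S_i$ mechanism is reused throughout Section~3 (e.g.\ Lemma~\ref{3.4}, Proposition~\ref{3.8}) to control products and Hom's of the ideals $I_i$. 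One small caveat: your closing remark invokes the equivalence with (T3$'$) and asserts indecomposability of $\rad P_i$, which you have not proved (a non-simple top does not force indecomposability); this is harmless only because you already verified (T3) directly, so that sentence should simply be dropped.
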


\begin{proof}
We only prove the case of a $\Lambda$-module since the case of a
 $\Lambda^{\op}$-module is similar.
By definition, we have $I_{i} = (
 \bigoplus_{j \neq i} P_{j}) \oplus \rad P_{i}$.

(T1) By Proposition \ref{3.1}(3), we have
 $\mathrm{proj.dim} \rad P_{i} \leq 1$. Thus $\projdim I_{i} \leq 1$.

(T2) It suffices to show that
$\Ext_{\Lambda}^{1}(\rad P_{i} , I_{i}) = 0$.
Since there exists an exact sequence
$0 \rightarrow \rad P_{i}
 \rightarrow P_{i} \rightarrow S_{i} \rightarrow 0$,
we have $\Ext_{\Lambda}^{2} (S_{i} , I_{i}) \cong \Ext_{\Lambda}^{1} (\rad  P_{i} , I_{i})$.
By Lemma \ref{3.4}, we have $\Ext_{\Lambda}^{2}(S_{i} , I_{i}) \cong I_{i} \otimes_{\Lambda}S_{i}$.
On the other hand, we have $ P_{j} \otimes_{\Lambda} S_{i}=e_j\Lambda\otimes_\Lambda S_i=e_jS_i=0$
for any $j \neq i$. 
By Proposition \ref{3.1}(3), there
 exists an exact sequence $0 = (P_{i-1} \oplus P_{i+1}) \otimes_{\Lambda} S_{i}
 \rightarrow  (\rad P_{i}) \otimes_{\Lambda} S_{i} \rightarrow 0.$
Thus we have $(\rad P_{i}) \otimes_{\Lambda}S_{i} =0$ and $I_{i} \otimes_{\Lambda} S_{i} = 0$.

(T3) By Proposition \ref{3.1}(3), there exists an exact sequence
$0 \rightarrow \Lambda \rightarrow  (\bigoplus_{j \neq i} P_{j}
) \oplus P_{i-1} \oplus P_{i+1}  \rightarrow  \rad
 P_{i} \rightarrow 0.$
The middle and right terms of this sequence are contained in
 $\add  I_{i}$.
 \end{proof}

Notice that $I_{n}$ is not a tilting $\Lambda$-module. In fact
$I_{n} = (\bigoplus_{i=1}^{n-1}P_{i}) \oplus (\rad P_{n})$ and $\rad
P_{n} \cong P_{n-1}$ hold by Proposition \ref{3.1}(4), and hence
$|I_{n}|=n-1$. This is not possible for tilting $\Lambda$-modules.

To show that any multiplication of ideals $I_{1}, \cdots,I_{n-1}$ is a tilting $\Lambda$-module,
we now prepare the following.

\begin{proposition}\label{3.3}
\begin{enumerate}[\rm(1)]
\item For $1 \leq i \leq n$, we have $\Hom_{\Lambda} (I_{i},S_{i}) = 0$.
\item For $1 \leq i \leq n-1$, the left multiplication $\Lambda\to\End_{\Lambda}(I_{i})$ and
the right multiplication $\Lambda^{\op}\to
\End_{\Lambda^{\op}}(I_{i})$ are isomorphisms.
\end{enumerate}
\end{proposition}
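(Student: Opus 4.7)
The plan is to prove (1) directly from the radical structure of $\Lambda$, then derive (2) by applying $\Hom_{\Lambda}(-,-)$ in two different ways to the defining short exact sequence $0 \to I_i \to \Lambda \to S_i \to 0$ and invoking Lemma \ref{2.4}.

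For part (1), I would decompose $I_i = \bigl(\bigoplus_{j \neq i} P_j\bigr) \oplus \rad P_i$. The summands $P_j$ with $j \neq i$ contribute nothing since $\Hom_\Lambda(P_j, S_i) = e_j S_i = 0$, so the task reduces to showing $\Hom_\Lambda(\rad P_i, S_i) = 0$. Proposition \ref{3.1}(3)/(4) furnishes a projective cover $P_{i-1} \oplus P_{i+1} \twoheadrightarrow \rad P_i$ (with the obvious degeneration when $i = 1$ or $i = n$); applying $\Hom_\Lambda(-, S_i)$ and noting $\Hom_\Lambda(P_{i\pm 1}, S_i) = 0$, left-exactness forces $\Hom_\Lambda(\rad P_i, S_i) = 0$. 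The $\Lambda^{\op}$-analogue $\Hom_{\Lambda^{\op}}(I_i, S_i) = 0$ follows by the symmetric argument on left modules.

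For part (2) I treat the right-module case, the $\Lambda^{\op}$-case being entirely parallel. Applying $\Hom_\Lambda(I_i, -)$ to $0 \to I_i \to \Lambda \to S_i \to 0$ and using part (1), one obtains an isomorphism $\End_\Lambda(I_i) \cong \Hom_\Lambda(I_i, \Lambda)$ given by postcomposition with the inclusion $I_i \hookrightarrow \Lambda$. Applying $\Hom_\Lambda(-, \Lambda)$ to the same sequence yields
\[
0 \to \Hom_\Lambda(S_i, \Lambda) \to \Lambda \to \Hom_\Lambda(I_i, \Lambda) \to \Ext^1_\Lambda(S_i, \Lambda) \to 0.
\]
For $1 \le i \le n-1$, $S_i = S_{M_i}$ where $M_i = K[x]/(x^i)$ is non-projective over $R$ (since $R$ is local with unique indecomposable projective $R = M_n$), so Lemma \ref{2.4}(1) gives $\Hom_\Lambda(S_i, \Lambda) = 0 = \Ext^1_\Lambda(S_i, \Lambda)$. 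Hence $\Lambda \cong \Hom_\Lambda(I_i, \Lambda)$, and composing with the earlier isomorphism yields $\Lambda \cong \End_\Lambda(I_i)$.

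The main point requiring care is the identification of the composite isomorphism with the left-multiplication map. Tracing through: $\lambda \in \Lambda$ corresponds under $\End_\Lambda(\Lambda) \cong \Lambda$ to left multiplication by $\lambda$; restricting to $I_i$ gives the homomorphism $x \mapsto \lambda x$ in $\Hom_\Lambda(I_i, \Lambda)$; since $I_i$ is a two-sided ideal this factors through $I_i$, and unwinding the first isomorphism recovers exactly left multiplication by $\lambda$ as an endomorphism of $I_i$. So the identification is natural and there is no genuine obstacle; the only substantive input is Lemma \ref{2.4}, whose non-projectivity hypothesis on $M_i$ is precisely what forces the restriction $i \le n-1$ in the statement of (2).
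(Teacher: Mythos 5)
Your proposal is correct and follows essentially the same route as the paper: part (1) via the decomposition $I_i=(\bigoplus_{j\neq i}P_j)\oplus\rad P_i$ and the projective cover from Proposition \ref{3.1}, and part (2) by applying $\Hom_\Lambda(I_i,-)$ and $\Hom_\Lambda(-,\Lambda)$ to $0\to I_i\to\Lambda\to S_i\to0$ together with Lemma \ref{2.4}. Your extra care in identifying the composite isomorphism with left multiplication is a welcome detail the paper leaves implicit, but it is not a departure in method.
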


\begin{proof}
(1) For $j \neq i$, we have
$\Hom_{\Lambda} (P_{j},S_{i}) = 0$. Further, by Proposition \ref{3.1}(3)(4),
one gets $\Hom_{\Lambda} (\rad P_{i}, S_{i}) = 0$.
Thus we have $\Hom_{\Lambda} (I_{i},S_{i}) = 0$.

(2) Applying $\Hom_{\Lambda} (-,
\Lambda)$ to a short exact sequence
\begin{equation}\label{3.3.1}
0 \rightarrow I_{i}
\rightarrow \Lambda \rightarrow S_{i} \rightarrow0
\end{equation}
yields a long exact sequence
$0 \rightarrow \Hom_{\Lambda} (S_{i},
 \Lambda) \rightarrow  \Hom_{\Lambda} (\Lambda,
 \Lambda) \rightarrow \Hom_{\Lambda} (I_{i},
 \Lambda) \rightarrow  \Ext_{\Lambda}^{1} (S_{i} , \Lambda) \rightarrow 0$.
Then by Lemma \ref{2.4}, we have $ \Hom_{\Lambda} (S_{i}, \Lambda) =
\Ext_{\Lambda}^{1} (S_{i} , \Lambda) = 0$, and hence
$\Hom_{\Lambda}(I_{i} , \Lambda) \cong \Hom_{\Lambda} (\Lambda, \Lambda) \cong \Lambda$.
On the other hand, applying $\Hom_{\Lambda} (I_{i},-)$ to the short exact sequence \eqref{3.3.1},
one gets an exact sequence
$0 \rightarrow \Hom_{\Lambda} (I_{i} , I_{i}) \rightarrow  \Hom_{\Lambda} (I_{i},
 \Lambda)  \rightarrow \Hom_{\Lambda} (I_{i},S_{i})$.
Using (1), we have $\End_{\Lambda} (I_{i}) \cong \Hom_{\Lambda} (I_{i}, \Lambda) \cong
 \Lambda$.
\end{proof}

From the argument above, we have the following proposition on
 the multiplication of tilting $\Lambda$-modules.

\begin{proposition}\label{3.6}
Let $T$ be a tilting $\Lambda$-module and $1 \leq i \leq n-1$. Then
we have the following.
\begin{enumerate}[\rm(1)]

\item If $TI_{i} \neq T$, then $TI_{i}\cong T \otimes_{\Lambda} I_{i}=T\LotimesL I_i$.

\item $TI_{i}$ is a tilting $\Lambda$-module, and
       $\End_{\Lambda} (TI_{i})\cong
       \End_{\Lambda} (T)$.

\end{enumerate}
\end{proposition}

\begin{proof}

 (1) Since $T I_{i} \neq T$, then $T \otimes_{\Lambda} S_{i} \cong T /
TI_{i} \neq0$, and we have $\Tor_{1}^{\Lambda}
 (T,S_{i}) = 0$ by Proposition \ref{2.9}(3).
Applying $T \otimes_{\Lambda} -$ to the short exact sequence $0
\rightarrow I_{i} \rightarrow \Lambda \rightarrow S_{i} \rightarrow
0$, one gets an exact sequence $0=\Tor_{1}^{\Lambda} (T,
S_{i})\rightarrow T \otimes_{\Lambda} I_{i} \rightarrow
T\otimes_{\Lambda} \Lambda\cong T$. Thus the natural map $T
\otimes_{\Lambda} I_{i} \rightarrow T$ is injective and has the
image $TI_i$. Thus we obtain $T\otimes_{\Lambda} I_{i} \cong
TI_{i}$. Moreover, we have $\Tor_{j}^{\Lambda} (T,I_{i}) \cong
 \Tor_{j+1}^{\Lambda} (T,S_{i})=0$ for $j \geq 1$ since $\projdim T \leq
 1$. Thus $T \otimes_{\Lambda} I_{i}=T\LotimesL I_i$.

(2) If $T I_{i}  = T$, then the assertion is clear. Now assume that
$T I_{i}\not= T$. Since we have $\End_{\Lambda} (I_{i}) \cong \Lambda$
by Proposition \ref{3.3}, $T \otimes_{\Lambda} I_{i} \cong TI_{i}$ is a
tilting module with $\End_{\Lambda} (T)\cong \End_{\Lambda} (TI_{i})$
by (1) and Proposition \ref{2.10}(1).
\end{proof}

 Denote by $ \langle I_{1} , \ldots , I_{n-1} \rangle $
the set of ideals of $\Lambda$ given by products of $I_{1} , \ldots
, I_{n-1}$, where the empty product $\Lambda$ is also contained in
this set. Now we can state the following result.

\begin{theorem}\label{3.7}
Any ideal $T$ in $\langle I_{1} , \ldots , I_{n-1}\rangle$ is a
basic tilting $\Lambda$-module and a basic tilting
$\Lambda^{\op}$-module. The left multiplication
$\Lambda\to\End_{\Lambda} (T)$ and the right multiplication
 $\Lambda^{\op}\to\End_{\Lambda^{\op}} (T)$ are isomorphisms.
 \end{theorem}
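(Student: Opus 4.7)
The natural plan is to induct on the length of a product expression $T = I_{i_1}I_{i_2}\cdots I_{i_k}$, building on Propositions \ref{3.2}, \ref{3.3}, and \ref{3.6}. For the base case $k=0$ we have $T=\Lambda$, and everything is trivial; for $k=1$ we have $T=I_{i_1}$ with $1\le i_1\le n-1$, and Propositions \ref{3.2} and \ref{3.3} supply the tilting property and the endomorphism-ring isomorphism on both sides.

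For the inductive step, assume $T' = I_{i_1}\cdots I_{i_{k-1}}$ is a basic tilting $\Lambda$-module with $\End_\Lambda(T')\cong\Lambda$ via left multiplication. Then by Proposition \ref{3.6}(1) applied to $T'$ and $i_k$, the ideal $T = T'I_{i_k}$ is a tilting $\Lambda$-module and the canonical map $\Lambda\to\End_\Lambda(T)$ factors as $\Lambda\cong\End_\Lambda(T')=\End_\Lambda(TI_{i_k})=\End_\Lambda(T)$, hence is an isomorphism. Since Propositions \ref{3.2} and \ref{3.3} hold on both sides, an entirely symmetric Proposition \ref{3.6} on the left gives that $T$ is simultaneously tilting as a $\Lambda^{\op}$-module with $\End_{\Lambda^{\op}}(T)\cong\Lambda^{\op}$ via right multiplication. (A two-sided ideal $T$ in $\langle I_1,\dots,I_{n-1}\rangle$ does not depend on whether we form the product from the left or the right, since each $I_i$ is itself a two-sided ideal, so nothing is lost in reversing the order.)

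It remains to promote ``tilting'' to ``basic tilting.'' The key observation is that $\Lambda$ is a basic algebra, and any module $T$ with $\End_\Lambda(T)\cong\Lambda$ must itself be basic: if $T\cong\bigoplus_j T_j^{a_j}$ with the $T_j$ pairwise non-isomorphic indecomposables, then $\End_\Lambda(T)$ contains the matrix algebra $M_{a_j}(\End_\Lambda(T_j))$ as a direct summand (after modding out the radical in the Krull--Schmidt sense), so basicness of $\End_\Lambda(T)$ forces every $a_j = 1$. Thus the isomorphism $\End_\Lambda(T)\cong\Lambda$ obtained above directly implies that $T$ is basic, and the same argument on the other side confirms basicness as a $\Lambda^{\op}$-module.

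The only mildly subtle point I foresee is making sure the hypothesis $TI_{i_k}\neq T$ is not needed in the tilting conclusion of Proposition \ref{3.6}(1)—but the statement of that proposition already handles both cases uniformly (if $TI_{i_k}=T$, nothing is to prove), so there is no real obstacle. Everything else reduces to chaining the existing propositions through the induction.
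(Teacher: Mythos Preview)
Your proposal is correct and follows essentially the same route as the paper: both proceed by induction on the number of factors, using Propositions \ref{3.2} and \ref{3.3} for the base case and Proposition \ref{3.6}(1) for the inductive step, then deduce basicness from $\End_\Lambda(T)\cong\Lambda$. Your write-up is in fact a bit more detailed than the paper's, which compresses the basicness argument into the single phrase ``In particular, $TI_{i}$ is basic'' and dismisses the $\Lambda^{\op}$ case with ``similar''.
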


\begin{proof}
We only prove the case of a $\Lambda$-module since the case of a $\Lambda^{\op}$-module is similar.

By Proposition \ref{3.2}, each of $I_{1} , \ldots , I_{n-1}$ is a tilting
$\Lambda$-module such that the left multiplication $\Lambda\to\End_{\Lambda} (I_{i})$ is an isomorphism.
Assume that $T = I_{i_{1}}I_{i_{2}} \cdots I_{i_{k-1}}$
is a tilting $\Lambda$-module such that the left multiplication
$\Lambda\to\End_{\Lambda}(T)$ is an isomorphism for $i_{1}, \ldots , i_{k}
 \in \{ 1 , \ldots , n-1 \}$. Then, according to Proposition \ref{3.6}(2),
 we obtain that $TI_{i_{k}}$ is a tilting $\Lambda$-module such that
 the left multiplication $\Lambda\to\End_{\Lambda}(TI_{i_k})$ is an isomorphism.
In particular, $TI_{i_k}$ is basic. Thus we get the assertion inductively.
\end{proof}

By Theorem \ref{3.7}, any element in $\langle I_{1} , \ldots ,
I_{n-1} \rangle$ is a basic tilting $\Lambda$-module. In the
following we show the converse, that is, all basic tilting
$\Lambda$-modules are in $\langle I_{1} , \ldots , I_{n-1}
\rangle$. For this aim, we start with the following.

\begin{proposition}\label{3.8}
Let $T$ be a tilting $\Lambda$-module,
and $ 1 \leq i \leq n-1$. Then we have the following:
\begin{enumerate}[\rm(1)]
\item $\Hom_{\Lambda} (S_{i}, T) = 0$.

\item $\projdim \Hom_{\Lambda} (I_{i}, T) \leq 1$.

\item There exist natural inclusions $T \subseteq \Hom_{\Lambda}
(I_{i}, T) \subseteq T^{\ast \ast} =
\Hom_{\Lambda} (I_{i},T )^{\ast \ast}$.

\item $\Hom_{\Lambda} (I_{i} , T) / T
      \cong \Ext_{\Lambda}^{1} (S_{i} , T)$. If $T
      \subsetneq \Hom_{\Lambda} (I_{i} , T)$, then
      $\Hom_{\Lambda} (I_{i} , T) I_{i} = T$.

\item $\Hom_{\Lambda} (I_{i}, T)$ is a tilting $\Lambda$-module, and
$\End_{\Lambda} (\Hom_{\Lambda} (I_{i},T))\cong
 \End_{\Lambda} (T)$ holds.

\item If $T$ is not a projective $\Lambda$-module, then there
       exists  $1 \leq i \leq n-1$ such that $T
       \subsetneq \Hom_{\Lambda} (I_{i}, T)$.
\end{enumerate}
\end{proposition}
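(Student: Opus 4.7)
The plan is to prove the six parts working mostly in order, but handling (2) and (3) jointly since they share a single key embedding.

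For (1), I combine Lemmas \ref{2.6} and \ref{2.7} to embed $T\hookrightarrow T^{\ast\ast}$ with $T^{\ast\ast}$ projective, and then observe that every indecomposable projective $\Lambda$-module has socle in $\add S_n$. This last fact holds because $\Lambda$ has dominant dimension at least $2$ as an Auslander algebra, and $P_n$ is its unique indecomposable projective-injective module, by Proposition \ref{3.1}(2) together with the Morita correspondence of Theorem \ref{2.2} applied to the local algebra $K[x]/(x^n)$, whose only indecomposable projective-injective is $R$ itself. Hence $\soc T\in\add S_n$, so $\Hom_{\Lambda}(S_i,T)=0$ for $1\le i\le n-1$.

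For (2) and (3), applying $\Hom_{\Lambda}(-,T)$ to $0\to I_i\to\Lambda\to S_i\to 0$ and invoking (1) yields the central short exact sequence
\[ 0\to T\to\Hom_{\Lambda}(I_i,T)\to\Ext_{\Lambda}^{1}(S_i,T)\to 0, \]
which supplies the first inclusion of (3). Since Lemma \ref{2.4}(1) forces $\Hom_{\Lambda}(S_i,P)=\Ext_{\Lambda}^{1}(S_i,P)=0$ for every projective $P$, restriction produces an isomorphism $P\cong\Hom_{\Lambda}(I_i,P)$; applying $\Hom_{\Lambda}(I_i,-)$ to $T\hookrightarrow T^{\ast\ast}$ then embeds $\Hom_{\Lambda}(I_i,T)\hookrightarrow T^{\ast\ast}$. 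Part (2) now follows from projectivity of $T^{\ast\ast}$ and $\gldim\Lambda\le 2$: the long exact sequence for $\Ext$ applied to $0\to\Hom_{\Lambda}(I_i,T)\to T^{\ast\ast}\to T^{\ast\ast}/\Hom_{\Lambda}(I_i,T)\to 0$ forces $\projdim\Hom_{\Lambda}(I_i,T)\le 1$. The reflexivity identity $\Hom_{\Lambda}(I_i,T)^{\ast\ast}=T^{\ast\ast}$ in (3) is obtained by applying $\Hom_{\Lambda}(-,\Lambda)$ to the central short exact sequence: Lemma \ref{3.4} identifies $\Ext_{\Lambda}^{1}(S_i,T)$ with $\Tor_{1}^{\Lambda}(T,S_i)$, and a direct computation from the minimal projective resolution of $T$ (using $P_j\otimes_{\Lambda}S_i=\delta_{ij}S_i$) shows this is a direct sum of copies of $S_i$, so Lemma \ref{2.4}(1) kills both $\Hom_{\Lambda}$ and $\Ext_{\Lambda}^{1}$ against $\Lambda$, giving $\Hom_{\Lambda}(I_i,T)^{\ast}\cong T^{\ast}$.

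For (4), the first assertion is immediate from the central SES; if the inclusion is strict then $\Tor_{1}^{\Lambda}(T,S_i)\ne 0$, so Lemma \ref{2.9}(3) forces $TI_i=T$, and since $\Hom_{\Lambda}(I_i,T)/T$ is a sum of $S_i$'s and therefore annihilated by $I_i$, the sandwich $T=TI_i\subseteq\Hom_{\Lambda}(I_i,T)\cdot I_i\subseteq T$ collapses to $\Hom_{\Lambda}(I_i,T)\cdot I_i=T$. For (5), the case $\Hom_{\Lambda}(I_i,T)=T$ is trivial; otherwise $T\otimes_{\Lambda}S_i=0$ gives $\Ext_{\Lambda}^{j}(I_i,T)=0$ for all $j\ge 1$ (using $\Ext_{\Lambda}^{1}(I_i,T)\cong\Ext_{\Lambda}^{2}(S_i,T)\cong T\otimes_{\Lambda}S_i$ by Lemma \ref{3.4}, plus $\projdim I_i\le 1$), so Proposition \ref{2.10}(2) applied to the tilting $\Lambda$-module $I_i$ (Propositions \ref{3.2} and \ref{3.3} supply $\End_{\Lambda}(I_i)=\Lambda$) with $V=T$ yields both the tilting property of $\Hom_{\Lambda}(I_i,T)$ and the identification of its endomorphism ring. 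For (6), non-projectivity of $T$ forces the minimal projective resolution $0\to Q_1\to Q_0\to T\to 0$ to have $Q_1\ne 0$; injectivity of $P_n$ rules out $P_n$ as a summand of $Q_1$ (such a summand would split off in $Q_0$, contradicting Lemma \ref{2.9}(1)), so some $P_k$ with $1\le k\le n-1$ appears in $Q_1$, and then $\Tor_{1}^{\Lambda}(T,S_k)\ne 0$ produces the strict inclusion $T\subsetneq\Hom_{\Lambda}(I_k,T)$ via the central SES. The principal obstacle is establishing the embedding $\Hom_{\Lambda}(I_i,T)\hookrightarrow T^{\ast\ast}$, which rests on the identification $\Hom_{\Lambda}(I_i,P)\cong P$ for projective $P$ and thus on the Auslander-algebra-specific vanishing $\Hom_{\Lambda}(S_i,\Lambda)=\Ext_{\Lambda}^{1}(S_i,\Lambda)=0$ from Lemma \ref{2.4}(1); once this is secured, (2) is routine from $\gldim\Lambda\le 2$ and (4)-(6) become bookkeeping with the central SES and the structure of the minimal projective resolution of $T$.
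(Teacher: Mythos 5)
Your proof is correct, and for parts (1)--(5) it follows essentially the same route as the paper: the evaluation embedding $T\hookrightarrow T^{\ast\ast}$ into a projective module, the central short exact sequence $0\to T\to\Hom_{\Lambda}(I_i,T)\to\Ext^1_{\Lambda}(S_i,T)\to0$ with third term a sum of copies of $S_i$, the identification of double duals via $\Ext^{j}_{\Lambda}(S_i,\Lambda)=0$ for $j\neq2$, and the appeal to Lemma \ref{3.4}, Lemma \ref{2.9}(3) and Proposition \ref{2.10}(2) for (4) and (5). The genuine divergence is in (6): the paper picks the index $i$ by choosing a simple submodule $S_i$ of $T^{\ast\ast}/T$ (all of whose composition factors have projective dimension $2$ by Lemma \ref{2.7}, hence avoid $S_n$) and then reads off $\Ext^1_{\Lambda}(S_i,T)\neq0$ from the sequence $0\to T\to T^{\ast\ast}\to T^{\ast\ast}/T\to0$; you instead locate a summand $P_k$ with $k\le n-1$ in the first syzygy $Q_1$ of $T$ (excluding $P_n$ via its injectivity and Lemma \ref{2.9}(1)) and deduce $\Tor_1^{\Lambda}(T,S_k)\neq0$, hence $\Ext^1_{\Lambda}(S_k,T)\neq0$ by Lemma \ref{3.4}. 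Both are valid; your version trades the analysis of $T^{\ast\ast}/T$ for the standard fact that $\Tor_1^{\Lambda}(T,S_k)\neq0$ exactly when $P_k\in\add Q_1$, which here again rests on $(\add Q_0)\cap(\add Q_1)=0$ from Lemma \ref{2.9}(1) --- a step you should state explicitly, since $Q_1\otimes_{\Lambda}S_k\neq0$ alone does not give nonvanishing of the kernel. Your justification of (1) via the socle of projectives lying in $\add S_n$ (dominant dimension plus $P_n$ being the unique projective-injective) is also a harmless variant of the paper's direct use of $\Hom_{\Lambda}(S_i,\Lambda)=0$ from Lemma \ref{2.4}(1).
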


\begin{proof} We firstly note by Lemma \ref{2.6} that $T^{\ast \ast}$ is a projective
$\Lambda$-module. By Lemma \ref{2.4}, we have
 $\Ext_{\Lambda}^{j} (S_{i}, \Lambda) = 0=
 \Ext_{\Lambda}^{j} (S_{i}, T^{\ast\ast})$ for $j\neq2$.
These facts will be used freely in this proof.

(1)  By Lemma \ref{2.7}, we have an exact sequence
\begin{equation}\label{3.8.1}
0 \rightarrow T \xrightarrow{\varphi_{T}} T^{\ast \ast}
 \rightarrow T^{\ast \ast} / T \rightarrow 0.
 \end{equation}
Applying the functor $\Hom_{\Lambda}(S_{i},-)$,
one gets $\Hom_{\Lambda} (S_{i},T) = 0$.

(2) Applying $\Hom_{\Lambda} (-, T^{\ast
 \ast})$ to the short exact sequence
$0 \rightarrow I_{i} \rightarrow \Lambda \rightarrow S_{i}
\rightarrow 0,$
we have an exact sequence
$0 = \Hom_{\Lambda} (S_{i},T^{\ast \ast})
 \rightarrow \Hom_{\Lambda} (\Lambda , T^{\ast \ast}
) \rightarrow \Hom_{\Lambda} (I_{i} , T^{\ast \ast}
) \rightarrow \Ext_{\Lambda}^{1} (S_{i} , T^{\ast
 \ast}) = 0.$
Thus $\Hom_{\Lambda} (I_{i} , T^{\ast \ast}
) \cong T^{\ast \ast}$ is a projective $\Lambda$-module. Then
applying the functor $\Hom_{\Lambda}(I_{i},-)$ to the sequence \eqref{3.8.1}, one gets that
 $\Hom_{\Lambda} (I_{i},T)$ is a submodule of the
 projective $\Lambda$-module $\Hom_{\Lambda} (
 I_{i},T^{\ast \ast})$.
Since $\gldim \Lambda \leq 2$, any submodule of a projective module has projective dimension at most $1$.

(3) Applying $\Hom_{\Lambda}
 (-,T)$ to the exact sequence $0 \rightarrow I_{i} \rightarrow \Lambda \rightarrow S_{i}
\rightarrow 0$ of $(\Lambda,\Lambda)$-bimodules, we obtain an exact
sequence
\begin{equation}\label{3.8.2}
0\rightarrow
 \Hom_{\Lambda} (\Lambda, T) \rightarrow
 \Hom_{\Lambda} (I_{i},T)
\rightarrow\Ext_{\Lambda}^{1} (S_{i},T) \rightarrow 0 \\
 \rightarrow \Ext_{\Lambda}^{1} (I_{i},T)
 \rightarrow \Ext_{\Lambda}^{2} (S_{i},T)
 \rightarrow 0
 \end{equation}
of $\Lambda$-modules by (1). Since the $\Lambda^{\op}$-module $S_i$
is annihilated by $I_i$, the $\Lambda$-module
$\Ext_{\Lambda}^{1}(S_i,T)$ is annihilated by $I_i$ and hence
isomorphic to $S_i^m$ for some $m\ge0$. Hence \eqref{3.8.2} gives an
exact sequence $0 \rightarrow T \rightarrow \Hom_{\Lambda} (I_{i},T)
\rightarrow S_{i}^{m} \rightarrow 0$. Applying $(-)^{\ast} =
\Hom_{\Lambda} (-,\Lambda)$, we obtain an exact sequence $0 =
(S_{i}^{m})^{\ast} \rightarrow \Hom_{\Lambda}
 (I_{i},T)^{\ast} \rightarrow T^{\ast} \rightarrow
 \Ext_{\Lambda}^{1} (S_{i}^{m}, \Lambda) = 0$.
In particular, we have $T^{\ast \ast} \cong \Hom_{\Lambda}
(I_{i},T)^{\ast \ast}$ and the commutative diagram
\[
\xymatrix{
0 \ar[r] & T\ar[r]\ar[d]^{\varphi_{T}} &
\Hom_{\Lambda} (I_{i},T) \ar[r]\ar[d]^{\varphi_{\Hom_{\Lambda}(I_{i},T)}}\ar[r]&S_i^m\ar[r]&0. \\
&T^{\ast \ast} \ar@{=}[r]& \Hom_{\Lambda} (I_{i},T)^{\ast \ast}
}\]
By (2) and Lemma \ref{2.7}, $\varphi_{\Hom_{\Lambda} (I_{i},T)}$ is
 a monomorphism and hence (3) follows.

(4) The former assertion is immediate from the exact sequence \eqref{3.8.2}.
Since $\Ext_{\Lambda}^{1} (S_{i},T)\cong S_i^m$ is annihilated by $I_{i}$, we have $TI_{i} \subseteq
 \Hom_{\Lambda} (I_{i},T) I_{i} \subseteq T$.
For the latter assertion, notice that $\Tor_1^{\Lambda}(T,S_i)\cong\Ext_{\Lambda}^{1} (S_{i},T)\neq0$ by Lemma \ref{3.4}.
Since $T / TI_{i} \cong T \otimes_{\Lambda} S_{i} = 0$ holds by
Lemma \ref{2.9}(3), we obtain $\Hom_{\Lambda} (I_{i} , T) I_{i} =
T$.

(5) If $T = \Hom_{\Lambda} (I_{i} , T)$, then it is obvious. Assume that $T \neq
 \Hom_{\Lambda} (I_{i},T)$.
By (2) and Propositions \ref{3.3}(2) and \ref{2.10}(2), it suffices
to prove that
 $\Ext_{\Lambda}^{j} (I_{i}, T) = 0$ for any $j > 0$.
We only have to consider the case $j=1$ since $\projdim I_{i} \leq 1$.
We have $\Ext_{\Lambda}^{1} (S_{i},T) \neq 0$ by (4), and hence
 $\Ext_{\Lambda}^{1}(I_{i},T) \cong \Ext_{\Lambda}^{2} (S_{i},T) = 0$
 holds by Lemma \ref{3.4}. Thus (5) follows.

(6) By our assumption and Lemma \ref{2.6}, $T \neq T^{\ast \ast}$
 holds. By Lemma \ref{2.7} and Proposition
 \ref{3.1}, we can take a simple submodule $S_{i}$ of
 $T^{\ast \ast} / T$ for some $1\le i\le n-1$.
Applying $\Hom_{\Lambda} (S_{i} , -)$ to the
exact sequence \eqref{3.8.1}, we get an exact sequence
$0 = \Hom_{\Lambda} (S_{i}, T^{\ast \ast})
 \rightarrow \Hom_{\Lambda} (S_{i}, T^{\ast \ast} / T
) \rightarrow \Ext_{\Lambda}^{1} (S_{i}, T)$.
Thus $\Ext_{\Lambda}^{1} (S_{i},T)
\neq 0$ by our choice of $S_{i}$. Thus $\Hom_{\Lambda}
(I_{i}, T ) / T \cong \Ext_{\Lambda}^{1} (S_{i},T) \neq 0$ holds by
(4), and we have $T \subsetneq \Hom_{\Lambda} (I_{i},T)$.
\end{proof}

\begin{lemma}\label{3.9}
Let $T \in \langle I_{1} , \ldots,I_{n-1} \rangle$, and let $f_{T}:T
\rightarrow \Lambda$ be a natural inclusion. Then in the following
commutative diagram, $\varphi_{\Lambda}$
 and $f^{\ast \ast}_{T}$ are isomorphisms.
 \[\xymatrix{
  T  \ar[r]^{\varphi_{T}}\ar[d]^{f_{T}}& T^{\ast \ast} \ar[d]^{f^{\ast \ast}_{T}} \\
  \Lambda  \ar[r]^{\varphi_{\Lambda}}&  \Lambda^{\ast \ast}
}\]
\end{lemma}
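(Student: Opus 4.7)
The plan is to establish the stronger statement that $f_T^{\ast}\colon \Lambda \cong \Lambda^{\ast} \to T^{\ast}$ is already an isomorphism. Once this is known, both assertions of the lemma fall out immediately: dualising an isomorphism yields an isomorphism, so $f_T^{\ast\ast}$ is an isomorphism, and $\varphi_\Lambda\colon\Lambda\to\Lambda^{\ast\ast}$ is the standard identification of the regular bimodule with its double dual, which is always an isomorphism for any ring. Hence the entire lemma reduces to proving that $f_T^{\ast}$ is an isomorphism.

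To carry this out, I would apply $\Hom_\Lambda(-,\Lambda)$ to the short exact sequence $0 \to T \xrightarrow{f_T} \Lambda \to \Lambda/T \to 0$. Because $\Lambda$ is projective, $\Ext^{i}_\Lambda(\Lambda,\Lambda)=0$ for $i>0$, so the long exact sequence collapses to
\[
0 \to \Hom_\Lambda(\Lambda/T,\Lambda) \to \Lambda \xrightarrow{f_T^{\ast}} T^{\ast} \to \Ext^{1}_\Lambda(\Lambda/T,\Lambda) \to 0.
\]
Hence it suffices to verify that $\Hom_\Lambda(\Lambda/T,\Lambda)=0=\Ext^{1}_\Lambda(\Lambda/T,\Lambda)$.

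The key computation is to identify the composition factors of $\Lambda/T$. Writing $T = I_{i_1}I_{i_2}\cdots I_{i_k}$ with each $i_j\in\{1,\ldots,n-1\}$, I would use the descending filtration $\Lambda \supset I_{i_1} \supset I_{i_1}I_{i_2} \supset \cdots \supset T$. Each successive quotient satisfies $I_{i_1}\cdots I_{i_{j-1}}/I_{i_1}\cdots I_{i_j} \cong (I_{i_1}\cdots I_{i_{j-1}})\otimes_\Lambda S_{i_j}$, which is annihilated on the right by $I_{i_j}$, hence has all its composition factors isomorphic to $S_{i_j}$. Thus every composition factor of $\Lambda/T$ lies in $\{S_1,\ldots,S_{n-1}\}$. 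Each such $S_i$ corresponds via the Auslander correspondence to the non-projective indecomposable $R$-module $X_i = K[x]/(x^i)$, so Lemma \ref{2.4}(1) yields $\Hom_\Lambda(S_i,\Lambda)=\Ext^{1}_\Lambda(S_i,\Lambda)=0$. A routine induction on length using the long exact sequence of $\Ext$ propagates these vanishings to give $\Hom_\Lambda(\Lambda/T,\Lambda)=\Ext^{1}_\Lambda(\Lambda/T,\Lambda)=0$, finishing the argument.

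The only real technical point is the identification of the successive quotients as $I_{i_j}$-annihilated modules, which boils down to the bimodule isomorphism $\Lambda/I_{i_j}\cong S_{i_j}$ recorded in Proposition \ref{3.1} and the right-exactness of $-\otimes_\Lambda S_{i_j}$. Once this is in place, the proof is essentially a two-line Ext computation combined with the observation that all relevant simples have projective dimension $2$.
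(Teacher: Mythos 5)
Your proof is correct and follows essentially the same route as the paper: apply $(-)^{\ast}$ to $0\to T\to\Lambda\to\Lambda/T\to0$ and kill the two outer terms using the vanishing of $\Hom_{\Lambda}(S_i,\Lambda)$ and $\Ext^1_{\Lambda}(S_i,\Lambda)$ from Lemma \ref{2.4}, after noting that every composition factor of $\Lambda/T$ is some $S_i$ with $1\le i\le n-1$. The paper merely asserts this last composition-factor fact, whereas you supply the filtration $\Lambda\supset I_{i_1}\supset\cdots\supset T$ with quotients annihilated by the respective $I_{i_j}$ --- a worthwhile detail, but the argument is the same.
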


\begin{proof} Since $\Lambda$ is projective, it is clear that
$\varphi_{\Lambda}$ is
 an isomorphism.

Any composition factor of the $\Lambda$-module $\Lambda / T$ has a
form $S_{i}$ for some $1 \leq i \leq n-1$. By Lemma \ref{2.4}, we
have $\Ext_{\Lambda}^{j} (\Lambda / T, \Lambda) = 0$ for $j\neq2$.
Applying $(-)^{\ast} = \Hom_{\Lambda} (-,\Lambda)$ to the exact
sequence $0 \rightarrow T \xrightarrow{f_{T}} \Lambda
 \rightarrow \Lambda / T \rightarrow 0,$
we have an exact sequence $ 0 = (\Lambda / T)^{\ast}
\rightarrow \Lambda^{\ast}
 \xrightarrow{f_{T}^{\ast}}T^{\ast} \rightarrow
 \Ext_{\Lambda}^{1} (\Lambda / T , \Lambda) = 0$.
Thus $f_{T}^{\ast}$ is an isomorphism
 and hence $f_{T}^{\ast \ast}$ is an isomorphism.
\end{proof}

Now we are in a position to state our first main result in this
section.

\begin{theorem}\label{3.10}
Let $\Lambda$ be the Auslander algebra of $K[x]/(x^n)$. Then
\begin{enumerate}[\rm(1)]
\item For any tilting $\Lambda$-module $T$, there exists $U \in \langle I_{1} , \ldots ,I_{n-1} \rangle$
such that $\add T = \add U$.
\item If two elements $T$ and $U$ in $\langle I_{1} , \ldots ,I_{n-1} \rangle$ are
isomorphic as $\Lambda$-modules, then $T=U$.
\item The set $\tilt\Lambda$ is given by $\langle I_1,\ldots,I_{n-1}\rangle$.
\item The statements $(1)$, $(2)$ and $(3)$ hold also for $\Lambda^{\op}$-modules.
\end{enumerate}
\end{theorem}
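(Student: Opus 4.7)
The plan is to prove (1) by induction, (2) by an endomorphism-ring argument, deduce (3) formally, and obtain (4) by the left-right symmetry of the setup.

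For (1), I may assume $T$ is basic (this does not change $\add T$) and induct on $\dim_K(T^{\ast\ast}/T)$, which is a finite nonnegative integer since $T^{\ast\ast}$ is a projective $\Lambda$-module by Lemma \ref{2.6} containing $T$ by Lemma \ref{2.7}. When this dimension is zero, $T\cong T^{\ast\ast}$ is a basic projective tilting module and hence $T\cong\Lambda$, so the empty product works. Otherwise $T$ is nonprojective, and Proposition \ref{3.8}(6) furnishes $i\in\{1,\ldots,n-1\}$ with $T\subsetneq T':=\Hom_{\Lambda}(I_i,T)$; by parts (3) and (5) of Proposition \ref{3.8}, $T'$ is a basic tilting module (basic because $\End_{\Lambda}(T')\cong\Lambda$) with $T'^{\ast\ast}=T^{\ast\ast}$, so $\dim_K(T'^{\ast\ast}/T')<\dim_K(T^{\ast\ast}/T)$ and the induction hypothesis yields $U'\in\langle I_1,\ldots,I_{n-1}\rangle$ with $T'\cong U'$. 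Proposition \ref{3.8}(4) identifies $T$ with $T'I_i$, and since $T'I_i\ne T'$, Proposition \ref{3.6}(2) rewrites this as $T=T'\otimes_{\Lambda}I_i$. The condition ``$MI_i=M$'' is equivalent to $M\otimes_{\Lambda}S_i=0$ and is therefore isomorphism-invariant, giving $U'I_i\ne U'$ and $U:=U'I_i\cong U'\otimes_{\Lambda}I_i\cong T'\otimes_{\Lambda}I_i=T$; this $U\in\langle I_1,\ldots,I_{n-1}\rangle$ completes the inductive step.

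For (2), I exploit the fact extracted from the proof of Lemma \ref{3.9} that for every $T\in\langle I_1,\ldots,I_{n-1}\rangle$ the restriction $f_T^{\ast}:\Lambda^{\ast}\to T^{\ast}$ is an isomorphism, so $\Hom_{\Lambda}(T,\Lambda)\cong\Lambda$ as a left $\Lambda$-module generated by the inclusion $T\hookrightarrow\Lambda$; equivalently, every $\Lambda$-homomorphism $T\to\Lambda$ is given by left multiplication by a unique element of $\Lambda$. Given an isomorphism $\phi:T\to U$ with $T,U\in\langle I_1,\ldots,I_{n-1}\rangle$, composing $\phi$ with the inclusion $U\hookrightarrow\Lambda$ writes $\phi$ as left multiplication by some $\lambda\in\Lambda$, and $\phi^{-1}$ as left multiplication by some $\mu\in\Lambda$. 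Their composite $\mu\lambda$ acts as the identity on the faithful module $T$, forcing $\mu\lambda=1$, so $\lambda$ is a unit in $\Lambda$. Since $T$ is a two-sided ideal, both $\lambda T\subseteq T$ and $\lambda^{-1}T\subseteq T$ hold, whence $T=\lambda(\lambda^{-1}T)\subseteq\lambda T\subseteq T$ gives $U=\lambda T=T$.

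Part (3) is a formal consequence: Theorem \ref{3.7} shows $\langle I_1,\ldots,I_{n-1}\rangle\subseteq\tilt\Lambda$, and parts (1) and (2) together say that this inclusion is a bijection. For (4), the identical argument transfers to $\Lambda^{\op}$ via the left-right symmetry already exhibited in Theorem \ref{3.7} and Proposition \ref{3.3}: the $I_i$ are simultaneously tilting over $\Lambda$ and $\Lambda^{\op}$, and the left-module analogs of Proposition \ref{3.8} and Lemma \ref{3.9} are obtained by transposing the roles of $P_i$ and $P^i$ throughout. The principal obstacle is the computation in (2); once $\Hom_{\Lambda}(T,\Lambda)$ is pinned down as a free left $\Lambda$-module of rank one, the uniqueness statement reduces to the observation that left multiplication by a unit of $\Lambda$ preserves the left ideal $T$.
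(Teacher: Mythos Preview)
Your proof is correct and follows essentially the same approach as the paper: for (1) both arguments climb from $T$ to $T^{\ast\ast}$ via iterated applications of $\Hom_\Lambda(I_i,-)$ using Proposition~\ref{3.8} and then descend by right-multiplying by the $I_i$'s (you phrase this as induction on $\dim_K(T^{\ast\ast}/T)$ while the paper builds the finite chain explicitly), and for (2) both extract from Lemma~\ref{3.9} that every map $T\to\Lambda$ is left multiplication by an element of $\Lambda$ and then use that $T$ is a two-sided ideal. Your version of (2) is slightly more streamlined in that it bypasses the explicit double-dual diagram, but the content is the same.
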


\begin{proof}
(1) By Proposition \ref{3.8}(3)(4)(5)(6), there exists a finite
sequence of tilting
 $\Lambda$-modules
\[T = T_{0} \subsetneq T_{1} \subsetneq \cdots \subsetneq T_{m} = T^{\ast \ast}\]
and $i_1,\ldots,i_m\in\{1,\ldots,n-1\}$ such that $T_{k+1} = \Hom_{\Lambda} (I_{i_{k+1}} , T_{k})$
 and $T_{k} = T_{k+1} I_{i_{k+1}}$ for any $0\le k\le m-1$.
In particular, we have $T = T_{1}I_{i_{1}} = T_{2}I_{i_{2}}I_{i_{1}}
= \cdots = T_{m}I_{i_{m}} \cdots I_{i_{1}}$.
Because $T^{\ast \ast}$ is a projective tilting $\Lambda$-module by
 Lemma \ref{2.6}, we have $\add T_{m} = \add
 \Lambda$. Thus $\add T = \add  U$ holds for $U:= I_{i_{m}} \cdots I_{i_{1}}\in
 \langle I_{1} , \ldots ,I_{n-1} \rangle$.

(2) For $T,U \in \langle I_{1} , \ldots ,I_{n-1}
 \rangle$, assume that there exists a $\Lambda$-module isomorphism $g:T \cong U$.

By Lemma \ref{3.9}, there exists a commutative diagram
\[\xymatrix{
&T\ar[r]^g_\sim\ar[d]^{\varphi_T}\ar[dl]_{f_T}&U\ar[d]^{\varphi_T}\ar[dr]^{f_U}\\
\Lambda&T^{\ast\ast}\ar[r]_{g^{\ast\ast}}^\sim\ar[l]^{e_T}&U^{\ast\ast}\ar[r]_{e_U}&\Lambda
}\]
where $e_{T} := \varphi_{\Lambda}^{-1}f_{T}^{\ast \ast}$ and $e_{U} :=
 \varphi_{\Lambda}^{-1}f_{U}^{\ast \ast}$ are isomorphisms.
Putting $h = e_{U}g^{\ast \ast}e_{T}^{-1}:\Lambda \rightarrow
 \Lambda$, we have a commutative diagram
\[\xymatrix{T\ar[r]^g\ar[d]^{f_T}&U\ar[d]^{f_U}\\
\Lambda\ar[r]^\sim_h&\Lambda. }\] Since $h$ is given by the left
multiplication of an invertible element $x\in\Lambda$, so is $g$.
Since $T$ is an ideal of $\Lambda$, we have $U=xT=T$.

(3) This is a consequence of (1), (2) and Theorem \ref{3.7}.

(4) One can prove it similarly to (1), (2) and (3).
\end{proof}

The mutations of tilting $\Lambda$-modules are described by the
following result. Notice that we use the structure of
$\Lambda^{\op}$-modules when we consider mutations of
$\Lambda$-modules.

\begin{proposition}\label{3.11}
Let $T \in \langle I_{1} , \ldots ,I_{n-1}
\rangle$.
 \begin{enumerate}[\rm(1)]

\item For each $1 \leq i \leq n-1$, precisely one of the following
statements {\rm(a)} and {\rm(b)} holds.
\begin{itemize}

  \item[\rm(a)] $I_iT\neq T$ and $\Hom_{\Lambda^{\op}}(I_{i},T) = T$ hold,
  and $I_{i}T = I_{i} \otimes_{\Lambda} T$ is a left mutation of $T$ at $e_iT$.

  \item[\rm(b)] $I_{i}T = T$ and $\Hom_{\Lambda^{\op}}(I_{i}, T)\neq T$ hold,
  and $\Hom_{\Lambda^{\op}} (I_{i}, T)$ is a right mutation of $T$ at $e_iT$.
\end{itemize}

\item All mutations of $T$ in $\tilt\Lambda$ are of the form
       $(1)$. In particular, $T$ has precisely $n-1$
      mutations in $\tilt\Lambda$.

\item The corresponding statements to $(1)$ and $(2
     )$ hold for $\Lambda^{\op}$-modules.
\end{enumerate}
\end{proposition}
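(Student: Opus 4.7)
My plan is to apply Propositions~\ref{3.6} and \ref{3.8} to $T$ viewed as a basic tilting $\Lambda^{\op}$-module; this is legitimate because Theorem~\ref{3.10}(3)(4) identifies both $\tilt\Lambda$ and $\tilt\Lambda^{\op}$ with the same set $\langle I_1,\ldots,I_{n-1}\rangle$ of two-sided ideals of $\Lambda$, so the entire $\Lambda$-side development of Section~3 applies verbatim on the $\Lambda^{\op}$-side.

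For part~(1), I first establish the dichotomy between (a) and (b). The $\Lambda^{\op}$-analogue of Lemma~\ref{3.4} gives $\Ext^2_{\Lambda^{\op}}(S_i,T)\cong T/I_iT$ and $\Ext^1_{\Lambda^{\op}}(S_i,T)\cong\Tor_1^{\Lambda^{\op}}(T,S_i)$, with precisely one of them vanishing by Lemma~\ref{2.9}(3). The $\Lambda^{\op}$-analogue of Proposition~\ref{3.8}(4) identifies $\Ext^1_{\Lambda^{\op}}(S_i,T)$ with $\Hom_{\Lambda^{\op}}(I_i,T)/T$, so (a) and (b) become complementary. In case (a), the isomorphism $I_iT\cong I_i\otimes_\Lambda T$ follows by tensoring $0\to I_i\to\Lambda\to S_i\to 0$ with $T$ on the right and using the corresponding Tor-vanishing; in case (b), the $\Lambda^{\op}$-analogue of Proposition~\ref{3.8}(4) yields $I_i\cdot\Hom_{\Lambda^{\op}}(I_i,T)=T$. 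In both cases the candidate ideal is a product of the generators, hence lies in $\langle I_1,\ldots,I_{n-1}\rangle=\tilt\Lambda$ by Theorem~\ref{3.10}(3).

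It remains to promote these strict ideal inclusions to mutations in $\tilt\Lambda$. The quotient $T/I_iT$ in case (a) is $S_i$-isotypic as a left $\Lambda$-module (being annihilated by the left ideal $I_i$), and combining this with the basicness of $T$ and $I_iT$, both having $n$ indecomposable right summands, forces them to differ in exactly one indecomposable right $\Lambda$-summand. Case (b) is symmetric via $\Hom_{\Lambda^{\op}}(I_i,T)/T\cong\Ext^1_{\Lambda^{\op}}(S_i,T)$. For part~(2), the $n-1$ mutations constructed from $i\in\{1,\ldots,n-1\}$ are pairwise distinct because the left-isotype of $T/I_iT$ (resp.~of $\Hom_{\Lambda^{\op}}(I_i,T)/T$) records the index $i$. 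General $\tau$-tilting theory gives $T$ exactly $n$ mutations in $\sttilt\Lambda$, and the sole mutation that removes the projective-injective summand $P_n$ fails to be faithful and hence lies outside $\tilt\Lambda$; so the $n-1$ mutations of $T$ in $\tilt\Lambda$ are exactly those produced by our construction. Part~(3) is the dual, obtained by interchanging the roles of $\Lambda$ and $\Lambda^{\op}$ throughout.

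The principal difficulty is the covering-relation step in part~(1): passing from the strict ideal inclusion $I_iT\subsetneq T$ together with the left-module isotype of $T/I_iT$ to the precise conclusion that exactly one indecomposable \emph{right} $\Lambda$-summand changes. This requires careful bookkeeping of the right $\Lambda$-module decomposition of the ideal product $I_iT$, using the explicit description of indecomposable projectives in Proposition~\ref{3.1}.
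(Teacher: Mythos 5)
Your overall strategy coincides with the paper's: regard $T$ as a tilting $\Lambda^{\op}$-module via Theorem \ref{3.10}(4), apply Propositions \ref{3.6}(1) and \ref{3.8}(5) on that side to see that $I_iT$ and $\Hom_{\Lambda^{\op}}(I_i,T)$ are again tilting, and obtain the dichotomy between (a) and (b) from Lemma \ref{3.4} together with Lemma \ref{2.9}(3) and Proposition \ref{3.8}(4). One comment on the step you flag as the principal difficulty: it is in fact immediate. Since $e_jI_i=e_j\Lambda$ and $I_ie_j=\Lambda e_j$ for every $j\neq i$, the decompositions $I_iT=\bigoplus_{j}e_jI_iT$ and $\Hom_{\Lambda^{\op}}(I_i,T)=\bigoplus_j\Hom_{\Lambda^{\op}}(I_ie_j,T)$ have all summands except the $i$-th literally equal to $e_jT$; your route via the $S_i$-isotypy of $T/I_iT$ as a left module yields the same thing (it forces $e_j(T/I_iT)=0$ for $j\neq i$), so no further bookkeeping with Proposition \ref{3.1} is required. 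Your count in (2) using the $n$ mutations in $\sttilt\Lambda$ and the forced projective-injective summand $P_n$ is a correct variant of the paper's direct argument.

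The genuine gap is that the statement asserts $I_iT$ is a \emph{left} mutation and $\Hom_{\Lambda^{\op}}(I_i,T)$ a \emph{right} mutation, and your proposal never compares generation orders. Knowing that two tilting modules share all indecomposable summands but one tells you, via Theorem \ref{2.m}, that one of them is a left mutation of the other --- but not which one; the direction must be determined separately. The paper does this as follows: choosing finitely many generators $x_1,\dots,x_m$ of the ideal $I_i$, the module $I_iT=\sum_k x_kT$ is an epimorphic image of $T^m$ (each $t\mapsto x_kt$ is a homomorphism of right modules), so $\Fac(I_iT)\subset\Fac T$ and hence $I_iT\le T$; and in case (b), setting $U:=\Hom_{\Lambda^{\op}}(I_i,T)$, the identity $I_iU=T$ from Proposition \ref{3.8}(4) gives $T\in\Fac U$ and hence $T\le U$. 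Without some such argument the words ``left'' and ``right'' in (a) and (b) remain unproved, so you should add this comparison to complete the proof.
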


\begin{proof}
(1) Applying Proposition \ref{3.6}(2) and
Proposition \ref{3.8}(5) to the tilting $\Lambda^{\op}$-module $T$, we have
 that $I_{i}T$ and $\Hom_{\Lambda^{\op}} (I_{i} , T)$
 are tilting $\Lambda^{\op}$-modules with $\End_{\Lambda^{\op}} (
 I_{i}T) \cong \End_{\Lambda^{\op}} (T)\cong\End_{\Lambda^{\op}} (
 \Hom_{\Lambda^{\op}} (I_{i},T ))$.
Since $\End_{\Lambda^{\op}} (T)\cong
 \Lambda^{\op}$ holds by Theorem \ref{3.7}, we have that $I_{i}T$ and
 $\Hom_{\Lambda^{\op}} (I_{i},T)$ are tilting
 $\Lambda$-modules. Further we know that
\begin{equation*}
 I_{i}T  = \bigoplus_{j=1}^{n} e_{j} I_{i}T\ \mbox{ and }\
 \Hom_{\Lambda^{\op}} (I_{i}, T)  =
 \bigoplus_{j=1}^{n} \Hom_{\Lambda^{\op}} (I_{i}e_{j} , T).
\end{equation*}
Since $e_{j} I_{i} = e_{j} \Lambda$ and $I_{i}e_{j} = \Lambda e_{j}$
 hold for any $j \neq i$, the indecomposable direct summands of
 $I_{i}T$ (resp. $\Hom_{\Lambda^{\op}} (I_{i} , T)$)
 coincide with those of $T$ except one. By Theorem
 \ref{2.m}, $I_{i}T$ (resp. $\Hom_{\Lambda^{\op}}(I_{i} , T)$)
 is either isomorphic to $T$ or a mutation of $T$. We have
\begin{equation*}
\begin{aligned}
 I_{i}T \cong T  \quad \Longleftrightarrow \quad S_{i} \otimes_{\Lambda} T
 = 0 \quad & \Longleftrightarrow \quad \Hom_{\Lambda^{\op}} (
 S_{i} , T) = 0, \\
 \Hom_{\Lambda^{\op}} (I_{i} , T) \cong T \quad
 & \Longleftrightarrow \quad
 \Ext_{\Lambda^{\op}}^{1} (S_{i} , T) = 0
\end{aligned}
\end{equation*}
by Proposition \ref{3.8}. Thus precisely one of $S_{i} \otimes_{\Lambda}
T = 0$ and
 $\Tor_{1}^{\Lambda} (S_{i} , T) = 0$ holds by
 Proposition \ref{2.10}.

It remains to decide whether the mutation is left or right. We only
have
 to show $\Hom_{\Lambda^{\op}} (I_{i} , T) \geq T
 \geq I_{i}T$. Taking an epimorphism $\Lambda^{m} \rightarrow I_{i}$ of
 $\Lambda$-modules, we have an epimorphism $T^{m} \rightarrow I_{i}T$.
Thus, we have $T^{\bot} \supseteq (
 I_{i}T)^{\bot}$ and $T \geq I_{i}T$.
If $U := \Hom_{\Lambda^{\op}} (I_{i}, T)
 \supsetneq T$, then we have $I_{i}U = T$ by Proposition \ref{3.8}.
 Thus we have $\Hom_{\Lambda^{\op}} (I_{i}, T ) = U \geq T$.

(2) Any basic tilting $\Lambda$-module has precisely $n$
indecomposable direct summands. Since $P_{n}$ is injective by
Proposition \ref{3.1}, it is a direct summand of any tilting
$\Lambda$-module. Therefore the number of mutations of $T$ in
$\tilt\Lambda$ is at most $n-1$, while we have at least $n-1$
mutations in $\tilt\Lambda$ by (1).

(3) One can prove it similarly to (1) and (2).
\end{proof}

Immediately we have the following description of the Hasse quiver of
tilting $\Lambda$-modules.

\begin{corollary}\label{3.12}
The Hasse quiver of $\tilt\Lambda$ has the set
$\langle I_{1},\ldots ,I_{n-1}\rangle$ of vertices.
All arrows starting or ending at
$T\in\langle I_{1} , \ldots ,I_{n-1}\rangle$ are given by
\begin{eqnarray*}
\mu_i(T):=\Hom_{\Lambda^{\op}} (I_{i}  ,T)  \longrightarrow T
 &  \mbox{ if } & T = I_{i}T, \\
  T  \longrightarrow \mu_i(T):=I_{i}T & \mbox{ if } &T \neq I_{i}T
\end{eqnarray*}
for each $1\le i\le n-1$, where $\mu_i(T)$ is the mutation of $T$
at the direct summand $e_iT$ (Definition \ref{mutation}).
Thus the number of arrows starting or ending at $T$ is precisely $n-1$.
\end{corollary}

We have shown that the set $\tilt\Lambda$ is
given by $\langle I_{1} , \ldots , I_{n-1} \rangle$.
In the following we give an explicit description of this set.
Let us start with the following elementary observation.

\begin{proposition}\label{3.14}
Let $A$ be a basic finite dimensional algebra, $\{e_{1} ,
\ldots , e_{n} \}$ a complete set of orthogonal primitive idempotents of
 $A$, and $S_1,\ldots,S_n$ the corresponding simple $A$-modules.
For a subset $J$ of $\{ 1, \ldots , n \}$, we put
\[
  e_{J} = \sum_{i \in J} e_{i} \quad \text{and}
 \quad I_{J} = A(1- e_{J})A.
\]
Then for any $X \in \mod A$, we have that $XI_{J}$ is the
minimum amongst submodules $Y$ of $X$ satisfying the following
condition:
\begin{itemize}
\item[$(\sharp)$] Any composition factor of $ X/Y$ has the form
$S_{i}$ for some $i \in J$.
\end{itemize}
\end{proposition}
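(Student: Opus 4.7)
My plan is to translate condition $(\sharp)$ into an elementary annihilator condition on $Y$, and then to identify $XI_J$ as the extremal submodule in the resulting family.

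The preliminary step I would carry out is the following criterion: for any finite-length right $\Lambda$-module $M$, every composition factor of $M$ lies in $\{S_i : i \in J\}$ if and only if $M\cdot e_J=0$. The ``only if'' direction is immediate from $S_k e_J=0 \iff k\in J$, and the ``if'' direction proceeds by induction on composition length, using that the full subcategory of $\mod\Lambda$ consisting of modules annihilated on the right by the idempotent $e_J$ is closed under submodules, quotients, and extensions. Applied to $M=X/Y$, this translates condition $(\sharp)$ into the single inclusion $Xe_J\subseteq Y$, which, since $Y$ is a $\Lambda$-submodule, is further equivalent to $Xe_J\Lambda\subseteq Y$. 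Hence the family of submodules of $X$ satisfying $(\sharp)$ is an interval in the submodule lattice of $X$, closed under arbitrary intersections and sums, so possesses unique extremal elements of each kind.

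It then remains to identify $XI_J$ with the extremal element claimed by the proposition. This comes down to a direct computation using the definition $I_J = \Lambda(1-e_J)\Lambda$ together with the orthogonal decompositions $\Lambda = (1-e_J)\Lambda \oplus e_J\Lambda$ and $\Lambda = \Lambda(1-e_J) \oplus \Lambda e_J$ coming from the complementary idempotents $1-e_J = \sum_{i\in J}e_i$ and $e_J = \sum_{j\notin J}e_j$. From these one can rewrite $XI_J$ in a form directly comparable to $Xe_J\Lambda$ as produced in the translation step, match it against the extremal element of the interval, and conclude.

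The main obstacle I anticipate is precisely the careful bookkeeping between the complementary idempotents $e_J$ and $1-e_J$ and between the two-sided ideals $\Lambda e_J\Lambda$ and $\Lambda(1-e_J)\Lambda$ that they generate: the annihilator condition governing $(\sharp)$ is phrased in terms of $e_J$, whereas $I_J$ is generated by the complementary idempotent $1-e_J$. Tracking these two facets in parallel, and verifying on the level of generators $xe_i$ of $X$ which ones are killed in the passage to the quotient $X/XI_J$, is the crux of the proof; once established, the extremality assertion follows immediately from the interval structure obtained in the translation step.
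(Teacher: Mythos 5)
Your preliminary translation step is sound, and it is in substance the same mechanism as the paper's own proof: the paper identifies $XI_J=X(1-e_J)\Lambda$ with the trace $\sum_f \Im f$ over $f\in\Hom_\Lambda((1-e_J)\Lambda,X)$ and rewrites $(\sharp)$ as the vanishing of $\Hom_\Lambda((1-e_J)\Lambda,X/Y)$, which is exactly your annihilator criterion in $\Hom$-language. The genuine problem lies in the step you defer to ``careful bookkeeping'': it cannot be carried out, because the two submodules you hope to match are genuinely different, and the statement as printed is internally inconsistent. Your criterion shows that $(\sharp)$ (with ``$i\in J$'') is equivalent to $Xe_J\Lambda\subseteq Y$, where $e_J=\sum_{k\notin J}e_k$; hence the admissible $Y$ form the interval from $Xe_J\Lambda$ up to $X$, whose \emph{maximum} is $X$ itself---never $XI_J$ unless $XI_J=X$. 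Meanwhile, with the printed definition $I_J=\Lambda(1-e_J)\Lambda$ one has $XI_J=\sum_{i\in J}Xe_i\Lambda$, built from the idempotent complementary to $e_J$. Already for $\Lambda=K\times K$, $J=\{1\}$ and $X=\Lambda$ this gives $XI_J=P_1$, while the admissible family for $(\sharp)$ is $\{P_2,\Lambda\}$: no decomposition $\Lambda=(1-e_J)\Lambda\oplus e_J\Lambda$ will reconcile $X(1-e_J)\Lambda$ with $Xe_J\Lambda$, since composition factors of $X/XI_J$ are the $S_k$ with $k\notin J$, the opposite of what $(\sharp)$ demands.

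What is actually true---and what the paper proves and later uses---is the statement with two compensating misprints corrected: $XI_J$ is the \emph{minimum} among submodules $Y$ such that every composition factor of $X/Y$ has the form $S_i$ with $i\notin J$ (equivalently, keep ``$i\in J$'' in $(\sharp)$ but read $I_J=\Lambda e_J\Lambda$). This is visible in the paper's own proof, whose final equivalence ``$Y\subset XI_J$'' should read ``$XI_J\subset Y$'', and in the applications in Proposition \ref{3.15}, where $I_{i,j}=\Lambda(1-e_i-e_j)\Lambda$ is generated by the idempotent complementary to $\{i,j\}$ and only the minimality statements $I_{i,j}\subseteq I_iI_j$ and $I_{i,j}\subseteq I_iI_jI_i$ are invoked. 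Note that your translation step already proves this corrected assertion in full: the equivalence $(\sharp)\Leftrightarrow Xe_J\Lambda\subseteq Y$ exhibits $Xe_J\Lambda$ as the minimum of the family, and that submodule is $XI_J$ under the corrected reading of $I_J$. So the missing ingredient is not idempotent bookkeeping but the recognition that the statement's ``maximum'' must be ``minimum'' and that $J$ must be exchanged with its complement in exactly one of the two places where it occurs; committed to that reading, your argument closes and coincides with the paper's trace argument.
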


\begin{proof}
Since $\Hom_{A}((1 - e_{J})A , X) \cong X
 (1 - e_{J})$, we have
\[
XI_{J}=X(1-e_{J})A= \sum_{ f \in \Hom_{A} ((1 -
 e_{J})A , X ) } \mathrm{Im}f.
\]
The condition $(\sharp)$ holds if and only
if $\Hom_{A} ((1 - e_{J})A, X/Y ) = 0$ holds if and only if
$\mathrm{Im}f \subseteq Y$ holds for any $f \in \Hom_{A} ((1-e_{J})A ,X)$
if and only if $ XI_{J}\subseteq Y$.
\end{proof}

We have the following relations for the multiplication of ideals $I_{1} , \ldots , I_{n-1}$.

\begin{proposition}\label{3.15} Let $I_i$ be the maximal ideal of $\Lambda$ as above. Then the following relations hold for any $1 \leq i,j \leq n-1$.
\begin{enumerate}[\rm(1)]

\item $I_{i}^{2} = I_{i}$.

\item If $ \left|  i-j \right| \geq 2$, then $I_{i} I_{j} = I_{j}
I_{i}$.

\item If $|i-j|=1$, then $I_{i} I_{j} I_{i} = I_{j} I_{i} I_{j}$.
 \end{enumerate}
\end{proposition}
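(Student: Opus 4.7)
All three relations flow from Proposition \ref{3.14}, whose proof actually establishes the following characterization: $TI_i$ is the smallest submodule $Y$ of $T$ such that $T/Y$ has all composition factors isomorphic to $S_i$, so $T/TI_i\cong S_i^{\dim\Hom_\Lambda(T,S_i)}$ is the largest $S_i$-semisimple quotient of $T$. Part (1) is then immediate from Proposition \ref{3.3}(1): since $\Hom_\Lambda(I_i,S_i)=0$, the largest $S_i$-semisimple quotient of $I_i$ vanishes, so $I_i/I_i^2=0$ and hence $I_i^2=I_i$.

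For (2), I plan to show that $I_iI_j=I_jI_i=I_i\cap I_j$ when $|i-j|\ge 2$. Because $i$ and $j$ are non-adjacent in the quiver, $\Ext^1_\Lambda(S_i,S_j)=\Ext^1_\Lambda(S_j,S_i)=0$. Applying $\Hom_\Lambda(-,S_i)$ to $0\to I_j\to \Lambda\to S_j\to 0$ yields $\dim\Hom_\Lambda(I_j,S_i)=1+\dim\Ext^1_\Lambda(S_j,S_i)=1$, so $I_j/(I_iI_j)\cong S_i$. The resulting short exact sequence $0\to S_i\to \Lambda/(I_iI_j)\to S_j\to 0$ then splits, giving $\Lambda/(I_iI_j)\cong S_i\oplus S_j$. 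By symmetry $\Lambda/(I_jI_i)\cong S_i\oplus S_j$, and using $I_i+I_j=\Lambda$ one also gets $\Lambda/(I_i\cap I_j)\cong S_i\oplus S_j$. Any two-sided ideal $I$ of $\Lambda$ with semisimple quotient $S_i\oplus S_j$ must contain $\rad\Lambda$ together with $e_k$ for every $k\ne i,j$; a dimension count then forces $I=\sum_{k\ne i,j}\Lambda e_k\Lambda$, which coincides with $I_i\cap I_j$ precisely because $|i-j|\ge 2$ implies every path between $i$ and $j$ factors through some $k\ne i,j$. Hence $I_iI_j=I_jI_i=I_i\cap I_j$.

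For (3), the braid relation $I_iI_{i+1}I_i=I_{i+1}I_iI_{i+1}$ is the main obstacle. The strategy is to iterate Proposition \ref{3.14}: both quotients $\Lambda/(I_iI_{i+1}I_i)$ and $\Lambda/(I_{i+1}I_iI_{i+1})$ are $\Lambda$-bimodules equipped with three-step filtrations whose successive semisimple subquotients can be computed from $\dim\Hom_\Lambda(I_i,S_{i+1})=1+\dim\Ext^1_\Lambda(S_i,S_{i+1})=2$, $\dim\Hom_\Lambda(I_{i+1},S_i)=2$, and further multiplicities read off from the Loewy structure of the projectives $P_k$ displayed in the paper. The hardest step, which I expect to be the real obstacle, is to construct an explicit isomorphism of $\Lambda$-bimodules $\Lambda/(I_iI_{i+1}I_i)\cong \Lambda/(I_{i+1}I_iI_{i+1})$. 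The defining relations of $\Lambda$---namely $a_kb_{k+1}=b_ka_{k-1}$ for $k\ge 2$ together with $a_1b_2=0$---should provide the exact compatibility needed to match the two bimodule filtrations; once this isomorphism is in place, equality of the two ideals follows by comparing kernels.
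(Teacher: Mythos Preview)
Your arguments for (1) and (2) are correct, though more laborious than the paper's. The paper's unifying device is the ideal $I_{i,j}:=\Lambda(1-e_i-e_j)\Lambda$. From Proposition~\ref{3.14} one sees immediately that every product of copies of $I_i$ and $I_j$ contains $I_{i,j}$, since the successive quotients involve only $S_i$ and $S_j$; so the entire computation can be carried out in the factor algebra $\Lambda/I_{i,j}$. When $|i-j|\ge 2$ this quotient is $K\times K$, and the images of $I_i$ and $I_j$ are the two coordinate ideals, whose product in either order vanishes, giving $I_iI_j=I_{i,j}=I_jI_i$. When $|i-j|=1$ the quotient is the four-dimensional algebra on the quiver $\xymatrix{i\ar@<.2em>[r]^a&j\ar@<.2em>[l]^b}$ with $ab=ba=0$, and a direct check in this tiny algebra shows that both triple products of the images of $I_i,I_j$ vanish, giving $I_iI_jI_i=I_{i,j}=I_jI_iI_j$.

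Your plan for (3) is where the real gap lies. You propose to establish a $(\Lambda,\Lambda)$-bimodule isomorphism $\Lambda/(I_iI_{i+1}I_i)\cong\Lambda/(I_{i+1}I_iI_{i+1})$ and then deduce equality of the ideals. It is true that such a bimodule isomorphism forces equality of the kernels (the image of $1$ must be a central unit in the target), but you have not constructed the isomorphism, and ``the relations should provide the exact compatibility'' is not an argument. Matching the subquotients of two filtrations does not by itself produce a bimodule isomorphism. The paper's reduction to $\Lambda/I_{i,j}$ bypasses this entirely: once you know both triple products contain $I_{i,j}$---which already follows from your own filtration argument, since all composition factors are $S_i$ or $S_{i+1}$---you are working in a four-dimensional algebra where the braid relation is a one-line verification.
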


\begin{proof}
(1) By Propositions \ref{3.14} and \ref{3.1}, $I_{i} = \Lambda (1-e_{i}) \Lambda$ holds.
Hence $I_{i}^{2} = \Lambda (1 - e_{i}) \Lambda(1 - e_{i}) \Lambda =
\Lambda (1-e_{i}) \Lambda = I_{i}$.

(2)(3) For $1 \leq i \neq j \leq n-1$, put $I_{i,j} = \Lambda ( 1 - e_{i} - e_{j})\Lambda$.
Removing all vertices except $i$ and $j$ from the quiver with relations
 of $\Lambda$, we have the quiver with relations of $\Lambda / I_{i,j}$.
In particular, if $\left| i - j \right| \geq 2$, then
$\Lambda/I_{i,j} \cong K \times K$. If $\left| i - j \right| = 1$,
then $\Lambda/I_{i,j}$ is given by the quiver
 $\xymatrix{i\ar@<.2em>[r]^a&j\ar@<.2em>[l]^b}$ with relations $ab = 0 = ba$ and hence
$\Lambda/I_{i,j}=\left[\begin{smallmatrix} i\\ &j\end{smallmatrix}\middle|
\begin{smallmatrix} &j\\ i\end{smallmatrix}\right]$.

We prove (2). By Proposition \ref{3.14}, $I_{i}I_{j} \supseteq I_{i,j}$.
Since $\Lambda / I_{i,j} \cong K \times K$, we have $I_{i}I_{j}/I_{i,j}=0$.
Hence $I_{i}I_{j} = I_{i,j}$ holds, and similarly we have $I_{j}I_{i} = I_{i,j}$.
Thus $I_{i}I_{j} = I_{i,j} = I_{j}I_{i}$.

We prove (3). By Proposition \ref{3.14}, $I_{i}I_{j}I_{i} \supseteq I_{i,j}$.
Since $\Lambda / I_{i,j}=\left[\begin{smallmatrix} i\\ &j\end{smallmatrix}\middle|
\begin{smallmatrix} &j\\ i\end{smallmatrix}\right]$, we have $I_{i}I_{j}I_{i}/I_{i,j}=0$.
Hence $I_{i}I_{j}I_{i} = I_{i,j}$ holds, and similarly we have
 $I_{j}I_{i}I_{j} = I_{i,j}$. Thus $I_{i}I_{j}I_{i} = I_{i,j} = I_{j}I_{i}I_{j}$.
\end{proof}

Now we recall some well-known properties of the symmetric groups. We
consider the action of $\mathfrak{S}_n$ on $\mathbb{R}^n$ given by
permuting the standard basis $e_1,\ldots,e_n$. Then $\mathfrak{S}_n$
acts on the subspace
\[V:=\{x_1e_1+\cdots+x_ne_n\in\mathbb{R}^n\mid\sum_{i=1}^nx_i=0\},\]
which has a basis $\alpha_i:=e_i-e_{i+1}$ with $1\le i\le n-1$.
Clearly the action of $\mathfrak{S}_n$ on $V$ is faithful, and we
have an injective homomorphism $\mathfrak{S}_n\to{\rm GL}(V)$ called the
\emph{geometric representation}.

Let $s_i$ be the transposition $(i,i+1)\in \mathfrak{S}_{n}$.
The following elementary fact  plays an important role in the proof of our main theorem.

\begin{proposition}\label{3.16}
Let $\mathfrak{S}_{n}$ be the symmetric
group of degree $n$ and $\mathfrak{S}_{n} \ni w$. Then we have the
following:
\begin{enumerate}[\rm(1)]
\item \cite[Theorem 3.3.1]{BjB} Any expression $s_{i_{1}}s_{i_{2}} \cdots
 s_{i_{l}}$ of $w$ can be transformed into a reduced expression of $w$
       by applying the following operations {\rm(a)}, {\rm(b)}, {\rm(c)} repeatedly.
\begin{itemize}
\item[\rm(a)] Remove $s_{i}s_{i}$ in the expression.

\item[\rm(b)] Replace $s_{i} s_{j}$ with $\left| i
- j\right| \geq 2$ by $s_{j} s_{i}$ in the expression.
\item[\rm(c)] Replace $s_{i}s_{j}s_{i}$ with $|i-j|=1$ by
       $s_{j}s_{i}s_{j}$ in the expression.
\end{itemize}
\item \cite[Theorem 3.3.1]{BjB} Every two reduced expressions of $w$ can be transformed into each
other by applying the operations {\rm(b)} and {\rm(c)} repeatedly.
\item If $w=s_{i_1}s_{i_2}\cdots s_{i_l}$ is a reduced expression, then $s_{i_1}\cdots s_{i_k}(\alpha_{i_{k+1}})$ is a positive root for any $1\le k\le l-1$.
\end{enumerate}
\end{proposition}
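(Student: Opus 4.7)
Parts (1) and (2) are quoted directly from \cite[Theorem 3.3.1]{BjB}, so my plan concentrates on (3) and proceeds via the classical root-flipping argument. The only elementary input needed is the fact, verifiable directly in $\mathfrak{S}_n$, that each simple reflection $s_p$ sends $\alpha_p$ to $-\alpha_p$ and permutes the remaining positive roots $\{e_i - e_j \mid 1 \le i < j \le n\} \setminus \{\alpha_p\}$ among themselves.

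Set $\beta_k := s_{i_1}s_{i_2}\cdots s_{i_k}(\alpha_{i_{k+1}})$ and assume, toward a contradiction, that some $\beta_k$ is a negative root. The sequence
\[
\gamma_k := \alpha_{i_{k+1}},\quad \gamma_{k-1} := s_{i_k}(\gamma_k),\quad \ldots,\quad \gamma_0 := s_{i_1}(\gamma_1)=\beta_k
\]
starts positive and ends negative, so there exists a largest index $j \in \{1,\dots,k\}$ at which positivity is lost, i.e.\ $\gamma_j$ is positive while $\gamma_{j-1} = s_{i_j}(\gamma_j)$ is negative. By the elementary fact above, this forces $\gamma_j = \alpha_{i_j}$, that is,
\[
s_{i_{j+1}}s_{i_{j+2}}\cdots s_{i_k}(\alpha_{i_{k+1}}) = \alpha_{i_j}.
\]
Converting this root identity into a group identity via $w s_{\alpha} w^{-1} = s_{w(\alpha)}$, applied with $w = s_{i_{j+1}}\cdots s_{i_k}$ and $\alpha = \alpha_{i_{k+1}}$, yields the braid-like identity
\[
s_{i_{j+1}}s_{i_{j+2}}\cdots s_{i_k}s_{i_{k+1}} = s_{i_j}s_{i_{j+1}}\cdots s_{i_k}.
\]

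Substituting this equality into $w = s_{i_1}\cdots s_{i_l}$ rewrites $w$ as a product of only $l-2$ simple reflections (the two adjacent copies of $s_{i_{k+1}}$ that appear cancel), contradicting $l(w)=l$ and completing the proof of (3). The only delicate point is the index bookkeeping: one must choose $j$ as the largest positivity-losing step so that $\gamma_j$ is still positive (which is precisely what forces $\gamma_j = \alpha_{i_j}$ rather than some other positive root), and then substitute the identity back into $w$ so that the cancellation is visible. Since the statement is purely about the symmetric group $\mathfrak{S}_n$, no input from Auslander algebras or tilting theory is required.
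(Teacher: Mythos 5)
Your argument for (3) is correct, and it is the standard one. For comparison: the paper offers no proof of this proposition at all --- parts (1) and (2) are quoted from \cite[Theorem 3.3.1]{BjB}, and part (3) is left as a standard fact about the geometric representation (it is essentially the statement that $l(us_{\alpha})>l(u)$ implies $u(\alpha)>0$, proved by exactly the exchange-condition argument you give; see e.g.\ Björner--Brenti or Humphreys). Your write-up supplies the two ingredients the paper implicitly relies on, namely that $s_p$ permutes the positive roots other than $\alpha_p$, and the conjugation formula $ws_{\alpha}w^{-1}=s_{w(\alpha)}$, and the bookkeeping with the transition index $j$ is handled correctly (any index where the sign flips from positive to negative would do, and the degenerate case $j=k$, where the identity collapses to $i_k=i_{k+1}$, still yields a length-$(l-2)$ expression). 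One cosmetic remark: after substituting the identity $s_{i_{j+1}}\cdots s_{i_k}s_{i_{k+1}}=s_{i_j}s_{i_{j+1}}\cdots s_{i_k}$ into $w$, which pair of letters visibly cancels ($s_{i_j}s_{i_j}$ or $s_{i_{k+1}}s_{i_{k+1}}$) depends on the direction of the substitution; your parenthetical corresponds to replacing the left-hand segment $s_{i_j}\cdots s_{i_k}$ by $s_{i_{j+1}}\cdots s_{i_k}s_{i_{k+1}}$, which is fine, but it is worth stating explicitly to avoid confusion. No gap.
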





We also need the following proposition.

\begin{proposition}\label{3.m}

There exists a well-defined surjective map $\mathfrak{S}_{n}
\rightarrow \langle I_{1} , \ldots, I_{n-1} \rangle$ which maps $w$
to $I(w) = I_{i_{1}}\cdots I_{i_{l}}$, where $w = s_{i_{1}} \cdots
s_{i_{l}}$ is an arbitrary reduced expression.
\end{proposition}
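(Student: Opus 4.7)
The plan is to define the map first on reduced expressions and then show it is independent of the choice of reduced expression, after which surjectivity will follow by a reduction procedure on arbitrary words.

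First I would address well-definedness. Let $w\in\mathfrak{S}_n$ and let $w=s_{i_1}s_{i_2}\cdots s_{i_l} = s_{j_1}s_{j_2}\cdots s_{j_l}$ be two reduced expressions of $w$. By Proposition \ref{3.16}(2), one reduced expression can be transformed into the other by applying the braid moves (b) and (c) repeatedly. But the relations in Proposition \ref{3.15}(2)(3) say exactly that the corresponding ideal products are invariant under these moves: replacing $I_iI_j$ by $I_jI_i$ when $|i-j|\ge 2$, and $I_iI_jI_i$ by $I_jI_iI_j$ when $|i-j|=1$, both preserve the product. Hence $I_{i_1}\cdots I_{i_l}=I_{j_1}\cdots I_{j_l}$, so the formula $I(w):=I_{i_1}\cdots I_{i_l}$ gives a well-defined element of $\langle I_1,\ldots,I_{n-1}\rangle$.

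Next I would show surjectivity. An arbitrary element of $\langle I_1,\ldots,I_{n-1}\rangle$ has the form $I_{j_1}I_{j_2}\cdots I_{j_m}$ for some (not necessarily reduced) word in the letters $s_1,\ldots,s_{n-1}$. Let $w=s_{j_1}s_{j_2}\cdots s_{j_m}\in\mathfrak{S}_n$. By Proposition \ref{3.16}(1), this expression can be transformed into a reduced expression of $w$ by a finite sequence of operations (a), (b), (c). Each application of (b) or (c) leaves the associated ideal product unchanged by Proposition \ref{3.15}(2)(3); each application of (a), which removes a factor $s_is_i$, corresponds on the ideal side to replacing $I_iI_i$ by $I_i$, which is permissible by the idempotent relation $I_i^2=I_i$ of Proposition \ref{3.15}(1). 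Therefore the original product $I_{j_1}\cdots I_{j_m}$ equals $I(w)$ where $w$ is its underlying group element, so the map is surjective.

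The only delicate point is the asymmetry between the monoid of ideals (where $I_i^2=I_i$, not $I_i^2=\Lambda$) and the group $\mathfrak{S}_n$ (where $s_i^2=1$). This could have been an obstacle, but it is not: the reduction direction via operation (a) is the only direction ever needed, and there we go from $I_iI_i$ to $I_i$, which the relation $I_i^2=I_i$ permits. Once this observation is in place, both well-definedness and surjectivity follow immediately by combining Propositions \ref{3.15} and \ref{3.16}.
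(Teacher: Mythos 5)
Your well-definedness argument is exactly the paper's: two reduced expressions of the same $w$ are linked by the moves (b) and (c) of Proposition \ref{3.16}(2), and Proposition \ref{3.15}(2)(3) shows these moves leave the ideal product unchanged. That part is fine.

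The surjectivity argument has a genuine gap, and it sits precisely at the ``delicate point'' you flag and then dismiss. When operation (a) is applied to the word, the factor $s_is_i$ is deleted entirely; on the ideal side you propose to replace $I_iI_i$ by $I_i$, which preserves the \emph{value} of the product but leaves one factor of $I_i$ behind. From that moment on the ideal product no longer corresponds letter-for-letter to the word, so at the end of the reduction you hold a reduced expression of $w$ together with an ideal product that is in general \emph{not} $I(w)$. Concretely, your concluding claim --- that $I_{j_1}\cdots I_{j_m}=I(w)$ for $w=s_{j_1}\cdots s_{j_m}$ the underlying group element of an arbitrary word --- is false: take $m=2$ and $j_1=j_2=1$. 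Then $I_1I_1=I_1\neq\Lambda$ by Proposition \ref{3.15}(1), whereas $s_1s_1=e$ and $I(e)=\Lambda$ is the empty product. The assignment from words to ideal products does not factor through $\mathfrak{S}_n$; the semigroup $\langle I_1,\ldots,I_{n-1}\rangle$ obeys $I_i^2=I_i$ rather than $I_i^2=\Lambda$, so it is governed by the $0$-Hecke relations, not the group relations.

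The repair is the paper's minimality trick. Given $I\in\langle I_1,\ldots,I_{n-1}\rangle$, choose an expression $I=I_{i_1}\cdots I_{i_l}$ with $l$ minimal and set $w=s_{i_1}\cdots s_{i_l}$. If operation (a) were ever needed while reducing this word, then at the \emph{first} such moment only (b) and (c) have been used, so the ideal product still matches the word letter for letter and contains a consecutive $I_iI_i=I_i$; contracting it would give an expression of $I$ of length $l-1$, contradicting minimality. Hence only (b) and (c) occur, the word reduces to a reduced expression of the same length $l$, so $s_{i_1}\cdots s_{i_l}$ is itself reduced and $I=I(w)$.
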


\begin{proof}
First, we show that the map is well-defined. Take two reduced
expressions $w = s_{i_{1}} \cdots s_{i_{l}} = s_{j_{1}} \cdots
s_{j_{l}}$
 of $w$. These two expressions are transformed into each other by the operation
 (b) and (c) in Proposition \ref{3.16}. Then by Proposition \ref{3.15},
 we obtain $I_{i_{1}} \cdots I_{i_{l}} = I_{j_{1}} \cdots I_{j_{l}}$.

Next we show that the map is surjective. For any  $I \in \langle
I_{1}, \ldots , I_{n-1} \rangle$, we take a minimal number $l$ such
that $I = I_{i_{1}} \cdots I_{i_{l}}$ holds for some $i_{1}, \cdots
, i_{l}\in\{1,\ldots,n-1\}$. Now we put $w := s_{i_{1}} \cdots
s_{i_{l}}$. This expression is
 transformed into a reduced expression of $w$ by applying (a), (b) and  (c) in Proposition \ref{3.16}.
Since $l$ is minimal, then (a) would not happen. Therefore $w =
s_{i_{1}} \ldots s_{i_{l}}$ is a reduced expression and we have $I =
I(w)$.
\end{proof}

Since $I(w)$ is a tilting $\Lambda$-module with
$\End_\Lambda(I(w))\cong\Lambda$ for any $w\in\mathfrak{S}_n$ by
Proposition \ref{3.m}, we have an autoequivalence
\[-\LotimesL I(w):\DD^{\bo}(\mod\Lambda)\to \DD^{\bo}(\mod\Lambda)\]
whose quasi-inverse is given by $\RHom_\Lambda(I(w),-)$.
We define a full subcategory $\TT$ of $\DD^{\bo}(\mod\Lambda)$ by
\[\TT:=\{X\in\DD^{\bo}(\mod\Lambda)\mid\forall i\in\mathbb{Z}\ H^i(X)e_n=0\}.\]
The Grothendieck group $K_0(\TT)$ is a free abelian group with basis $[S_1],\ldots,[S_{n-1}]$.
We identify $V$ with $\mathbb{R}\otimes_{\mathbb{Z}}K_0(\TT)$ by $\alpha_i=[S_i]$ for any $1\le i\le n-1$.

\begin{lemma}\label{restriction to T}
\begin{enumerate}[\rm(1)]
\item We have an induced autoequivalence $-\LotimesL I(w):\TT\to\TT$.
\item We have $[-\LotimesL I_i]=s_i$ in ${\rm GL}(V)$ for any $1\le i\le n-1$.
\end{enumerate}
\end{lemma}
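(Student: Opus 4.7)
For (1), I would reduce to checking a single $-\LotimesL I_i$ for $1\le i\le n-1$, since Proposition \ref{3.m} exhibits every $I(w)$ as a product of such $I_i$. The crucial observation is that $e_ie_n=0$ for $i\le n-1$ gives $I_ie_n=\Lambda(1-e_i)\Lambda\cdot e_n=\Lambda e_n$, which is projective as a left $\Lambda$-module, and dually $e_nI_i=e_n\Lambda=P_n$, which is projective as a right $\Lambda$-module. Since the idempotent projection $(-)e_n$ is exact, associativity yields
\[(X\LotimesL I_i)e_n\;\cong\;X\LotimesL(I_ie_n)\;=\;X\LotimesL\Lambda e_n\;=\;Xe_n\]
in $\DD^{\bo}(\mod\Lambda)$, so that $H^k((X\LotimesL I_i)e_n)=H^k(X)e_n=0$ for $X\in\TT$. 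The quasi-inverse $\RHom_\Lambda(I_i,-)$ is handled symmetrically via $\RHom_\Lambda(I_i,X)e_n\cong\RHom_\Lambda(e_nI_i,X)=\RHom_\Lambda(P_n,X)=Xe_n$. Hence both functors restrict to $\TT$, and (1) follows by composition.

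For (2), the short exact sequence $0\to I_i\to\Lambda\to S_i\to0$ of bimodules induces a triangle $X\LotimesL I_i\to X\to X\LotimesL S_i$, giving $[X\LotimesL I_i]=[X]-[X\LotimesL S_i]$ in $K_0(\TT)$. I would then compute $[S_j\LotimesL S_i]$ for $1\le j\le n-1$ by tensoring the minimal projective resolution
\[0\to P_j\to P_{j-1}\oplus P_{j+1}\to P_j\to S_j\to 0\]
from Proposition \ref{3.1}(3) (with the convention $P_0=0$) with the bimodule $S_i$. Since $P_k\otimes_\Lambda S_i=e_kS_i$ equals $S_i$ for $k=i$ and vanishes otherwise, the resulting complex has cohomology $S_i$ in degrees $-2$ and $0$ when $j=i$, cohomology $S_i$ in degree $-1$ when $|j-i|=1$, and is zero otherwise. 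Under the convention $[X]=\sum_k(-1)^k[H^k(X)]$ this gives $[S_i\LotimesL S_i]=2\alpha_i$, $[S_{i\pm1}\LotimesL S_i]=-\alpha_i$, and $[S_j\LotimesL S_i]=0$ for $|j-i|\ge2$. Substituting yields
\[[-\LotimesL I_i]\alpha_i=-\alpha_i,\quad[-\LotimesL I_i]\alpha_{i\pm1}=\alpha_{i\pm1}+\alpha_i,\quad[-\LotimesL I_i]\alpha_j=\alpha_j\text{ for }|j-i|\ge 2,\]
which is precisely the action of $s_i$ on $V$ in the basis $\alpha_j=e_j-e_{j+1}$.

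The main technical care lies in verifying that all cohomologies $H^k(S_j\LotimesL S_i)$ automatically lie in $\TT$, which is immediate since only the simple $S_i$ with $i\ne n$ appears, and in the boundary cases $j=1$ and $j=n-1$, which are harmless: in the first the missing summand $P_0$ is absent, and in the second $P_n\otimes_\Lambda S_i=e_nS_i=0$ since $i\le n-1$. Tracking signs in $K_0$ of the derived category is the other delicate point, but the sign convention above makes both the $H^{-2}$ and $H^0$ contributions add, correctly producing the coefficient $2$ in the diagonal case.
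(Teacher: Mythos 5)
The genuine gap is in part (1), at the step ``(1) follows by composition.'' Knowing that each $-\LotimesL I_{i_k}$ preserves $\TT$ only gives that the composite functor, i.e.\ $-\LotimesL(I_{i_1}\LotimesL\cdots\LotimesL I_{i_l})$, preserves $\TT$; the lemma, however, concerns $-\LotimesL I(w)$ where $I(w)=I_{i_1}\cdots I_{i_l}$ is the \emph{ordinary product of ideals}, and at this point in the paper the two are not known to coincide. Indeed they differ in general: $I_i^2=I_i$ by Proposition \ref{3.15}(1), whereas $I_i\LotimesL I_i\not\cong I_i$ (otherwise the autoequivalence $-\LotimesL I_i$ of $\DD^{\bo}(\mod\Lambda)$ would satisfy $F\circ F\cong F$ and hence be the identity, contradicting $\Lambda\LotimesL I_i=I_i\neq\Lambda$). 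The identification $I(w)=I_{i_1}\LotimesL\cdots\LotimesL I_{i_l}$ for \emph{reduced} expressions is exactly Proposition \ref{two actions same}(2), which is proved later using this very lemma together with Proposition \ref{3.16}(3), so invoking it here is circular.

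The repair is immediate from your own key observation, so no reduction is needed: since $e_n\in I_j$ for every $j\le n-1$, also $e_n\in I(w)$, hence $I(w)e_n=\Lambda e_n$ and $e_nI(w)=e_n\Lambda$, and the computation $(X\LotimesL I(w))e_n\cong X\LotimesL(I(w)\LotimesL\Lambda e_n)=X\LotimesL\Lambda e_n=Xe_n$ (using that $\Lambda e_n$ is a projective left module), together with its dual for $\RHom_\Lambda(I(w),-)$, goes through verbatim with $I(w)$ in place of $I_i$. Note this is a genuinely different route from the paper, which instead applies $X\LotimesL-$ to the triangle $I(w)\to\Lambda\to\Lambda/I(w)$ and uses that $\Lambda/I(w)$ is annihilated by $e_n$ on either side; both arguments ultimately rest on $e_n\in I(w)$. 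Your part (2) is correct and is essentially the paper's argument: the paper merely records the values of $\chi(S_j,S_i)$ (its displayed definition omits the sign $(-1)^k$, evidently a typo), while you derive them from the resolutions of Proposition \ref{3.1}(3), handling the boundary cases and signs correctly.
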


\begin{proof}
(1) We have a triangle $I(w)\to\Lambda\to\Lambda/I(w)\to I(w)[1]$ in $\DD(\Mod\Lambda^{\op}\otimes_K\Lambda)$.
Applying $X\LotimesL-$ for $X\in\TT$, we have a triangle
\begin{equation}\label{3.17.1}
X\LotimesL I(w)\to X\to X\LotimesL(\Lambda/I(w))\to X\LotimesL I(w)[1]
\end{equation}
in $\DD^{\bo}(\mod\Lambda)$.
Since both $X$ and $X\LotimesL(\Lambda/I(w))$ belong to $\TT$, so is $X\LotimesL I(w)$.
Thus $\TT\LotimesL I(w)\subseteq\TT$ holds.
Similarly one can show $\RHom_\Lambda(I(w),\TT)\subseteq\TT$. Therefore the assertion follows.

(2) For $X\in\DD^{\bo}(\mod\Lambda)$ and $Y\in\DD^{\bo}(\mod\Lambda^{\op})$, let
$\chi(X,Y):=\sum_{k\in\mathbb{Z}}(-1)^k\dim_KH^k(X\LotimesL Y)$. Then
\[\chi(S_j,S_i)=\left\{\begin{array}{cc}
2&i=j\\
-1&|i-j|=1\\
0&|i-j|\ge2
\end{array}\right.\]
holds for any $1\le j\le n-1$. We have $[S_j\LotimesL
I_i]=[S_j]-[S_j\LotimesL S_i]=[S_j]-\chi(S_j,S_i)[S_i]$ by applying
\eqref{3.17.1} to $X=S_j$ and $w=s_i$. Thus the assertion follows
easily.
\end{proof}

We have the following key observations.

\begin{proposition}\label{two actions same}
Let $w\in\mathfrak{S}_n$ and $w=s_{i_1}s_{i_2}\cdots s_{i_l}$ a reduced expression.
\begin{enumerate}[\rm(1)]
\item We have $[-\LotimesL I(w)]=w^{-1}$ in ${\rm GL}(V)$.
\item We have $I_{i_l}\supsetneq I_{i_{l-1}}I_{i_l}\supsetneq\cdots\supsetneq I_{i_1}\cdots I_{i_l}$ and
$I(w)=I_{i_1}\LotimesL\cdots\LotimesL I_{i_l}$.
\item Let $1\leq j\leq n-1$. Then $l(s_jw)>l(w)$ if and only if
$I(s_jw)<I(w)$.
\end{enumerate}
\end{proposition}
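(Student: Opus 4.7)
The plan is to prove (1), (2), (3) simultaneously by induction on $l:=l(w)$; the base case $l=0$ is immediate. For the inductive step, set $v:=s_{i_2}\cdots s_{i_l}$ so that $w=s_{i_1}v$ with $l(v)=l-1$. The inductive hypothesis supplies $I(v)=I_{i_2}\LotimesL\cdots\LotimesL I_{i_l}$, the identity $[-\LotimesL I(v)]=v^{-1}$ in ${\rm GL}(V)$, and the chain $I_{i_l}\supsetneq\cdots\supsetneq I(v)$ of strict inclusions. To extend everything to $w$, it suffices to establish the single statement that $I(w)=I_{i_1}I(v)=I_{i_1}\LotimesL I(v)$ and that this is strictly contained in $I(v)$.

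To prove this, apply $-\otimes_\Lambda I(v)$ to the bimodule exact sequence $0\to I_{i_1}\to\Lambda\to S_{i_1}\to 0$. Since $\projdim I(v)\le 1$ as a $\Lambda^{\op}$-module (Theorem \ref{3.7}), we have $\Tor_k^\Lambda(S_{i_1},I(v))=0$ for $k\ge 2$, and we obtain the exact sequence
\[0\to\Tor_1^\Lambda(S_{i_1},I(v))\to I_{i_1}\otimes_\Lambda I(v)\to I(v)\to S_{i_1}\otimes_\Lambda I(v)\to 0.\]
Hence the statement reduces to the pair of assertions $\Tor_1^\Lambda(S_{i_1},I(v))=0$ and $S_{i_1}\otimes_\Lambda I(v)\ne 0$. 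Since $I(v)$ is a tilting $\Lambda^{\op}$-module, Proposition \ref{2.9}(3) applied to $\Lambda^{\op}$ guarantees that exactly one of these two objects vanishes.

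To decide which, we pass to $K_0(\TT)=V$. By Lemma \ref{restriction to T}(1), $S_{i_1}\LotimesL I(v)\in\TT$, and the inductive hypothesis for (1) gives
\[[S_{i_1}\otimes_\Lambda I(v)]-[\Tor_1^\Lambda(S_{i_1},I(v))]=[S_{i_1}\LotimesL I(v)]=v^{-1}(\alpha_{i_1}).\]
Since $w=s_{i_1}v$ is reduced, $l(s_{i_1}v)>l(v)$, so standard Coxeter theory forces $v^{-1}(\alpha_{i_1})$ to be a positive root and hence a nonnegative integer combination of $\alpha_1=[S_1],\ldots,\alpha_{n-1}=[S_{n-1}]$. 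Each of the two module classes on the left-hand side is itself a nonnegative combination, and exactly one vanishes; positivity of the difference forces $\Tor_1^\Lambda(S_{i_1},I(v))=0$ and $S_{i_1}\otimes_\Lambda I(v)\ne 0$, completing the key step and yielding (2).

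Part (1) then follows from (2) by associativity of $\LotimesL$ combined with Lemma \ref{restriction to T}(2): $[-\LotimesL I(w)]$ equals the composition $s_{i_l}\circ\cdots\circ s_{i_1}=w^{-1}$. For (3), if $l(s_jw)>l(w)$ then $s_js_{i_1}\cdots s_{i_l}$ is a reduced expression of $s_jw$, so $I(s_jw)=I_jI(w)$; by (2) applied to $s_jw$ this is strictly contained in $I(w)$, and Corollary \ref{3.12} identifies this strict inclusion with a Hasse arrow from $I(w)$ down to $I(s_jw)$ in $\tilt\Lambda$, giving $I(s_jw)<I(w)$. The converse follows by swapping the roles of $w$ and $s_jw$. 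The most delicate point, and the main obstacle, is the Grothendieck group calculation: Coxeter-theoretic positivity of $v^{-1}(\alpha_{i_1})$ combines with the exclusive-or structure of Proposition \ref{2.9}(3) to pin down which homology vanishes.
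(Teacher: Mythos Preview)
Your proof is correct and follows essentially the same route as the paper's. Both arguments run an induction on $l(w)$, use the inductive hypothesis to identify $[S_{i_1}\LotimesL I(v)]$ with $v^{-1}(\alpha_{i_1})$, invoke the Coxeter-theoretic fact that this root is positive (Proposition~\ref{3.16}(3), applied to the reduced expression $w^{-1}=s_{i_l}\cdots s_{i_1}$), and conclude that $S_{i_1}\otimes_\Lambda I(v)\neq 0$ so that $I_{i_1}I(v)\subsetneq I(v)$ and the derived tensor product collapses to the ideal product. The only cosmetic differences are organizational: the paper first observes that (2) implies (1), proves (2) by induction, and then deduces (3); you run (1)--(3) simultaneously. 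For the collapse of $\LotimesL$ to the product the paper cites Proposition~\ref{3.6}(2) (in its $\Lambda^{\op}$-version), whereas you argue directly via Proposition~\ref{2.9}(3); and for (3) the paper cites Proposition~\ref{3.11}(1)(a) where you cite its corollary~\ref{3.12}. These amount to the same thing.
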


\begin{proof}
The assertion (2) implies (1) since Lemma \ref{restriction to T}(2) implies
$[-\LotimesL I(w)]=[-\LotimesL I_{i_l}]\circ\cdots\circ[-\LotimesL I_{i_2}]\circ[-\LotimesL I_{i_1}]
=s_{i_l}\cdots s_{i_2}s_{i_1}=w^{-1}$.

We prove (2) inductively. This is clear for $l=1$. For
$u:=s_{i_2}\cdots s_{i_{l}}$, we assume $I_{i_l}\supsetneq
I_{i_{l-1}}I_{i_l}\supsetneq\cdots\supsetneq I_{i_2}\cdots
I_{i_{l}}$ and $I(u)=I_{i_2}\LotimesL\cdots\LotimesL I_{i_{l}}$.
Then $[S_{i_1}\LotimesL I(u)]=u^{-1}(\alpha_{i_1})=s_{i_l}\cdots
s_{i_{2}}(\alpha_{i_1})$ is a positive root by Proposition
\ref{3.16}(3). Hence $S_{i_1}\otimes_\Lambda I(u)\neq0$ holds, and
we have $I(u)\supsetneq I_{i_1}I(u)=I(w)$. Thus $I_{i_1}\LotimesL
I(u)=I(w)$ holds by Proposition \ref{3.6}(1), and the assertion
follows.

(3) It suffices to show that $l(s_{j}w)>l(w)$ implies that
$I(s_{j}w)<I(w)$ by replacing $s_{j}w$ with $w$ if necessary. By (2)
we have $I(w)\supsetneq I(s_{j}w)=I_{j}I(w)$. Then by Proposition
\ref{3.11}(1)(a), we have $I(s_{i}w)<I(w)$.
\end{proof}

Now we have the following main result in this section.

\begin{theorem}\label{3.17}
\begin{enumerate}[\rm(1)]
\item There exists a well-defined bijection $\mathfrak{S}_{n} \cong
\langle I_{1} , \ldots, I_{n-1} \rangle$ which maps $w$ to $I(w) =
I_{i_{1}}\cdots I_{i_{l}}$, where $w = s_{i_{1}} \cdots s_{i_{l}}$ is
an arbitrary reduced expression.

\item Consequently, there exists a bijection  $I:\mathfrak{S}_{n} \cong\tilt\Lambda$. In particular $\#\tilt\Lambda=n!$.
\item The bijection $I$ in (2) is an anti-isomorphism of posets with respect to the left order on $\mathfrak{S}_{n}$ and the generation order on $\tilt\Lambda$.
\end{enumerate}
\end{theorem}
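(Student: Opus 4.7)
My plan is to harvest the machinery already assembled. Part (1) asks for a bijection, but by Proposition \ref{3.m} the map $w\mapsto I(w)$ is already a well-defined surjection $\mathfrak{S}_n\twoheadrightarrow \langle I_1,\ldots,I_{n-1}\rangle$, so only injectivity remains. For this I would invoke Proposition \ref{two actions same}(1): the derived tensor product $-\LotimesL I(w)$ acts on $V\cong\mathbb{R}\otimes_{\mathbb{Z}}K_0(\TT)$ by the element $w^{-1}$ of ${\rm GL}(V)$. Thus if $I(w)=I(w')$ as $(\Lambda,\Lambda)$-bimodules, then $-\LotimesL I(w)=-\LotimesL I(w')$ as autoequivalences, hence $w^{-1}=(w')^{-1}$ in ${\rm GL}(V)$. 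Since the geometric representation $\mathfrak{S}_n\hookrightarrow {\rm GL}(V)$ is faithful, this forces $w=w'$.

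Part (2) is then immediate: combining the bijection of (1) with Theorem \ref{3.10}(3), which identifies $\tilt\Lambda$ with $\langle I_1,\ldots,I_{n-1}\rangle$, yields the bijection $I\colon\mathfrak{S}_n\cong\tilt\Lambda$ and in particular $\#\tilt\Lambda=\#\mathfrak{S}_n=n!$.

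For part (3) I would argue via Hasse quivers, appealing to Lemma \ref{2.15}: it suffices to show that $I$ induces an arrow-reversing isomorphism between the Hasse quivers ${\rm H}(\mathfrak{S}_n)$ (for the left order) and ${\rm H}(\tilt\Lambda)$ (for the generation order). Fix $w\in\mathfrak{S}_n$ and $1\le j\le n-1$. By the definition of the left order, the arrows at $w$ in ${\rm H}(\mathfrak{S}_n)$ are precisely $w\to s_jw$ when $l(s_jw)<l(w)$ and $s_jw\to w$ when $l(s_jw)>l(w)$, giving $n-1$ arrows in total. On the tilting side, Corollary \ref{3.12} shows that the arrows at $I(w)$ in ${\rm H}(\tilt\Lambda)$ are $I(w)\to I_jI(w)$ when $I_jI(w)\ne I(w)$ and $\Hom_{\Lambda^{\op}}(I_j,I(w))\to I(w)$ otherwise, again giving exactly $n-1$ arrows. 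By Proposition \ref{two actions same}(2)(3), whenever $l(s_jw)>l(w)$ we have $I(s_jw)=I_jI(w)\subsetneq I(w)$, so the arrow $s_jw\to w$ of ${\rm H}(\mathfrak{S}_n)$ corresponds to the arrow $I(w)\to I(s_jw)$ of ${\rm H}(\tilt\Lambda)$ (and symmetrically for $l(s_jw)<l(w)$, using the left mutation description of Proposition \ref{3.11}(1)). This matches the $n-1$ arrows at each vertex bijectively while reversing orientation, so Lemma \ref{2.15} gives the anti-isomorphism of posets.

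The conceptually delicate step is injectivity in (1); all the combinatorial identifications in (3) are routine once one knows that the Hasse quivers of both sides are regular of valency $n-1$ and that the correspondence $s_jw \leftrightarrow I_jI(w)$ respects the direction reversal, which is exactly the content of Proposition \ref{two actions same}(3).
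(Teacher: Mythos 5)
Your proposal is correct and follows essentially the same route as the paper: well-definedness and surjectivity from Proposition \ref{3.m}, injectivity via the faithfulness of the geometric representation combined with Proposition \ref{two actions same}(1), and the poset anti-isomorphism by matching Hasse quivers through Proposition \ref{two actions same}(3) and Lemma \ref{2.15}. Your extra bookkeeping in part (3) (both Hasse quivers having valency $n-1$ at each vertex, via Corollary \ref{3.12}) only makes explicit what the paper leaves implicit.
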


\begin{proof}
(1) By Proposition \ref{3.m}, $I$ is a well-defined surjective map.
Now we show that the map is injective. If $I(w)=I(w^\prime)$, then
$[-\LotimesL I(w)]=[-\LotimesL I(w^\prime)]$ in ${\rm GL}(V)$. By
Proposition \ref{two actions same}(1), the images of $w$ and
$w^\prime$ in ${\rm GL}(V)$ are the same. Since
$\mathfrak{S}_n\to{\rm GL}(V)$ is injective, we have $w=w'$.

(2) This is immediate from (1) and Theorem \ref{3.10}(3).

(3) In the Hasse quiver of the left order on $\mathfrak{S}_{n}$,
arrows ending at $w \in \mathfrak{S}_{n}$ are given by $w\to s_{i}w$
with $1 \leq i \leq n-1$ satisfying $l(s_{i}w) > l(w)$. By Proposition \ref{two
actions same}(3), the Hasse quiver of $\tilt\Lambda$ coincides with
the opposite of the Hasse quiver of $\mathfrak{S}_{n}$. Thus $I$ is
an anti-isomorphism by Lemma \ref{2.15}.
\end{proof}

Immediately we have the following corollary.

\begin{corollary}\label{3.a} For any expression $w=s_{i_1}s_{i_2}\cdots
s_{i_l}\in\mathfrak{S}_n$,
 $I(w)=\mu_{i_1}\mu_{i_2}\cdots\mu_{i_l}(\Lambda)$ holds,
 where $\mu_i$ is defined in Corollary \ref{3.12}.
\end{corollary}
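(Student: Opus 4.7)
My plan is induction on $l$. The base case $l=0$ is trivial since $I(e)=\Lambda$. For the inductive step, set $v := s_{i_{2}} \cdots s_{i_{l}}$, so by induction $\mu_{i_{2}}\cdots\mu_{i_{l}}(\Lambda) = I(v)$, and it suffices to establish
\[ \mu_{i_{1}}(I(v)) = I(s_{i_{1}}v). \]
Since $I(v)\in\tilt\Lambda$, Proposition \ref{3.11}(1) gives a dichotomy, and the key distinction will be whether $s_{i_{1}}$ lengthens or shortens $v$.

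Case 1 ($l(s_{i_{1}}v) > l(v)$): Concatenating $s_{i_{1}}$ with a reduced expression of $v$ yields a reduced expression of $s_{i_{1}}v$, so by Theorem \ref{3.17}(1), $I(s_{i_{1}}v) = I_{i_{1}} I(v)$. Applying Proposition \ref{two actions same}(2) to this reduced expression shows that the inclusion is strict, so in particular $I_{i_{1}}I(v) \neq I(v)$. Proposition \ref{3.11}(1)(a) then gives $\mu_{i_{1}}(I(v)) = I_{i_{1}}I(v) = I(s_{i_{1}}v)$.

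Case 2 ($l(s_{i_{1}}v) < l(v)$): Setting $u := s_{i_{1}}v$, we have $v = s_{i_{1}}u$ with $l(s_{i_{1}}u) > l(u)$, so Case 1 applied to $u$ yields $\mu_{i_{1}}(I(u)) = I_{i_{1}}I(u) = I(v)$. Since mutation of a tilting module with respect to a fixed indecomposable summand is an involution (immediate from Definition \ref{mutation}, as each of the three cases there is symmetric in the two modules), applying $\mu_{i_{1}}$ to both sides gives $I(u) = \mu_{i_{1}}(I(v))$, i.e., $\mu_{i_{1}}(I(v)) = I(s_{i_{1}}v)$ as desired.

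The main subtlety is that when the expression $w = s_{i_{1}} \cdots s_{i_{l}}$ is not reduced, the intermediate tilting modules $\mu_{i_{k}}\cdots\mu_{i_{l}}(\Lambda)$ do not descend monotonically along the generation order: whenever a shortening $s_{i_{k}}$ appears, Proposition \ref{3.11}(1) forces the mutation to switch to a right mutation (so the tilting module goes \emph{up}), which must then precisely undo the previous step. The argument above handles this cleanly by reducing to the reduced case at every step and invoking the involutivity of $\mu_{i_{1}}$ to absorb the redundant reflections.
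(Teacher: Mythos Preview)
Your proof is correct and follows essentially the same approach as the paper's: both reduce to showing $\mu_i(I(w)) = I(s_iw)$, handle the lengthening case $l(s_iw) > l(w)$ via Proposition~\ref{two actions same}(2) and Proposition~\ref{3.11}(1)(a), and obtain the shortening case from involutivity of mutation. You have simply made explicit the induction on $l$ and the Case 2 involutivity argument that the paper's two-line proof leaves to the reader.
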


\begin{proof}
It suffices to show that, if $l(s_iw)=l(w)+1$, then
$I(s_iw)=\mu_i(I(w))$ holds. Since $I(s_iw)\not\cong I(w)$ holds by
Proposition 3.16(2), the assertion follows from Theorem 3.10(1)(a).
\end{proof}

To compare with the Hasse quiver of tilting $\Lambda$-modules, we
  give the Hasse quiver of the left order on the
symmetric group $\mathfrak{S}_{n}$ for $n=2,3$.

\begin{example}\label{3.18}
We describe the Hasse quiver of the left order on $\mathfrak{S}_{2}$
and $\mathfrak{S}_{3}$.
\begin{enumerate}[\rm(1)]
\item The Hasse quiver of the left order on
 $\mathfrak{S}_{2}$ is the opposite of the following quiver:
$$\begin{xy}
 (0,0)*+{{\rm id}=\begin{smallmatrix} [12]\\
\end{smallmatrix}}="1",
(25,0)*+{\begin{smallmatrix} [21]\\
\end{smallmatrix}=s_1}="2",
\ar"1";"2"
\end{xy}$$

\item The Hasse quiver of the left order on
 $\mathfrak{S}_{3}$ is the opposite of the following quiver:
$$\begin{xy}
0;<4pt,0pt>:<0pt,2.2pt>::
 (0,0)*+{{\rm id}=\begin{smallmatrix} [123]\\
\end{smallmatrix}}="1",
(-25,-6)*+{s_1=\begin{smallmatrix} [213]\\
\end{smallmatrix}}="2",
(25,-6)*+{\begin{smallmatrix} [132]\\
\end{smallmatrix}=s_2}="3",
(-25,-18)*+{s_2s_1=\begin{smallmatrix} [312]\\
\end{smallmatrix}}="4",
(25,-18)*+{\begin{smallmatrix} [231]\\
\end{smallmatrix}=s_1s_2}="5",
(0,-24)*+{s_1s_2s_1=\begin{smallmatrix} [321]\\
\end{smallmatrix}=s_2s_1s_2}="6",
\ar"1";"2", \ar"1";"3", \ar"2";"4", \ar"3";"5", \ar"4";"6",
\ar"5";"6",
\end{xy}$$
\end{enumerate}
\end{example}

By Corollary \ref{3.12}, we can describe the Hasse quiver of
 tilting modules over the Auslander algebra $\Lambda$ of $K[x]/(x^{n})$ for $n=2,3$.

\begin{example}\label{3.13}
Denote by $\Lambda_i$ the Auslander algebra of $K[x]/(x^i)$ for
$i=2,3$. Then we have
\begin{enumerate}[\rm(1)]
\item The Hasse quiver ${\rm H}(\tilt\Lambda_{2})$ is the following:
$$\begin{xy}
(0,0)*+{\Lambda_2= \left[\begin{smallmatrix} 1\\ &2\end{smallmatrix}\middle| \begin{smallmatrix} &2\\ 1\\
&2\end{smallmatrix}\right]}="1",
(30,0)*+{I_1=\left[\begin{smallmatrix} &2\end{smallmatrix}\middle| \begin{smallmatrix} &2\\ 1\\
&2\end{smallmatrix}\right]}="2", \ar"1";"2",
\end{xy}$$

\item The Hasse quiver ${\rm H}(\tilt\Lambda_{3})$ is the following:
$$\begin{xy}
0;<5pt,0pt>:<0pt,3pt>::
(0,0)*+{\Lambda_3= \left[\begin{smallmatrix} 1\\ &2\\&&3\end{smallmatrix}\middle| \begin{smallmatrix} &2\\ 1&&3\\
&2\\&&3\end{smallmatrix}\middle| \begin{smallmatrix} &&3\\&2\\ 1&&3\\
&2\\&&3\end{smallmatrix}\right]}="1",
(-30,-10)*+{I_1= \left[\begin{smallmatrix} &2\\&&3\end{smallmatrix}\middle| \begin{smallmatrix} &2\\ 1&&3\\
&2\\&&3\end{smallmatrix}\middle| \begin{smallmatrix} &&3\\&2\\ 1&&3\\
&2\\&&3\end{smallmatrix}\right]}="2",
(30,-10)*+{\left[\begin{smallmatrix} 1\\ &2\\&&3\end{smallmatrix}\middle| \begin{smallmatrix} 1&&3\\
&2\\&&3\end{smallmatrix}\middle| \begin{smallmatrix} &&3\\&2\\ 1&&3\\
&2\\&&3\end{smallmatrix}\right]=I_2}="3",
(-30,-25)*+{I_2I_1= \left[\begin{smallmatrix}  &2\\&&3\end{smallmatrix}\middle| \begin{smallmatrix} &&3\\
&2\\&&3\end{smallmatrix}\middle| \begin{smallmatrix} &&3\\&2\\ 1&&3\\
&2\\&&3\end{smallmatrix}\right]}="4",
(30,-25)*+{\left[\begin{smallmatrix} &&3\end{smallmatrix}\middle| \begin{smallmatrix}  1&&3\\
&2\\&&3\end{smallmatrix}\middle| \begin{smallmatrix} &&3\\&2\\ 1&&3\\
&2\\&&3\end{smallmatrix}\right]=I_1I_2}="5",
(0,-35)*+{I_1I_2I_1=\left[\begin{smallmatrix} &&3\end{smallmatrix}\middle| \begin{smallmatrix}  &&3\\
&2\\&&3\end{smallmatrix}\middle| \begin{smallmatrix} &&3\\&2\\ 1&&3\\
&2\\&&3\end{smallmatrix}\right]=I_2I_1I_2}="6",
\ar"1";"2",\ar"1";"3",\ar"2";"4",\ar"3";"5", \ar"4";"6",\ar"5";"6",
\end{xy}$$
\end{enumerate}
\end{example}

\section{Support $\tau$-tilting modules over the Auslander algebra of $K[x]/(x^n)$}

Throughout this section, $\Lambda$ is the Auslander algebra of
$K[x]/(x^n)$. In this section, we firstly construct a bijection from
the symmetric group $\mathfrak{S}_{n+1}$ to the set $\sttilt\Lambda$
of isomorphism classes of basic support $\tau$-tilting
$\Lambda$-modules, and then we show that this is an anti-isomorphism
of posets. Recall that $\Lambda$ is presented by the quiver
\[\xymatrix{
1\ar@<2pt>[r]^{a_1}&2\ar@<2pt>[r]^{a_2}\ar@<2pt>[l]^{b_2}&3\ar@<2pt>[r]^{a_3}\ar@<2pt>[l]^{b_3}&\cdots\ar@<2pt>[r]^{a_{n-2}}\ar@<2pt>[l]^{b_4}&n-1\ar@<2pt>[r]^{a_{n-1}}\ar@<2pt>[l]^{b_{n-1}}&n\ar@<2pt>[l]^{b_n}
}\]
with relations $a_{1} b_{2}= 0$ and $a_{i} b_{i+1} =b_{i}a_{i-1}$ for any $2 \leq i \leq n-1$.
Let $M$ be the ideal of $\Lambda$ generated by $e_n$, and $\overline{\Lambda}:=\Lambda/M$. Then we
have $M=\bigoplus_{i=1}^nM_i$, where $M_i=e_i M$. We often use the functor
\[\overline{(\ )}:=-\otimes_\Lambda\overline{\Lambda}:\mod\Lambda\to\mod\overline{\Lambda}.\]
For example, $\Lambda$ and $M$ in the case $n=4$ are the following.
\begin{eqnarray*}
M=\left[\begin{smallmatrix} \ \\ &\ \\
&&\ \\&&&4 \\ \end{smallmatrix}\middle| \begin{smallmatrix} &\ \\ \ &&\ \\
&\ &&4\\&&3 \\&&&4\end{smallmatrix}\middle| \begin{smallmatrix} &&\\ &\ &&4\\
\ &&3\\&2&&4\\&&3\\&&&4\\ \end{smallmatrix}\middle| \begin{smallmatrix}&&&4 \\&&3\\ &2&&4\\
1&&3\\&2&&4\\&&3\\&&&4\\ \end{smallmatrix}\right]
&\subseteq&\Lambda=\left[\begin{smallmatrix} 1\\ &2\\
&&3\\&&&4\\ \end{smallmatrix}\middle| \begin{smallmatrix} &2\\ 1&&3\\
&2&&4\\&&3\\&&&4\end{smallmatrix}\middle| \begin{smallmatrix} &&3\\ &2&&4\\
1&&3\\&2&&4\\&&3\\&&&4\\ \end{smallmatrix}\middle| \begin{smallmatrix}&&&4 \\&&3\\ &2&&4\\
1&&3\\&2&&4\\&&3\\&&&4\\ \end{smallmatrix}\right]
\end{eqnarray*}

We start with some facts on $\mathfrak{S}_{n+1}$. We
denote by $s_{i}$ the transposition $(i,i+1)$ in
$\mathfrak{S}_{n+1}$ for $1\le i\le n$. Now we prepare the
following, which will be used later.

\begin{lemma}\label{4.1}
\begin{enumerate}[\rm(1)]
\item $\mathfrak{S}_{n+1}=\bigsqcup_{i=0}^n s_{i+1}\cdots s_n
\mathfrak{S}_{n}$, where
$s_{i+1}\cdots s_n\mathfrak{S}_{n}=\mathfrak{S}_n$ for $i=n$.
\item Let $v\in\mathfrak{S}_n$, $1\le i\le n$ and $w=s_{i+1}\cdots s_nv\in\mathfrak{S}_{n+1}$.
\begin{enumerate}[\rm(a)]
\item If $j\le i-1$, then $s_jw=s_{i+1}\cdots s_ns_jv$.
\item If $j\ge i+2$, then $s_jw=s_{i+1}\cdots s_ns_{j-1}v$.
\end{enumerate}
\end{enumerate}
\end{lemma}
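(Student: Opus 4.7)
For part (1), my plan is to use the standard fact that $\mathfrak{S}_n$ embeds into $\mathfrak{S}_{n+1}$ as the stabilizer of $n+1$, so the (left) cosets $g\mathfrak{S}_n$ are indexed by the value $g(n+1)\in\{1,\ldots,n+1\}$ and there are exactly $n+1$ of them. It therefore suffices to check that the $n+1$ elements $s_{i+1}s_{i+2}\cdots s_n$ (with $0\le i\le n$, the product being empty and equal to the identity when $i=n$) lie in pairwise distinct cosets. A direct calculation, reading compositions right to left as in the paper's convention $(w'w)(k)=w'(w(k))$, shows $(s_{i+1}s_{i+2}\cdots s_n)(n+1)=i+1$: the rightmost $s_n$ sends $n+1\mapsto n$, then $s_{n-1}$ sends $n\mapsto n-1$, and so on until $s_{i+1}$ sends $i+2\mapsto i+1$. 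This produces all $n+1$ possible values of $g(n+1)$, so the cosets are distinct and exhaust $\mathfrak{S}_{n+1}$.

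For part (2) the assertions are purely symbolic identities in the Coxeter group $\mathfrak{S}_{n+1}$, so after cancelling $v$ on the right it reduces to checking
\[
s_js_{i+1}s_{i+2}\cdots s_n = s_{i+1}s_{i+2}\cdots s_n s_{\varepsilon(j)}
\]
where $\varepsilon(j)=j$ in case (a) and $\varepsilon(j)=j-1$ in case (b). Case (a) is immediate from the commutation relations: when $j\le i-1$ we have $|j-k|\ge 2$ for every $k\in\{i+1,\ldots,n\}$, so $s_j$ commutes with each of $s_{i+1},\ldots,s_n$ and slides directly across.

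For case (b), with $j\ge i+2$, I would use commutation together with one braid relation. The factor $s_j$ commutes past $s_{i+1},s_{i+2},\ldots,s_{j-2}$ because those have index distance $\ge 2$ from $j$, yielding
\[
s_js_{i+1}\cdots s_n = s_{i+1}\cdots s_{j-2}\bigl(s_js_{j-1}s_j\bigr)s_{j+1}\cdots s_n.
\]
Applying the braid relation $s_js_{j-1}s_j=s_{j-1}s_js_{j-1}$ and then commuting the rightmost $s_{j-1}$ past $s_{j+1},s_{j+2},\ldots,s_n$ (again by index distance $\ge 2$) gives $s_{i+1}\cdots s_n\, s_{j-1}$, as desired.

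I do not expect any genuine obstacle here: everything follows from the defining relations of $\mathfrak{S}_{n+1}$ (commutation and braid) together with a short calculation of the action of $s_{i+1}\cdots s_n$ on $n+1$. The only care needed is to keep the composition convention consistent when verifying $(s_{i+1}\cdots s_n)(n+1)=i+1$ in part (1).
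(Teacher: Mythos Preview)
Your proposal is correct and follows essentially the same approach as the paper: for (1) the paper also observes that $w\in s_{i+1}\cdots s_n\mathfrak{S}_n$ if and only if $w(n+1)=i+1$, and for (2)(b) the paper performs exactly your sequence of commutations followed by one braid move $s_js_{j-1}s_j=s_{j-1}s_js_{j-1}$ and further commutations. There is nothing to add.
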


\begin{proof}
(1) An element $w\in\mathfrak{S}_{n+1}$ belongs to $s_{i+1}\cdots
s_n\mathfrak{S}_{n}$ if and only if $w(n+1)=i+1$ holds. Thus the
assertion follows.

(2) (a) is clear. (b) follows from $s_jw=s_{i+1}\cdots
s_{j-2}s_js_{j-1}s_j\cdots s_nv= s_{i+1}\cdots
s_{j-1}s_{j}s_{j-1}s_{j+1}\cdots\\ s_nv=s_{i+1}\cdots s_ns_{j-1}v$.
\end{proof}

By Lemma \ref{4.1}, elements in $\mathfrak{S}_{n+1}$ are obtained from elements in $\mathfrak{S}_n$ by multiplying $s_{i+1}\cdots s_n$.
Similarly, we will construct support $\tau$-tilting $\Lambda$-modules from tilting $\Lambda$-modules by applying successive mutations.

In the rest, for $T\in\langle I_1,\ldots,I_{n-1}\rangle$, we consider a direct sum decomposition
\[T=\bigoplus_{i=1}^nT_i\ \mbox{ for }\ T_i:=e_iT.\]
We need the following observations on these direct summands.

\begin{lemma}\label{4.3}
Let $T\in\langle I_1,\ldots,I_{n-1}\rangle$. For any $1\leq i\leq n$, we have
\begin{enumerate}[\rm(1)]
\item $\soc T_i\cong S_n$.
\item $\overline{T_i}$ is either zero or indecomposable with a simple socle $S_{n-i}$.
\item $\overline{T_i}$ has no composition factors isomorphic to $S_{n}$. In particular $\Hom_{\Lambda}(\overline{T_i}, T)=0$.
\item Let $V\in\langle I_1,\ldots,I_{n-1}\rangle$. If $\overline{T_i}\cong\overline{V_i}$, then $T_i\cong V_i$.
\end{enumerate}
\end{lemma}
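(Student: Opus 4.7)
The plan is to reduce the lemma to two structural facts: that $\overline{T}_i$ embeds as a submodule of $\overline{P}_i := P_i/P_iM$, and that $\Lambda/M$ is isomorphic to the preprojective algebra $\Pi_{A_{n-1}}$ of type $A_{n-1}$, whose indecomposable projective at vertex $i$ has simple socle $S_{n-i}$ via the Nakayama permutation $i\mapsto n-i$.

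First I would dispose of (1) and (3). Writing $T=I_{i_1}\cdots I_{i_l}$, I observe $e_n\in I_j$ for every $j\neq n$ (because $e_n=(1-e_j)e_n$), and therefore $e_n=e_n\cdots e_n\in I_{i_1}\cdots I_{i_l}=T$, so $M=\Lambda e_n\Lambda\subseteq T$. In particular $T_i\supseteq e_iM$, which is nonzero since there is a path $i\to i+1\to\cdots\to n$ in the quiver, so $e_i\Lambda e_n\neq 0$. As $T$ is basic tilting with $n$ indecomposable summands and $T=\bigoplus_{i=1}^n T_i$ decomposes into $n$ nonzero pieces, each $T_i$ must be indecomposable. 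Since $T_i\subseteq P_i$ and $\soc P_i=S_n$ is simple (read off the composition series of $P_i$), we conclude $\soc T_i=S_n$, proving (1). For (3), $\overline{T}_i$ is a $(\Lambda/M)$-module and $e_n$ acts as zero on $\Lambda/M$, so no composition factor of $\overline{T}_i$ is $S_n$; then any nonzero $f:\overline{T}_i\to T$ would force $\soc(\Im f)\subseteq\soc T=S_n^{\oplus n}$ (by (1) applied to each $T_j$), putting an $S_n$-factor into $\overline{T}_i$ and producing a contradiction.

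For (2), the crux is the embedding $\overline{T}_i\hookrightarrow\overline{P}_i$. Using $M\subseteq T$ one first shows $TM=M$: indeed $e_n\in T$ gives $e_n=e_n\cdot e_n\in Te_n$, hence $\Lambda e_n\subseteq\Lambda\cdot Te_n=Te_n$ (since $T$ is a two-sided ideal), so $TM=Te_n\Lambda\supseteq\Lambda e_n\Lambda=M$, with the reverse inclusion trivial. Therefore $T_iM=e_i(TM)=e_iM=P_iM$, and combined with $P_iM\subseteq T_i$ (from $M\subseteq T$) the second isomorphism theorem yields $\overline{T}_i=T_i/P_iM\hookrightarrow P_i/P_iM=\overline{P}_i$. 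Next I would verify $\Lambda/M\cong\Pi_{A_{n-1}}$ by comparing presentations: killing $e_n$ converts the $\Lambda$-relation $a_{n-1}b_n=b_{n-1}a_{n-2}$ into $b_{n-1}a_{n-2}=0$, yielding exactly the mesh relations of $\Pi_{A_{n-1}}$. Since $\Pi_{A_{n-1}}$ is self-injective with Nakayama permutation $i\mapsto n-i$, the projective $\overline{P}_i$ has simple socle $S_{n-i}$. Consequently any nonzero submodule of $\overline{P}_i$, in particular $\overline{T}_i$ when nonzero, has socle $S_{n-i}$, and having simple socle it is indecomposable.

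For (4), the quotient map $\pi:P_i\to\overline{P}_i$ has kernel $P_iM\subseteq T_i$, so $T_i=\pi^{-1}(\overline{T}_i)$, which recovers $T_i$ from $\overline{T}_i$ viewed canonically as a submodule of $\overline{P}_i$. The main obstacle is the structural identification $\Lambda/M\cong\Pi_{A_{n-1}}$ together with its self-injective Nakayama structure; once these are in hand, the ideal-product manipulations and the embedding argument are routine bookkeeping.
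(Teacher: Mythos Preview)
Your proof is correct and follows essentially the same route as the paper. The paper's argument is extremely terse: for (1) it records the chain $M_i\subset T_i\subset P_i$ and reads off the socle, for (2) it says only ``is clear'', for (3) ``immediate from (1)'', and for (4) it invokes a pullback along the projective cover $P_i\to\overline{P}_i$ together with the fact that $\overline{T}_i\subset\overline{P}_i$ is an injective hull over $\Lambda/M$. Your write-up supplies exactly the content hidden behind these phrases: you prove $M\subseteq T$ (hence $M_i\subset T_i$) from the idempotent description of the $I_j$, you establish $TM=M$ and hence $T_iM=P_iM$ to get the embedding $\overline{T}_i\hookrightarrow\overline{P}_i$, and you name the structural fact the paper is tacitly using, namely $\Lambda/M\cong\Pi_{A_{n-1}}$ with Nakayama permutation $i\mapsto n-i$. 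This last identification is precisely what makes $\overline{P}_i$ injective with simple socle $S_{n-i}$, which is what the paper needs both for ``(2) is clear'' and for the injective-hull step in (4). One small point: in (4) you should say explicitly that the embedding $\overline{T}_i\hookrightarrow\overline{P}_i$ is determined up to an automorphism of $\overline{P}_i$ (since $\overline{P}_i$ is the injective envelope of $S_{n-i}=\soc\overline{T}_i$), and that any such automorphism lifts along the projective cover $P_i\to\overline{P}_i$; this is what guarantees that $\pi^{-1}(\overline{T}_i)$ is well defined up to isomorphism from the abstract module $\overline{T}_i$.
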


\begin{proof}
(1) Since $M\subseteq T\subseteq \Lambda$, then $M_i\subseteq T_i\subseteq P_i$ and hence $S_n=\soc M_i\subseteq\soc T_i\subseteq\soc P_i=S_n$.

(2) is clear. (3) is immediate from (1).

To prove (4), it suffices to show that $T_i$ can be recovered from
$\overline{T_i}$. If $\overline{T_i}=0$, then $T_i=M_i$.
Thus we can assume $\overline{T_i}\neq0$. Then $\overline{P_i}$ is an injective hull of $\overline{T_i}$ as a $\overline{\Lambda}$-module, and the natural epimorphism
$\pi:P_i\to\overline{P_i}$ is a projective cover of $\overline{P_i}$ as a
$\Lambda$-module. Since $T_i=\pi^{-1}(\overline{T_i})$ holds, the assertion follows.
\end{proof}

The following results on minimal left approximations are also needed
to construct support $\tau$-tilting $\Lambda$-modules.

\begin{lemma}\label{4.4}
Let $T\in\langle I_1,\ldots,I_{n-1}\rangle$.
\begin{enumerate}[\rm(1)]
\item The minimal left $\add(\bigoplus_{j=1}^{i-1}T_j)$-approximation of $T_i$ is
given by $f_i:T_i\to T_{i-1}$, which is the left multiplication of
the arrow $a_{i-1}:i-1\to i$ in the quiver of $\Lambda$. In this
case, $f_i(M_{i})=M_{i-1}$.
\item The minimal left $\add(\bigoplus_{j=i+1}^n T_j)$-approximation of $T_i$ is
given by $g_i:T_i\to T_{i+1}$, which is the left multiplication of
the arrow $b_{i+1}:i+1\to i$ in the quiver of $\Lambda$. This is a
monomorphism.
\end{enumerate}
\end{lemma}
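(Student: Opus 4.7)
The plan is to reduce every claim to elementary computations on $R$-module morphisms, using the identification $\Lambda=\End_R(\bigoplus_{k=1}^n K[x]/(x^k))$ under which $a_{i-1}$ is the canonical projection $\pi_i\colon K[x]/(x^i)\to K[x]/(x^{i-1})$ and $b_{i+1}$ is the canonical inclusion $\iota_{i+1}\colon K[x]/(x^i)\to K[x]/(x^{i+1})$. The key bridge is Theorem \ref{3.7}, which gives $\End_\Lambda(T)\cong\Lambda$ by left multiplication, so
\[\Hom_\Lambda(T_i,T_j)=e_j\Lambda e_i=\Hom_R(K[x]/(x^i),K[x]/(x^j))\]
for every $i,j$, every morphism being left multiplication by an element of the ring. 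Under this dictionary, the map $\Hom_\Lambda(T_{i-1},T_j)\to\Hom_\Lambda(T_i,T_j)$ induced by composition with $f_i$ becomes $z\mapsto z\circ\pi_i$.

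For part (1), the approximation property reduces to showing that, for $j<i$, the map $z\mapsto z\circ\pi_i$ from $\Hom_R(K[x]/(x^{i-1}),K[x]/(x^j))$ to $\Hom_R(K[x]/(x^i),K[x]/(x^j))$ is surjective. A basis of the target consists of the maps $\phi_k\colon 1\mapsto x^k$ for $0\le k\le j-1$, and each $\phi_k$ equals $(1\mapsto x^k)\circ\pi_i$, which is well-defined since $k\le j-1\le i-2$. Minimality then follows from the general fact that a nonzero morphism into an indecomposable module is left minimal: $T_{i-1}$ is an indecomposable summand of a basic tilting module, and $f_i\ne 0$ because applying $\pi_i$ to the generator of $\soc T_i=S_n$ (Lemma \ref{4.3}) --- the projection $K[x]/(x^n)\to K[x]/(x^i)$ --- yields the nonzero projection $K[x]/(x^n)\to K[x]/(x^{i-1})$ generating $\soc T_{i-1}=S_n$. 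The identity $f_i(M_i)=M_{i-1}$ is the same factorisation in disguise: since $M_k=e_k\Lambda e_n\Lambda$ for all $k$, what is needed is $e_{i-1}\Lambda e_n\subset a_{i-1}\cdot e_i\Lambda e_n$, i.e.\ that every map $K[x]/(x^n)\to K[x]/(x^{i-1})$ factors as $\pi_i$ following some $K[x]/(x^n)\to K[x]/(x^i)$ --- done by the same lifting of basis vectors.

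Part (2) is entirely parallel with $b_{i+1}=\iota_{i+1}$ in place of $a_{i-1}=\pi_i$. For the approximation property when $j>i$: every $\phi\in\Hom_R(K[x]/(x^i),K[x]/(x^j))$ satisfies $\phi(1)\in x^{j-i}\cdot K[x]/(x^j)\subset x\cdot K[x]/(x^j)$, so $\phi(1)/x\in K[x]/(x^j)$ lifts to $z\in\Hom_R(K[x]/(x^{i+1}),K[x]/(x^j))$ with $z\circ\iota_{i+1}=\phi$. The monomorphism assertion is immediate from $\iota_{i+1}$ being injective as an $R$-module map: for $t\in T_i\subset\Hom_R(-,K[x]/(x^i))$, $g_i(t)=\iota_{i+1}\circ t$ vanishes only if $t=0$. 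Minimality is again automatic since $T_{i+1}$ is indecomposable and $g_i\ne 0$.

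The main obstacle is purely organisational --- keeping the conventions (quiver arrow direction, source/target of elements in $e_j\Lambda e_i$, which of $\pi_i$ and $\iota_i$ corresponds to $a$ and $b$) mutually consistent, with the consistency verified by checking the defining relations $a_1b_2=0$ and $a_ib_{i+1}=b_ia_{i-1}$. Once the dictionary is fixed, every assertion reduces to the elementary observation that a $K$-linear $R$-module map $K[x]/(x^i)\to K[x]/(x^j)$ factors or lifts appropriately through $K[x]/(x^{i\pm 1})$.
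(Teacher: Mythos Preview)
Your proof is correct and rests on the same key ingredient as the paper's: the isomorphism $\Lambda\cong\End_\Lambda(T)$ from Theorem~\ref{3.7}, which identifies $\Hom_\Lambda(T_i,T_j)$ with $e_j\Lambda e_i$. The paper uses this more abstractly, invoking the categorical equivalence $\Hom_\Lambda(T,-):\add T\to\add\Lambda$ to transport the approximation problem to projectives $e_i\Lambda$, where the answer is read off directly from the quiver; you instead push one level further down to $\Hom_R(K[x]/(x^i),K[x]/(x^j))$ and verify the factorisation statements by hand. Both routes are short, and yours makes explicit what the paper leaves as a standard fact about projectives over a path algebra.

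One minor remark: your argument that $f_i\neq 0$ via the socle is more elaborate than needed --- since left multiplication $\Lambda\to\End_\Lambda(T)$ is an isomorphism and $a_{i-1}\neq 0$ in $\Lambda$, the induced map $a_{i-1}\cdot(-):e_iT\to e_{i-1}T$ is automatically nonzero. Also, when you write ``$\phi(1)/x$'' in part (2), this should be understood as choosing a preimage under multiplication by $x$ (which exists since $\phi(1)\in x^{j-i}K[x]/(x^j)$, and one can take $z(1)=x^{j-i-1}p$ when $\phi(1)=x^{j-i}p$), not as literal division; the argument goes through once this is made precise.
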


\begin{proof}

(1) Since the left multiplication gives an isomorphism
$\Lambda\cong\End_\Lambda(T)$, we have an equivalence
$\Hom_\Lambda(T,-):\add T\cong\add\Lambda$. The minimal left
$\add(\bigoplus_{j=1}^{i-1}e_j\Lambda)$-approximation of
$e_i\Lambda$ is $e_i\Lambda\to e_{i-1}\Lambda$, which is given by
the left multiplication of $a_{i-1}$. Thus the former assertion
follows. The latter assertion follows from
$f_i(M_{i})=a_{i-1}M_{i}=M_{i-1}$.

(2) One can prove the first assertion similarly to (1). Since the
left multiplication of $b_{i+1}$ gives a monomorphism $P_{i}\to
P_{i+1}$, its restriction $g_i$ is also a monomorphism.
\end{proof}

Let $T\in\langle I_1,\ldots,I_{n-1}\rangle$ be a tilting $\Lambda$-module.
For $0\le i\le n$, we define
\[\mu_{[i+1,n]}(T):=\mu_{i+1}\mu_{i+2}\cdots\mu_n(T)\in\sttilt\Lambda\]
as the successive mutation at the direct summands $T_n,T_{n-1},\ldots,T_{i+1}$
(Definition \ref{mutation}), where $\mu_{[i+1, n]}(T):=T$ for $i=n$.
The following result plays a crucial role.

\begin{proposition}\label{4.5}
Let $T\in\langle I_1,\ldots,I_{n-1}\rangle$.
For $0\le i\le n$, we have
\begin{enumerate}[\rm(1)]
\item $\mu_{[i+1,n]}(T)=\bigoplus_{j=1}^iT_j\oplus\bigoplus_{j=i}^{n-1}\overline{T_j}$.
\item $T>\mu_n(T)>\mu_{[n-1,n]}(T)>\cdots>\mu_{[1,n]}(T)$.
\item Let $i\le j\le n-1$. Then $\overline{T_j}=0$ if and only if $S_{n-j}$ is not a composition factor of $\mu_{[i+1,n]}(T)$.
\item $(\mu_{[i+1,n]}(T),P)$ is a support $\tau$-tilting pair for $P:=\bigoplus_{i\le j\le n-1,\overline{T_j}=0}P_{n-j}$.
\end{enumerate}
\end{proposition}

\begin{proof} (1) We prove the assertion by descending induction on $i$.
It is clear for $i=n$.

Now we assume that $\mu_{[i+1,n]}(T)$ is $\bigoplus_{j=1}^i
T_j\oplus\bigoplus_{j=i}^{n-1} \overline{T_j}$. In the following we
calculate $\mu_{[i,n]}(T)$ by applying Theorem \ref{2.20}

Firstly, we show that $T_i\notin
\Fac(\bigoplus_{j=1}^{i-1}T_j\oplus\bigoplus_{j=i}^{n-1}
\overline{T_j})$. By Lemma \ref{4.3}(3), we have
$\Hom_\Lambda(\overline{T_j},T_i)=0$. Thus we only have to show
$T_i\notin\Fac(\bigoplus_{j=1}^{i-1}T_j)$. Otherwise, since $TM=M$
holds as ideals of $\Lambda$, we have
$e_iM=T_iM\in\Fac(\bigoplus_{j=1}^{i-1}T_jM)=\Fac(\bigoplus_{j=1}^{i-1}e_jM)$.
This is impossible by the explicit form of $M$. Thus the assertion
follows.

Next, by Lemma \ref{4.4}(1) and the fact that the natural epimorphism $\pi_i:T_i\rightarrow\overline{T_i}$ is a left
$(\mod\overline{\Lambda})$-approximation of $T_i$, a left
$\add(\bigoplus_{j=1}^{i-1}T_j\oplus\bigoplus_{j=i}^{n-1}\overline{T_j})$-approximation
of $T_i$ is given by $f:={f_i\choose\pi_i}:T_i\to
T_{i-1}\oplus\overline{T_i}$.

Finally, we have a commutative diagram of exact sequences
\[\xymatrix{
0\ar[r]&M_{i}\ar[r]\ar@{=}[d]&T_i\ar[r]^{\pi_i}\ar[d]^{f_i}&\overline{T_i}\ar[r]\ar[d]&0\\
&M_{i}\ar[r]&T_{i-1}\ar[r]&{\rm Coker} f\ar[r]&0, }\] we have ${\rm
Coker} f=T_{i-1}/f_i(M_{i})=\overline{T_{i-1}}$ by Lemma
\ref{4.4}(1). This is indecomposable by Lemma \ref{4.3}(2), and we
have $\mu_{[i,n]}(T)=
\bigoplus_{j=1}^{i-1}T_j\oplus\bigoplus_{j=i-1}^{n-1}
\overline{T_j}$ by Theorem \ref{2.20}. Thus the assertion follows.

(2) By the proof of (1) we get $\mu_{[i,n]}(T)$ is a left mutation
of $\mu_{[i+1,n]}(T)$, and hence the assertion holds.

(3) Notice that the $\Lambda$-module $\overline{P_j}$ has the socle $S_{n-j}$.
Since $\overline{T_j}$ is a submodule of $\overline{P_j}$, the ``if'' part follows.
Conversely, assume $\overline{T_j}=0$. Since $\overline{T}$ is a two-sided
ideal of the selfinjective $K$-algebra $\overline{\Lambda}$,
our assumption $\overline{T_j}=0$ implies that the $\overline{\Lambda}$-module
$\overline{T}$ does not have $S_{n-j}$ as a composition factor.
Since $M_k$ with $1\le k\le j$ does not have $S_{n-j}$ as a composition factor,
so does $\mu_{[i+1,n]}(T)$.

(4) This is immediate from (3).
\end{proof}

Now we give an example of calculation given in Proposition
\ref{4.5}.

\begin{example}\label{4.6}
Let $\Lambda$ be the Auslander algebra of $K[x]/(x^4)$. Taking the
trivial tilting module $\Lambda$, then $\mu_4(\Lambda)$,
$\mu_3\mu_4(\Lambda)$, $\mu_2\mu_3\mu_4(\Lambda)$ and
$\mu_1\mu_2\mu_3\mu_4(\Lambda)$ are given as follows.
\[\begin{xy}
(0,0)*+{ \left[\begin{smallmatrix} 1\\ &2\\
&&3\\&&&4\\ \end{smallmatrix}\middle| \begin{smallmatrix} &2\\ 1&&3\\
&2&&4\\&&3\\&&&4\end{smallmatrix}\middle| \begin{smallmatrix} &&3\\ &2&&4\\
1&&3\\&2&&4\\&&3\\&&&4\\ \end{smallmatrix}\middle| \begin{smallmatrix}&&&4 \\&&3\\ &2&&4\\
1&&3\\&2&&4\\&&3\\&&&4\\ \end{smallmatrix}\right]}="1",
(50,0)*+{ \left[\begin{smallmatrix} 1\\ &2\\
&&3\\&&&4\\ \end{smallmatrix}\middle| \begin{smallmatrix} &2\\ 1&&3\\
&2&&4\\&&3\\&&&4\end{smallmatrix}\middle| \begin{smallmatrix} &&3\\ &2&&4\\
1&&3\\&2&&4\\&&3\\&&&4\\ \end{smallmatrix}\middle|
\begin{smallmatrix}&&3 \\&2\\1\end{smallmatrix}\right]}="2",
(100,0)*+{ \left[\begin{smallmatrix} 1\\ &2\\
&&3\\&&&4\\ \end{smallmatrix}\middle| \begin{smallmatrix} &2\\ 1&&3\\
&2&&4\\&&3\\&&&4\end{smallmatrix}\middle|\begin{smallmatrix} &2\\ 1&&3\\
&2\end{smallmatrix}\middle| \begin{smallmatrix}&&3
\\&2\\1\end{smallmatrix}\right]}="3",
(100,-20)*+{ \left[\begin{smallmatrix} 1\\ &2\\
&&3\\&&&4\\ \end{smallmatrix}\middle| \begin{smallmatrix} 1\\ &2\\
&&3\end{smallmatrix}\middle|\begin{smallmatrix} &2\\ 1&&3\\
&2\end{smallmatrix}\middle| \begin{smallmatrix}&&3
\\&2\\1\end{smallmatrix}\right]}="4",
(50,-20)*+{\left[ \begin{smallmatrix} 1\\ &2\\
&&3\end{smallmatrix}\middle|\begin{smallmatrix} &2\\ 1&&3\\
&2\end{smallmatrix}\middle| \begin{smallmatrix}&&3
\\&2\\1\end{smallmatrix}\right]}="5",
\ar^{\mu_4}"1";"2",\ar^{\mu_3}"2";"3",\ar^{\mu_2}"3";"4",\ar_{\mu_1}"4";"5",
\end{xy}\]
\end{example}

For $0\leq i\leq n$, we denote by $\mu_{[i+1,n]}(\tilt\Lambda)$ the set of isomorphism
classes of support $\tau$-tilting $\Lambda$-modules consisting of
$\mu_{[i+1,n]}(T)$ for any $T\in\tilt\Lambda$.
Then we have the following proposition.

\begin{lemma}\label{4.7}
\begin{enumerate}[\rm(1)]
 \item  For any $0\leq i\leq n$, there is a bijection
$\tilt\Lambda\rightarrow \mu_{[i+1,n]}(\tilt\Lambda)$, which maps $T$ to $\mu_{[i+1,n]}(T)$.

\item We have $\mu_{[i+1,n]}(\tilt\Lambda)\cap
\mu_{[j+1,n]}(\tilt\Lambda)=\emptyset$ for any $0\leq i\neq j\leq
n$.

\end{enumerate}
\end{lemma}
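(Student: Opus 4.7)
My plan is to deduce both parts from the explicit structural description of $\mu_{[i+1,n]}(T)$ provided by Proposition \ref{4.5}(1), combined with the socle information in Lemma \ref{4.3}(1)(2).

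For part (1), fix $0\le i\le n$ and let $\varphi_i:\tilt\Lambda\to\mu_{[i+1,n]}(\tilt\Lambda)$ denote the map $T\mapsto\mu_{[i+1,n]}(T)$. Surjectivity is immediate from the definition of the target set, so I would concentrate on injectivity by reconstructing $T$ canonically from $U:=\varphi_i(T)$. Proposition \ref{4.5}(1) gives
\[
U = \bigoplus_{j=1}^{i} T_j \;\oplus\; \bigoplus_{j=i}^{n-1} \overline{T}_j,
\]
where $T_j=e_jT$. By Lemma \ref{4.3}(1) each $T_j$ has socle $S_n$, while by Lemma \ref{4.3}(2) each nonzero $\overline{T}_j$ has simple socle $S_{n-j}\neq S_n$. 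Hence the indecomposable summands of $U$ separate cleanly by socle: those with socle $S_n$ are exactly $T_1,\ldots,T_i$, individually identified by their tops $S_1,\ldots,S_i$; the remaining summands are the nonzero $\overline{T}_j$ with $j\in\{i,\ldots,n-1\}$, each identified by its socle $S_{n-j}$. I would then recover every $T_j$: for $j\le i$ it is a direct summand of $U$; for $i\le j\le n-1$ it is recovered from $\overline{T}_j$ via Lemma \ref{4.3}(4) when $\overline{T}_j\neq 0$, and otherwise $T_j=M_j$; and finally $T_n=P_n$ always, since $P_n$ is a direct summand of every basic tilting $\Lambda$-module (cf.\ the proof of Proposition \ref{3.11}(2)). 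This exhibits $T$ as uniquely determined by $U$, establishing injectivity.

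For part (2), I would extract from the above analysis the invariant that the number of indecomposable direct summands of $U=\mu_{[i+1,n]}(T)$ with socle $S_n$ is exactly $i$ (the boundary case $i=0$ giving zero such summands since the first sum is empty). Because this count depends only on the isomorphism class of $U$ and not on any choice of presentation as a mutation, any given $U$ can lie in at most one set $\mu_{[i+1,n]}(\tilt\Lambda)$, yielding the asserted disjointness.

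The main subtlety I anticipate is the possibility $\overline{T}_j=0$ in the reconstruction step of (1): in that case $T_j$ must equal the canonical submodule $M_j$, which is nonetheless recoverable without ambiguity. One should also verify that the socle-count argument in (2) is uniform across all $i$ including the boundary $i=0$; this is clean because each $T_j$ for $j\le i$ is a nonzero summand of the tilting module $T$ with socle $S_n$, whereas every $\overline{T}_j$ appearing in the second sum has socle different from $S_n$.
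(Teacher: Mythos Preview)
Your approach to part (2) is essentially the same as the paper's: the paper counts how many indecomposable summands of $\mu_{[i+1,n]}(T)$ have $S_n$ as a \emph{composition factor} (using Lemma~\ref{4.3}(1)(3)), while you count those with \emph{socle} $S_n$; both invariants equal $i$ and the arguments are interchangeable.

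For part (1), however, the paper's proof is a single line: each $\mu_j:\sttilt\Lambda\to\sttilt\Lambda$ is a bijection (mutation is an involution), so the composite $\mu_{[i+1,n]}$ is injective on $\tilt\Lambda$ and surjective onto its image by definition. Your reconstruction argument is considerably more elaborate, and while its overall strategy works, it contains an error. You assert that the summands $T_1,\ldots,T_i$ are ``individually identified by their tops $S_1,\ldots,S_i$''. This is false: for instance when $n=3$ and $T=I_2I_1$, one has $e_1T=\rad P_1$, whose top is $S_2$, not $S_1$ (cf.\ Example~\ref{3.13}). In general $T_j=e_jT$ is a proper submodule of $P_j$, and there is no reason its top should be $S_j$.

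Fortunately this individual identification is unnecessary. From $U$ you recover the \emph{unordered} collection $\{T_1,\ldots,T_i\}$ as the summands with socle $S_n$, and you recover each $T_j$ for $i\le j\le n$ individually as you describe. Hence you recover $T=\bigoplus_{j=1}^nT_j$ as a $\Lambda$-module up to isomorphism, and then Theorem~\ref{3.10}(2) pins down $T$ as an element of $\langle I_1,\ldots,I_{n-1}\rangle$. So your argument can be salvaged, but the paper's route via the bijectivity of mutation is far more direct.
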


\begin{proof} (1) This is clear since each $\mu_j:\sttilt\Lambda\to\sttilt\Lambda$ is a bijection.

(2) By Proposition \ref{4.5} and Lemma \ref{4.3}(1)(3), the first
$i$ direct summands of $\mu_{[i+1,n]}(T)$ have a composition factor
$S_n$, and the other summands do not have a composition factor
$S_n$. Thus the assertion follows.
\end{proof}

Let $U=\mu_{[i+1,n]}(T)\in\sttilt\Lambda$ with $T\in\langle I_1,\ldots,I_{n-1}\rangle$ and $0\le i\le n$, given in Proposition \ref{4.5}(1).
For each $1\le k\le n$, we define $\mu_k(U)$ by
\begin{equation}\label{define mu_i}
\mu_k(U)=\left\{\begin{array}{ll}
\mbox{the mutation of $U$ at $T_k$}&\mbox{if $1\le k\le i$,}\\
\mbox{the mutation of $U$ at $\overline{T_{k-1}}$}&\mbox{if $i+1\le k\le n$ and $\overline{T_{k-1}}\neq0$,}\\
\mbox{the mutation of $U$ at $P_{n-k+1}$}&\mbox{if $i+1\le k\le n$ and $\overline{T_{k-1}}=0$,}\end{array}\right.
\end{equation}
where the third case is well-defined by Proposition \ref{4.5}(4).
We have the following relations of mutation in $\sttilt\Lambda$
corresponding to Lemma \ref{4.1}(2).

\begin{proposition}\label{4.9}
Let $T\in\langle I_1,\ldots,I_{n-1}\rangle$, $0\le i\le n$ and
$U:=\mu_{[i+1,n]}(T)$.
\begin{enumerate}[\rm(1)]
\item For any $1\leq k\leq i-1$, we have
$\mu_k(U)=\mu_{[i+1,n]}(\mu_k(T))$. Moreover, $T>\mu_k(T)$ if and
only if $U>\mu_k(U)$.
\item For any $i+2\leq k\leq n$, we have
$\mu_k(U)=\mu_{[i+1,n]}(\mu_{k-1}(T))$. Moreover, $T>\mu_{k-1}(T)$
if and only if $U>\mu_k(U)$.
\item We have \[\mu_k\mu_{[i+1,n]}(T)=
   \begin{cases}
   \mu_{[i+1,n]}\mu_k(T) &\mbox{$k\leq i-1$,}\\
   \mu_{[i,n]}(T)&\mbox{$k=i$,}\\
   \mu_{[i+2,n]}(T)&\mbox{$k=i+1$,}\\
   \mu_{[i+1,n]}\mu_{k-1}(T) &\mbox{$k\geq i+2$.}\\
   \end{cases}\]
\end{enumerate}
\end{proposition}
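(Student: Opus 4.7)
The plan is to use Proposition \ref{4.5}, which gives the explicit decomposition $\mu_{[i+1,n]}(T)=\bigoplus_{j=1}^iT_j\oplus\bigoplus_{j=i}^{n-1}\overline{T}_j$, as the central structural tool. First I dispose of the cases $k=i$ and $k=i+1$ of (3): the first is the definition of $\mu_{[i,n]}$, and the second follows from the involutivity $\mu_{i+1}^2=\mathrm{id}$ of support $\tau$-tilting mutation.

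For the identity part of (1), fix $1\leq k\leq i-1$ and set $T'=\mu_k(T)$. Since $k\neq n$, Proposition \ref{3.11} yields $T'\in\tilt\Lambda$ with $T'_j:=e_jT'=T_j$ for $j\neq k$ and $T'_k$ a new indecomposable. Applying Proposition \ref{4.5} to $T'$, and using $k<i$ so that $T'_k$ lies in the first block of the decomposition while $\overline{T'_j}=\overline{T}_j$ for every $j\geq i$, we see that $\mu_{[i+1,n]}(T')$ and $U$ coincide except at the $k$-th indecomposable summand; by uniqueness of mutation, $\mu_k(U)=\mu_{[i+1,n]}(T')$. The identity in (2) is obtained symmetrically: for $i+2\leq k\leq n$, the $k$-th summand of $U$ is $\overline{T}_{k-1}$, and with $T''=\mu_{k-1}(T)\in\tilt\Lambda$ one checks that $\mu_{[i+1,n]}(T'')$ differs from $U$ only at position $k$, where $\overline{T''_{k-1}}$ replaces $\overline{T}_{k-1}$.

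For the order comparisons, I plan to compute the exchange sequence of Theorem \ref{2.20} for mutation at $T_k$ in both $T$ and $U$ and show they match. By Lemma \ref{4.4}, the minimal left $\add(T/T_k)$-approximation of $T_k$ is $T_k\to T_{k-1}\oplus T_{k+1}$, and the same map serves as the minimal left $\add(U/T_k)$-approximation: both $T_{k\pm 1}$ remain summands of $U$ (since $k\pm1\leq i$), and any morphism $T_k\to\overline{T}_j$ with $j\geq i$ factors through the projection $T_j\twoheadrightarrow\overline{T}_j$, hence through $T_{k\pm 1}$ along the quiver relations. Thus the cokernel is the same in both cases, so the mutation is of the same type (left or right) in $U$ as in $T$. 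For (2) the analogous comparison uses the exchange sequence at $T_{k-1}$ in $T$ versus at $\overline{T}_{k-1}$ in $U$, with the right-exact functor $\overline{(-)}=-\otimes_\Lambda(\Lambda/M)$ identifying the respective cokernels.

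The main obstacle is the factoring claim in the order step: verifying that morphisms $T_k\to\overline{T}_j$ contribute nothing beyond $T_k\to T_{k\pm1}$ to the minimal left approximation requires careful inspection of $\Hom_\Lambda(T_k,\overline{T}_j)$ via the quiver-with-relations presentation together with the fact that each $T_j$ is a concrete ideal in $\Lambda$. An alternative route if this direct analysis proves cumbersome is induction on $n-i$: write $U=\mu_{i+1}(V)$ with $V=\mu_{[i+2,n]}(T)$, apply the induction hypothesis to swap $\mu_k$ past $\mu_{[i+2,n]}$, and then reduce to the single commutation $\mu_k\mu_{i+1}=\mu_{i+1}\mu_k$ (with direction-preservation) for $|k-(i+1)|\geq 2$, which parallels the ideal commutation $I_kI_{i+1}=I_{i+1}I_k$ of Proposition \ref{3.15}(2).
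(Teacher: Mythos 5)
Your treatment of the mutation identities in (1) and (2) is essentially the paper's own argument: apply Proposition \ref{4.5} to $\mu_k(T)$ (resp.\ $\mu_{k-1}(T)$), compare indecomposable summands with those of $U$, and invoke uniqueness of mutation; the paper handles $k=i$ and $k=i+1$ exactly as you do. One small point you gloss over in (2): you must check $\overline{T''}_{k-1}\not\cong\overline{T}_{k-1}$, since otherwise $\mu_{[i+1,n]}(T'')$ would equal $U$ rather than its mutation; the paper gets this from Lemma \ref{4.3}(4) (recoverability of $T_{k-1}$ from $\overline{T}_{k-1}$).

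The genuine gap is in your order comparison. First, the inference ``same minimal left approximation, hence same cokernel, hence same direction of mutation'' does not work: Theorem \ref{2.20} \emph{presupposes} that $T$ is the Bongartz completion of $V$, i.e.\ that the mutation is already known to be a left mutation, so it cannot be used to detect the direction. The direction is governed by the criterion $U>\mu_k(U)\Leftrightarrow U_k\notin\Fac(U/U_k)$ from \cite{AIR}, which is about right approximations from $\add(U/U_k)$, not about the cokernel of the left approximation. Second, your factorization claim --- that every morphism $T_k\to\overline{T}_j$ factors through $T_{k\pm1}$ --- is exactly the unverified step you flag, and it is not needed: the paper's argument is a two-line $\Fac$-containment. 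For (1), every summand of $U/U_k$ is a quotient of a summand of $T/T_k$ (each $\overline{T}_j$ is a quotient of $T_j$), so $\Fac(U/U_k)\subseteq\Fac(T/T_k)$ and $T_k\notin\Fac(T/T_k)$ forces $U_k=T_k\notin\Fac(U/U_k)$; for (2), right-exactness of $\overline{(\ )}$ turns $T_{k-1}\in\Fac(T/T_{k-1})$ into $\overline{T}_{k-1}\in\Fac(\overline{T}/\overline{T}_{k-1})\subseteq\Fac(U/U_k)$; and one implication suffices since $\mu_k$ is an involution that reverses the comparison. Finally, your fallback route is circular: the commutation $\mu_k\mu_{i+1}=\mu_{i+1}\mu_k$ for $|k-(i+1)|\geq 2$ is not a general fact about support $\tau$-tilting mutation (exchange graphs are generally not cubical --- already for the path algebra of $A_2$ the Hasse quiver of $\sttilt$ is a pentagon), and in this paper it is Lemma \ref{4.b}(2), whose proof \emph{uses} Proposition \ref{4.9}(3); the analogy with the ideal relation $I_kI_{i+1}=I_{i+1}I_k$ of Proposition \ref{3.15}(2) is suggestive but is not by itself a proof.
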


\begin{proof}
By Proposition \ref{4.5}, we have
$U=\bigoplus_{j=1}^{i}T_j\oplus\bigoplus_{j=i}^{n-1}\overline{T_j}$.

(1) Let $V:=\mu_k(T)=\bigoplus_{j=1}^{k-1} T_j\oplus
V_k\oplus\bigoplus_{j=k+1}^{n}T_j$. Then $V$ is a tilting
$\Lambda$-module with $V_k\not\cong T_k$, and applying Proposition
\ref{4.5} to $V$, we have
$\mu_{[i+1,n]}(V)=\bigoplus_{j=1}^{k-1}T_j\oplus
V_k\oplus\bigoplus_{j=k+1}^{i}T_j\oplus\bigoplus_{j=i}^{n-1}\overline{T_j}$.
Since $U$ and $\mu_{[i+1,n]}(V)$ have the same indecomposable direct
summands except the $k$-th one, we have $\mu_k(U)=\mu_{[i+1,n]}(V)$
as desired.

To prove the latter one, it suffices to show that $T>\mu_k(T)$
implies $U>\mu_k(U)$. The condition $T>\mu_k(T)$ is equivalent to
$T_k\notin\Fac(T/T_k)$. Since $U/U_k$ belongs to $\Fac(T/T_k)$ by
the explicit form in Proposition \ref{4.5}, we have
$U_k=T_k\notin\Fac(U/U_k)$. Therefore $U>\mu_k(U)$.

(2) Let $V:=\mu_{k-1}(T)=\bigoplus_{j=1}^{k-2} T_j\oplus
V_{k-1}\oplus\bigoplus_{j=k}^{n}T_j$. Then $V$ is a tilting
$\Lambda$-module with $V_{k-1}\not\cong T_{k-1}$, and applying
Proposition \ref{4.5} to $V$, we have
$\mu_{[i+1,n]}(V)=\bigoplus_{j=1}^{i}T_j\oplus
\bigoplus_{j=i}^{k-2}\overline{T_j}\oplus\overline{V_{k-1}}\oplus\bigoplus_{j=k}^{n-1}\overline{T_j}$.
Since $\overline{V_{k-1}}\not\cong\overline{T_{k-1}}$ holds by
Lemma \ref{4.3}(4), $U$ and $\mu_{[i+1,n]}(V)$ have the same
indecomposable direct summands except the $k$-th one. Thus we have
$\mu_k(U)=\mu_{[i+1,n]}(V)$ as desired.

To show the latter one, it suffices to show that $T<\mu_{k-1}(T)$
implies $U<\mu_k(U)$. The condition $T<\mu_{k-1}(T)$ is equivalent
to $T_{k-1}\in\Fac(T/T_{k-1})$. Since
$\overline{T}/\overline{T_{k-1}}$ belongs to $\Fac(U/U_k)$ by the
explicit form in Proposition \ref{4.5}, we have $U_k=\overline{T_{k-1}}\in
\Fac(\overline{T}/\overline{T_{k-1}})\subseteq\Fac(U/U_k)$. Therefore
$U<\mu_k(U)$.

(3) Immediate from (1) and (2).
\end{proof}

Immediately we have the following complete classification of support
$\tau$-tilting $\Lambda$-modules and indecomposable $\tau$-rigid
$\Lambda$-modules.

\begin{theorem}\label{4.14}
\begin{enumerate}[\rm(1)]
\item We have $\sttilt\Lambda=\bigsqcup_{i=0}^{n}\mu_{[i+1,n]}(\tilt\Lambda)$.
In particular, $\#\sttilt\Lambda=(n+1)!$,
and the mutation $\mu_k$ for each $1\le k\le n$
is well-defined on $\sttilt\Lambda$ by \eqref{define mu_i}.
\item Any support $\tau$-tilting $\Lambda$-module has a form
$T_1\oplus\cdots\oplus
T_i\oplus\overline{T_i}\oplus\cdots\oplus\overline{T_{n-1}}$ for
some $0\le i\le n$ and $T\in\langle I_1,\ldots,I_{n-1}\rangle$ with
$T_j:=e_jT$ for $1\le j\le n$. Moreover such $i$ and $T$ are
uniquely determined.
\item Any indecomposable $\tau$-rigid module has a form $T_i=e_iT$ or $\overline{T_i}$
for some $T\in\langle I_1,\ldots,I_{n-1}\rangle$ and $1\le i\le n$.
\item The statements (1) and (2) hold for $\Lambda^{\op}$-modules.
\end{enumerate}
\end{theorem}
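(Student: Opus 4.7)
The plan is to deduce statement (1) by showing that the manifestly finite set
\[
U \;:=\; \bigsqcup_{i=0}^{n}\mu_{[i+1,n]}(\tilt\Lambda)
\]
forms a single connected component of the Hasse quiver ${\rm H}(\sttilt\Lambda)$, and then invoking Proposition \ref{2.22}. The cardinality $\#U = (n+1)!$ is immediate from Lemma \ref{4.7}, which asserts that the union is disjoint and each piece is in bijection with $\tilt\Lambda$, which has $n!$ elements by Theorem \ref{3.17}(2).

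The main step is to verify that $U$ is both closed under mutation and internally connected. Closure is the content of Proposition \ref{4.9}(3): for any $\mu_{[i+1,n]}(T) \in U$ and any mutation index $k \in \{1,\ldots,n\}$, the mutation $\mu_k\mu_{[i+1,n]}(T)$ is one of $\mu_{[i+1,n]}\mu_k(T)$ (for $k \le i-1$ or $k \ge i+2$), $\mu_{[i,n]}(T)$ (for $k = i$), or $\mu_{[i+2,n]}(T)$ (for $k = i+1$), and each of these lies in some $\mu_{[j+1,n]}(\tilt\Lambda) \subseteq U$. For connectivity, Proposition \ref{4.5}(2) supplies a descending chain of left mutations $T > \mu_n(T) > \cdots > \mu_{[i+1,n]}(T)$ linking any element of $\mu_{[i+1,n]}(\tilt\Lambda)$ to the corresponding tilting module in $\tilt\Lambda$, while the tilting modules themselves are mutually connected inside $\tilt\Lambda$ because Theorem \ref{3.17}(3) identifies $\tilt\Lambda$ with $\mathfrak{S}_n$ whose left-order Hasse quiver is connected. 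Hence $U$ is a finite connected subset of ${\rm H}(\sttilt\Lambda)$ closed under mutation, i.e.\ a finite connected component, and Proposition \ref{2.22} forces $U = \sttilt\Lambda$, giving (1).

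Granting (1), statement (2) is immediate: the explicit shape of each $\mu_{[i+1,n]}(T)$ provided by Proposition \ref{4.5}(1) exhibits every support $\tau$-tilting module in the claimed form, and the uniqueness of the pair $(i,T)$ follows from Lemma \ref{4.7}—disjointness of the decomposition isolates $i$, and the bijectivity of $\mu_{[i+1,n]}:\tilt\Lambda\to\mu_{[i+1,n]}(\tilt\Lambda)$ isolates $T$. For (3), every indecomposable $\tau$-rigid module is a direct summand of some basic support $\tau$-tilting module by Lemma \ref{2.18}, so by (2) it must be of the form $T_j$ or $\overline{T}_j$ for some $T \in \langle I_1,\ldots,I_{n-1}\rangle$ and some $1 \le j \le n$. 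Finally, (4) follows by observing that $\Lambda^{\op}$ is the Auslander algebra of $R^{\op} \cong R = K[x]/(x^n)$, so the preceding argument applies verbatim on the opposite side.

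The principal obstacle, already handled in Proposition \ref{4.9}, is verifying that each of the four regimes of mutation of $\mu_{[i+1,n]}(T)$ remains inside $U$; once this mutation-closure is secure, the theorem is an assembly of Proposition \ref{2.22} with Lemmas \ref{4.7}, \ref{4.3}, \ref{2.18} and Proposition \ref{4.5}.
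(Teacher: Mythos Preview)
Your proposal is correct and follows essentially the same approach as the paper: establish that $U$ is a finite, mutation-closed, connected subset of ${\rm H}(\sttilt\Lambda)$ and invoke Proposition~\ref{2.22}, then read off (2)--(4) from Proposition~\ref{4.5}, Lemma~\ref{4.7}, and Lemma~\ref{2.18}. If anything, you supply more detail than the paper does on connectivity (the paper asserts it in one line) and on (4) (the paper omits the argument entirely), but the overall route is the same.
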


\begin{proof}
(1) By Lemma \ref{4.7},
$\bigcup_{i=0}^{n}\mu_{[i+1,n]}(\tilt\Lambda)$ is a disjoint union
and contains precisely $(n+1)!$ elements.
By Proposition \ref{4.9}(3), $\bigsqcup_{i=0}^{n}\mu_{[i+1,n]}(\tilt\Lambda)$ is
closed under mutation. This is a finite connected component of ${\rm
H}(\sttilt\Lambda)$. By Proposition
\ref{2.22}, we have
$\sttilt\Lambda=\bigsqcup_{i=0}^{n}\mu_{[i+1,n]}(\tilt\Lambda)$.

(2) is clear by (1) and Proposition \ref{4.5}. (3) is a straight
result of (2) and Lemma \ref{2.18}.
\end{proof}

The following lemma is also needed.

\begin{lemma}\label{4.b}
 Let $U\in\sttilt\Lambda$ and $1\le j,k\le n$.
 \begin{enumerate}[\rm(1)]
 \item $\mu_j\mu_j(U)=U$.
 \item If $|j-k|\ge2$, then $\mu_j\mu_k(U)= \mu_k\mu_j(U)$.
 \item If $|j-k|=1$, then $\mu_j\mu_{k}\mu_{j}(U)=\mu_{k}\mu_{j}\mu_{k}(U)$.
\end{enumerate}
\end{lemma}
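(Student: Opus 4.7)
The plan is to exploit the classification of $\sttilt\Lambda$ from Theorem~\ref{4.14} together with the mutation-rewriting rules of Proposition~\ref{4.9}(3). Part (1) is immediate from general $\tau$-tilting theory~\cite{AIR}: each $\mu_j$ is an involution on $\sttilt\Lambda$, so $\mu_j\mu_j(U)=U$.

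For parts (2) and (3), I would first use Theorem~\ref{4.14}(2) to write $U=\mu_{[i+1,n]}(T)$ for a unique $T\in\langle I_1,\ldots,I_{n-1}\rangle$ and $0\le i\le n$. Proposition~\ref{4.9}(3) then lets me rewrite any $\mu_k(U)$ as $\mu_{[i'+1,n]}\mu_{k'}(T)$ for suitable $i',k'$, with the exception of the boundary indices $k\in\{i,i+1\}$, which alter the outer operator $\mu_{[i'+1,n]}$. Applying this rewrite repeatedly reduces both sides of (2) and (3) to a common normal form $\mu_{[i'+1,n]}\mu_{k_1}\cdots\mu_{k_r}(T)$, and the verification becomes a case analysis on the positions of $j,k$ relative to the cut $\{i,i+1\}$.

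For (2), when neither of $j,k$ lies in $\{i,i+1\}$, both sides reduce to $\mu_{[i+1,n]}$ applied to a commutation $\mu_{j'}\mu_{k'}(T)=\mu_{k'}\mu_{j'}(T)$ with $|j'-k'|\ge 2$, and I would derive this from Proposition~\ref{3.15}(2) ($I_{j'}I_{k'}=I_{k'}I_{j'}$) together with Corollary~\ref{3.a}, which says $\mu_{i_1}\cdots\mu_{i_l}(\Lambda)$ depends only on the element of $\mathfrak{S}_n$ it represents. When one of $j,k$ equals $i$ or $i+1$, a direct unwinding of both sides through Proposition~\ref{4.9}(3) produces the same expression, and the identity holds.

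For (3), writing $\{j,k\}=\{m,m+1\}$, the bulk cases $m+1\le i-1$ and $m\ge i+2$ reduce similarly to the braid relation $\mu_{j'}\mu_{k'}\mu_{j'}(T)=\mu_{k'}\mu_{j'}\mu_{k'}(T)$ for tilting $T$, which follows from Proposition~\ref{3.15}(3) via Corollary~\ref{3.a}. The three boundary values $m\in\{i-1,i,i+1\}$ need direct computation: for each one I would apply Proposition~\ref{4.9}(3) three times on each side and check that both sides collapse to the same normal form; e.g.\ for $m=i$ both $\mu_i\mu_{i+1}\mu_i(U)$ and $\mu_{i+1}\mu_i\mu_{i+1}(U)$ reduce to $\mu_{[i+1,n]}\mu_i(T)$. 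The main obstacle is not any deep step but the sheer bookkeeping in (3): tracking how the outer $\mu_{[i+1,n]}$ interacts with three successive inner mutations across each boundary case requires care, but organising the case split by the position of $\{j,k\}$ relative to $\{i,i+1\}$ keeps the analysis tractable.
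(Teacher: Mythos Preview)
Your proposal is correct and follows essentially the same approach as the paper's proof: write $U=\mu_{[i+1,n]}(T)$ via Theorem~\ref{4.14}, then use Proposition~\ref{4.9}(3) to push the mutations $\mu_j,\mu_k$ past $\mu_{[i+1,n]}$ by a case split according to the positions of $j,k$ relative to $\{i,i+1\}$, reducing the bulk cases to the commutation and braid relations on $\tilt\Lambda$ (via Proposition~\ref{3.15} and Corollary~\ref{3.a}) and handling the boundary cases by direct unwinding. The paper organises the case split slightly differently (seven cases for (2), five for (3), with the braid relations cited as Proposition~\ref{3.15} rather than through Corollary~\ref{3.a}), but the substance is identical.
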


\begin{proof}
(1) is clear from the definition of mutation.

By Theorem \ref{4.14}(1), we can assume that $U=\mu_{[i+1,n]}(T)$
for some $0\leq i\leq n$ and $T\in\langle
I_1,\ldots,I_{n-1}\rangle$. In the following we use Proposition
\ref{4.9}(3) and Proposition \ref{3.15} frequently.

(2) Without loss of generality, we assume $k<j$. We divide the proof
into seven cases.

(a) If $k<j\leq i-1$, then
$\mu_j\mu_k(U)=\mu_j\mu_k\mu_{[i+1,n]}(T)=\mu_j\mu_{[i+1,n]}\mu_k(T)=\mu_{[i+1,n]}\mu_j\mu_k(T)$
$=\mu_{[i+1,n]}\mu_k\mu_j(T)=\mu_k\mu_j\mu_{[i+1,n]}(T)=\mu_k\mu_j(U)$.

(b) If $i+2\leq k<j$, then the proof is very similar to (a).

(c) If $k\leq i-1<i+2\leq j$, then
$\mu_j\mu_k(U)=\mu_j\mu_k\mu_{[i+1,n]}(T)=\mu_{j}\mu_{[i+1,n]}\mu_k(T)=\mu_{[i+1,n]}\mu_{j-1}\mu_k(T)=\mu_{[i+1,n]}\mu_k\mu_{j-1}(T)=\mu_k\mu_{[i+1,n]}\mu_{j-1}(T)=\mu_k\mu_{j}\mu_{[i+1,n]}(T)=\mu_k\mu_j(U)$.

(d) The case $k=i<i+2\leq j$, then
$\mu_j\mu_k(U)=\mu_j\mu_k\mu_{[i+1,n]}(T)=\mu_j\mu_{[i,n]}(T)=\mu_{[i,n]}\mu_{j-1}(T)
=\mu_k\mu_{[i+1,n]}\mu_{j-1}(T)=\mu_k\mu_j\mu_{[i+1,n]}(T)=\mu_k\mu_j(U)$.

(e) If $k\leq i-2<i=j$, then the proof is very similar to (d).

(f) If $k\leq i-1<i+1=j$, then
$\mu_j\mu_k(U)=\mu_j\mu_k\mu_{[i+1,n]}(T)=\mu_{i+1}\mu_{[i+1,n]}\mu_k(T)=\mu_{[i+2,n]}\mu_k(T)=\mu_k\mu_{[i+2,n]}(T)=\mu_k\mu_j\mu_{[i+1,n]}(T)=\mu_k\mu_j(U)$.

(g) If $k=i+1<i+3\leq j$, then the proof is very similar to (d).

(3)  Without loss of generality, we assume $k=j+1$. We also divide
the proof into five cases.

(a) If $j\leq i-2$, then
$\mu_j\mu_{k}\mu_{j}(U)=\mu_j\mu_{k}\mu_{j}\mu_{[i+1,n]}(T)=\mu_{[i+1,n]}\mu_j\mu_{k}\mu_{j}(T)
=\mu_{[i+1,n]}\mu_{k}\mu_{j}\mu_{k}(T)=\mu_{k}\mu_{j}\mu_{k}\mu_{[i+1,n]}(T)=\mu_{k}\mu_{j}\mu_{k}(U)$.

(b) If $j\geq i+2$, then the proof is very similar to (a).

(c) If $j=i-1$, then
$\mu_{i-1}\mu_{i}\mu_{i-1}(U)=\mu_{i-1}\mu_{i}\mu_{i-1}\mu_{[i+1,n]}(T)=\mu_{i-1}\mu_{i}\mu_{[i+1,n]}\mu_{i-1}(T)=\mu_{[i-1,n]}\mu_{i-1}(T)=\mu_i\mu_{[i-1,n]}(T)=\mu_{i}\mu_{i-1}\mu_{i}\mu_{[i+1,n]}(T)=\mu_{i}\mu_{i-1}\mu_{i}(U)$.

(d) If $j=i$ or $j=i+1$, then the proof is very similar to (c).
\end{proof}

Now we are in a position to state one of the main results of this
section.

\begin{theorem}\label{4.13}
Let $\Lambda$ be the Auslander algebra of $K[x]/(x^n)$. Then
\begin{enumerate}[\rm(1)]
\item There exists a bijection $I:\mathfrak{S}_{n+1}\cong\sttilt\Lambda$
which maps $w$ to $I(w)=\mu_{i_1}\mu_{i_2}\cdots\mu_{i_l}(\Lambda)$,
where $w=s_{i_{1}}s_{i_{2}}\cdots s_{i_{l}}$ is an arbitrary (not
necessarily reduced) expression.

\item The statement (1) holds for $\Lambda^{\op}$-modules.
\end{enumerate}
\end{theorem}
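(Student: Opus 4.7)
The plan is to show that $I$ is well-defined on arbitrary expressions, then use a counting/surjectivity argument to conclude bijectivity. The key ingredients are Lemma \ref{4.b}, which gives the Coxeter-type relations for mutation, together with Proposition \ref{3.16} and Theorem \ref{4.14}.

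First, for well-definedness of $I$ on $\mathfrak{S}_{n+1}$, I would argue as follows. Given an expression $w = s_{i_1}\cdots s_{i_l}$ of $w \in \mathfrak{S}_{n+1}$, Proposition \ref{3.16}(1) says it can be transformed into a reduced expression by iteratively applying the three operations (a) remove $s_is_i$, (b) replace $s_is_j$ by $s_js_i$ if $|i-j|\geq 2$, and (c) replace $s_is_js_i$ by $s_js_is_j$ if $|i-j|=1$. The three parts of Lemma \ref{4.b} show exactly that each of these operations preserves $\mu_{i_1}\mu_{i_2}\cdots\mu_{i_l}(\Lambda)$. Hence the value of $\mu_{i_1}\cdots\mu_{i_l}(\Lambda)$ coincides with the value computed from any reduced expression of $w$, and by Proposition \ref{3.16}(2) this common value depends only on $w$. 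Thus $I:\mathfrak{S}_{n+1}\to\sttilt\Lambda$ is well-defined on arbitrary expressions.

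For surjectivity, I would invoke Theorem \ref{4.14}(1), which decomposes $\sttilt\Lambda = \bigsqcup_{i=0}^{n}\mu_{[i+1,n]}(\tilt\Lambda)$. Any $U \in \sttilt\Lambda$ therefore has the form $U = \mu_{[i+1,n]}(T) = \mu_{i+1}\mu_{i+2}\cdots\mu_n(T)$ for some $0 \leq i \leq n$ and some tilting module $T \in \tilt\Lambda$. By Corollary \ref{3.a}, $T = \mu_{j_1}\mu_{j_2}\cdots\mu_{j_k}(\Lambda) = I(v)$ for some $v \in \mathfrak{S}_n$ with reduced expression $v = s_{j_1}\cdots s_{j_k}$. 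Therefore
\[
U \;=\; \mu_{i+1}\mu_{i+2}\cdots\mu_n\mu_{j_1}\cdots\mu_{j_k}(\Lambda) \;=\; I(s_{i+1}s_{i+2}\cdots s_n s_{j_1}\cdots s_{j_k}),
\]
which lies in the image of $I$. By Lemma \ref{4.1}(1) every element of $\mathfrak{S}_{n+1}$ arises this way, so $I$ is surjective.

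Finally, injectivity is by cardinality: Theorem \ref{4.14}(1) gives $\#\sttilt\Lambda = (n+1)! = \#\mathfrak{S}_{n+1}$, so a surjection between finite sets of the same cardinality is a bijection. For part (2), the $\Lambda^{\op}$-version follows from the same argument since Theorem \ref{4.14}(4), Proposition \ref{4.5}, Lemma \ref{4.b} and Corollary \ref{3.a} all have counterparts for $\Lambda^{\op}$-modules.

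The main (but essentially pre-absorbed) obstacle is the well-definedness of $I$ on arbitrary, rather than just reduced, expressions; this is precisely what Lemma \ref{4.b} encodes, so once it is in hand the theorem reduces to matching cardinalities using the classification in Theorem \ref{4.14}.
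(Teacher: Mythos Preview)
Your proposal is correct and follows essentially the same approach as the paper: the paper invokes Lemma \ref{4.b} together with ``the same argument as in the proof of Theorem \ref{3.17}'' for well-definedness (which is precisely the Proposition \ref{3.16} reduction you spell out), then proves surjectivity via Theorem \ref{4.14} and Corollary \ref{3.a}, and concludes by the cardinality count $\#\sttilt\Lambda=(n+1)!=\#\mathfrak{S}_{n+1}$. The only minor remark is that your appeal to Lemma \ref{4.1}(1) in the surjectivity step is not needed---once you exhibit $s_{i+1}\cdots s_n v$ as a preimage of $U$, surjectivity of $I$ is established regardless of whether such products exhaust $\mathfrak{S}_{n+1}$.
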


\begin{proof}
(1) Proposition \ref{4.b} and the same argument as in the proof of
Theorem \ref{3.17} show that the map $I$ is well-defined. By
Theorem \ref{4.14}, we have
$\#\sttilt\Lambda=(n+1)!=\#\mathfrak{S}_{n+1}$. Thus we only have to
show $I$ is surjective.

By Theorem \ref{4.14}, any $U\in\sttilt\Lambda$ is written as
$\mu_{[i+1,n]}(T)$ for some $T\in\tilt\Lambda$ and $0\le i\le n$.
By Corollary \ref{3.a} , there exists $w\in\mathfrak{S}_n$ such that
$T=I(w)$. Then we have $I(s_{i+1}\cdots s_nw)=\mu_{[i+1,n]}(T)=U$.
Thus the assertion follows.

(2) We only need to replace $\Lambda$-modules with
$\Lambda^{\op}$-modules in the proof.
 \end{proof}

Our second goal in this section is to show that the map $I$ in
Theorem \ref{4.13} is an anti-isomorphism of posets. For this aim,
we need the following result.

\begin{proposition}\label{4.15}
 For $w \in \mathfrak{S}_{n+1}$ and $1 \leq j
\leq n$, $l (s_{j}w) > l (w)$ if and only if $I(s_{j}w) < I(w)$.
\end{proposition}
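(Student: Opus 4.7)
The plan is to decompose $w \in \mathfrak{S}_{n+1}$ via Lemma \ref{4.1}(1) and reduce to the corresponding statement for $\tilt\Lambda$, which is already established in Theorem \ref{3.17}(3).

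First, I would write $w = s_{i+1}s_{i+2}\cdots s_n v$ uniquely with $0 \leq i \leq n$ and $v \in \mathfrak{S}_n$ (using the characterization $w(n+1) = i+1$). A direct inversion count shows that this decomposition is length-additive, i.e.\ $l(w) = l(v) + (n-i)$. By the construction in Theorem \ref{4.13} and Corollary \ref{3.a}, this yields $I(w) = \mu_{[i+1,n]}(I(v))$, where $I(v) \in \tilt\Lambda$ is the tilting module attached to $v$ by Theorem \ref{3.17}. Recall also that Theorem \ref{3.17}(3) already guarantees the equivalence $l(s_k v) > l(v) \Leftrightarrow I(v) > I(s_k v)$ for $v \in \mathfrak{S}_n$ and $1 \leq k \leq n-1$.

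Next, I would perform a four-case analysis according to the position of $j$ relative to $i$:
\begin{itemize}
\item If $j \leq i-1$, then by Lemma \ref{4.1}(2)(a) we have $s_j w = s_{i+1}\cdots s_n (s_j v)$, so both lengths drop uniformly by $n-i$ and $l(s_jw)>l(w) \Leftrightarrow l(s_jv)>l(v)$. On the module side, Proposition \ref{4.9}(1) gives $\mu_j(I(w)) = \mu_{[i+1,n]}\mu_j(I(v)) = I(s_jw)$ together with the equivalence $I(w) > \mu_j(I(w)) \Leftrightarrow I(v) > \mu_j(I(v))$, and the result then follows from Theorem \ref{3.17}(3) applied to $v$.
\item If $j \geq i+2$, then by Lemma \ref{4.1}(2)(b) we have $s_j w = s_{i+1}\cdots s_n (s_{j-1} v)$, so $l(s_jw) > l(w) \Leftrightarrow l(s_{j-1}v) > l(v)$, and symmetrically Proposition \ref{4.9}(2) together with Theorem \ref{3.17}(3) closes the case.
\item If $j = i$, then $s_i w = s_i s_{i+1}\cdots s_n v$ is the unique decomposition of $s_iw$ in the sense of Lemma \ref{4.1}(1) (since $s_iw(n+1)=i$), so $l(s_iw) = l(v) + (n-i+1) = l(w)+1 > l(w)$. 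On the module side $I(s_iw) = \mu_{[i,n]}(I(v))$, and Proposition \ref{4.5}(2) immediately gives $\mu_{[i,n]}(I(v)) < \mu_{[i+1,n]}(I(v)) = I(w)$.
\item If $j = i+1$, then $s_{i+1}w = s_{i+2}\cdots s_n v$, and its decomposition in the sense of Lemma \ref{4.1}(1) has index $i+1$, whence $l(s_{i+1}w) = l(v)+(n-i-1) = l(w)-1 < l(w)$. On the module side $I(s_{i+1}w) = \mu_{[i+2,n]}(I(v)) > \mu_{[i+1,n]}(I(v)) = I(w)$, again by Proposition \ref{4.5}(2).
\end{itemize}

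The genuinely computational content is packaged in Proposition \ref{4.9} and Proposition \ref{4.5}(2), so the proof itself is a clean bookkeeping exercise once the decomposition $w = s_{i+1}\cdots s_n v$ is in hand. I expect the main delicacy to lie in the two boundary cases $j \in \{i, i+1\}$, where $s_jw$ changes the "outer layer" $s_{i+1}\cdots s_n$ rather than the inner factor $v$; here one must invoke the monotonicity chain of Proposition \ref{4.5}(2) in both directions and verify that the unique decomposition of $s_jw$ pairs correctly with the stratification $\sttilt\Lambda = \bigsqcup_{i=0}^n \mu_{[i+1,n]}(\tilt\Lambda)$ of Theorem \ref{4.14}(1).
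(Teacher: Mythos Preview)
Your proposal is correct and follows essentially the same route as the paper: decompose $w = s_{i+1}\cdots s_n v$ via Lemma \ref{4.1}, then carry out the identical four-case analysis on $j$ versus $i$, invoking Proposition \ref{4.9}(1)(2) and Theorem \ref{3.17}(3) for $j\le i-1$ and $j\ge i+2$, and Proposition \ref{4.5}(2) for the boundary cases $j\in\{i,i+1\}$. The only cosmetic difference is that the paper first reduces to the implication $l(s_jw)>l(w)\Rightarrow I(s_jw)<I(w)$ (so case $j=i+1$ is simply declared impossible), whereas you treat the full biconditional directly in each case.
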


\begin{proof}
It suffices to show that $l(s_{j}w)>l(w)$ implies that
$I(s_{j}w)<I(w)$ by replacing $s_{j}w$ with $w$ if necessary.  Write
$w = s_{i+1}\cdots s_{n}v$ with $0\le i\le n$ and
$v\in\mathfrak{S}_n$. Then $l(w)=n-i+l(v)$ and $l(s_jw)=n-i+l(v)+1$
hold by our assumption.
 We prove the assertion by comparing $i$ with
$j$.

(a) Assume $j\leq i-1$. By Proposition \ref{4.9}(3), we have
 $I(s_jw)=\mu_j\mu_{[i+1,n]}(I(v))=\mu_{[i+1,n]}\mu_{j}(I(v))=\mu_{[i+1,n]}(I(s_jv))$.
 Since $s_jw=s_{i+1}\cdots s_ns_jv$ holds, we have $n-i+l(v)+1=l(s_jw)\le n-i+l(s_jv)$
 and hence $l(v)+1=l(s_jv)$. Then by Theorem \ref{3.17} one has $I(s_jv)<I(v)$,
 which implies by Proposition \ref{4.9}(1) that $I(s_jw)=\mu_{[i+1,n]}(I(s_jv))<\mu_{[i+1,n]}(I(v))=I(w)$.

 (b) Assume $j\geq i+2$. We have
 $I(s_jw)=\mu_j\mu_{[i+1,n]}(I(v))=\mu_{[i+1,n]}\mu_{j-1}(I(v))=\mu_{[i+1,n]}(I(s_{j-1}v))$ by Proposition \ref{4.9}(3).
 Since $s_jw=s_{i+1}\cdots s_ns_{j-1}v$ holds by Lemma \ref{4.1}(2),
 we have $n-i+l(v)+1=l(s_jw)\le n-i+l(s_{j-1}v)$ and hence $l(v)+1=l(s_{j-1}v)$.
 Then by Theorem \ref{3.17} one has $I(s_{j-1}v)<I(v)$,
 which implies by Proposition \ref{4.9}(2) that $I(s_jw)=\mu_{[i+1,n]}(I(s_{j-1}v))<\mu_{[i+1,n]}(I(v))=I(w)$.

 (c) Assume $j=i$. By Proposition \ref{4.9}(3), we have
 $I(s_jw)=\mu_i\mu_{[i+1,n]}(I(v))=\mu_{[i,n]}(I(v))<\mu_{[i+1,n]}(I(v))=I(w)$ by Proposition \ref{4.5}(2).

(d) The case $j=i+1$ does not occur. In fact $s_jw=s_{i+2}\cdots
s_nv$ implies $l(s_jw)=l(w)-1$, a contradiction.
\end{proof}

Now we are ready to show the main result on the anti-isomorphisms of
posets.

\begin{theorem}\label{4.16}
Let $\Lambda$ and $I$ be as in Theorem \ref{4.13}. Then
$I:\mathfrak{S}_{n+1}\to\sttilt\Lambda$ is an anti-isomorphism of
posets with respect to the left order on $\mathfrak{S}_{n+1}$ and the generation order on $\sttilt\Lambda$, that is, $w_1\leq w_2$ in $\mathfrak{S}_{n+1}$ if and only
if $I(w_1)\geq I(w_2)$ in $\sttilt\Lambda$.
\end{theorem}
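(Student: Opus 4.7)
The plan is to mimic the proof of Theorem \ref{3.17}(3): reduce the claim to an identification of Hasse quivers via Lemma \ref{2.15}, and match the arrows using Proposition \ref{4.15}. So I want to show that, under the bijection $I:\mathfrak{S}_{n+1}\to\sttilt\Lambda$ of Theorem \ref{4.13}, the Hasse quiver ${\rm H}(\sttilt\Lambda)$ coincides with the opposite of the Hasse quiver ${\rm H}(\mathfrak{S}_{n+1})$ of the left order.

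First I would show that for every $w\in\mathfrak{S}_{n+1}$ and every $1\leq j\leq n$, one has $I(s_jw)=\mu_j(I(w))$. This is immediate from Theorem \ref{4.13}(1): given a reduced expression $w=s_{i_1}\cdots s_{i_l}$, the string $s_js_{i_1}\cdots s_{i_l}$ is an (a priori not reduced) expression for $s_jw$, and the formula of Theorem \ref{4.13}(1) applies. Since $I$ is bijective, the $n$ elements $\{I(s_jw)\}_{j=1}^{n}$ are pairwise distinct and distinct from $I(w)$, so each $\mu_j(I(w))$ is a genuine mutation of $I(w)$. Because a basic support $\tau$-tilting $\Lambda$-module admits exactly $|\Lambda|=n$ mutations (the general fact from \cite{AIR}), the set $\{\mu_j(I(w))\}_{j=1}^{n}$ exhausts the mutations of $I(w)$; by Theorem \ref{2.m} these correspond to all arrows of ${\rm H}(\sttilt\Lambda)$ incident to $I(w)$.

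Next I would compare orientations. In ${\rm H}(\mathfrak{S}_{n+1})$ the arrow $w\to s_jw$ occurs iff $l(s_jw)<l(w)$. By Proposition \ref{4.15} this is equivalent to $I(s_jw)>I(w)$, i.e.\ to the existence of the arrow $I(s_jw)\to I(w)$ in ${\rm H}(\sttilt\Lambda)$. Hence, under the identification given by $I$, each arrow of ${\rm H}(\mathfrak{S}_{n+1})$ corresponds to the reversed arrow in ${\rm H}(\sttilt\Lambda)$. Equivalently, the left order on $\mathfrak{S}_{n+1}$ and the $I$-pullback of the opposite of the generation order on $\sttilt\Lambda$ have the same Hasse quiver, so by Lemma \ref{2.15} they coincide. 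This is precisely the statement that $I$ is an anti-isomorphism of posets.

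The only step requiring care is ensuring that the $n$ mutations $\{\mu_j(I(w))\}_{j=1}^{n}$ really are all the arrows of ${\rm H}(\sttilt\Lambda)$ at $I(w)$. This rests on the counting of mutations from \cite{AIR}; once this is in hand, Proposition \ref{4.15} does all the orientation bookkeeping and Lemma \ref{2.15} finishes the argument.
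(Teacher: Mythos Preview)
Your proposal is correct and follows essentially the same approach as the paper. The paper's proof is a one-line reference to the proof of Theorem~\ref{3.17}(3), replacing Proposition~\ref{two actions same}(3) by Proposition~\ref{4.15}; you have simply spelled out the analog of Corollary~\ref{3.12} in the support $\tau$-tilting setting---namely, that the $n$ mutations $\mu_j(I(w))=I(s_jw)$ exhaust the arrows of ${\rm H}(\sttilt\Lambda)$ at $I(w)$---by combining Theorem~\ref{4.13}(1) with the mutation count from \cite{AIR}, and then applied Proposition~\ref{4.15} and Lemma~\ref{2.15} exactly as in the tilting case.
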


\begin{proof}
The proof is very similar to the proof of Theorem  \ref{3.17}(3), we
need to use Proposition \ref{4.15} instead of Proposition
\ref{two actions same}(3).
\end{proof}

To compare with the Hasse quiver of support $\tau$-tilting
$\Lambda$-modules, we give the Hasse quiver of the left order on the
symmetric group $\mathfrak{S}_{n}$ for $n=4$.
\begin{example}\label{4.17}
The Hasse quiver of the left order on
 $\mathfrak{S}_{4}$ is the \emph{opposite} of the following quiver:
$$\begin{xy}
0;<3pt,0pt>:<0pt,2pt>::
(0,0)*+{{\rm id}=\begin{smallmatrix}[1234]\end{smallmatrix}}="1",
(-30,-16)*+{\begin{smallmatrix}[2134]\end{smallmatrix}}="2",
(0,-16)*+{\begin{smallmatrix}[1324]\end{smallmatrix}}="3",
(30,-16)*+{
\begin{smallmatrix}[1243]\end{smallmatrix}}="4",
(-50,-32)*+{
\begin{smallmatrix}[3124]\end{smallmatrix}}="5",
(-25,-32)*+{
\begin{smallmatrix}[2314]\end{smallmatrix}}="6",
(0,-32)*+{
\begin{smallmatrix}[2143]\end{smallmatrix}}="7",
(25,-32)*+{
\begin{smallmatrix}[1423]\end{smallmatrix}}="8",
(50,-32)*+{
\begin{smallmatrix}[1342]\end{smallmatrix}}="9",
(-55,-48)*+{
\begin{smallmatrix}[4123]\end{smallmatrix}}="10",
(-39,-48)*+{
\begin{smallmatrix}[3214]\end{smallmatrix}}="11",
(-13,-48)*+{
\begin{smallmatrix}[2413]\end{smallmatrix}}="12",
 (13,-48)*+{
\begin{smallmatrix}[3142]\end{smallmatrix}}="13",
(39,-48)*+{
\begin{smallmatrix}[1432]\end{smallmatrix}}="14",
(55,-48)*+{
\begin{smallmatrix}[2341]\end{smallmatrix}}="15",
(-50,-64)*+{
\begin{smallmatrix}[4213]\end{smallmatrix}}="16",
(-25,-64)*+{
\begin{smallmatrix}[4132]\end{smallmatrix}}="17",
(0,-64)*+{
\begin{smallmatrix}[3412]\end{smallmatrix}}="18",
(25,-64)*+{
\begin{smallmatrix}[3241]\end{smallmatrix}}="19",
(50,-64)*+{
\begin{smallmatrix}[2431]\end{smallmatrix}}="20",
(-30,-80)*+{
\begin{smallmatrix}[4312]\end{smallmatrix}}="21",
(0,-80)*+{
\begin{smallmatrix}[4231]\end{smallmatrix}}="22",
(30,-80)*+{
\begin{smallmatrix}[3421]\end{smallmatrix}}="23",
(0,-96)*+{
\begin{smallmatrix}[4321]\end{smallmatrix}}="24",
 \ar"1";"2",
\ar"1";"3", \ar"1";"4", \ar"2";"5", \ar"2";"7", \ar"3";"6",
\ar"3";"8", \ar"4";"7", \ar"4";"9", \ar"5";"10", \ar"5";"11",
\ar"6";"11", \ar"6";"12", \ar"7";"13", \ar"8";"12", \ar"8";"14",
\ar"9";"14", \ar"9";"15", \ar"10";"16", \ar"10";"17", \ar"11";"16",
\ar"13";"17", \ar"13";"19", \ar"12";"18", \ar"15";"19",
\ar"14";"20", \ar"15";"20", \ar"16";"21", \ar"17";"22",
\ar"18";"21", \ar"18";"23", \ar"19";"22", \ar"20";"23",
\ar"21";"24", \ar"22";"24", \ar"23";"24",
\end{xy}$$
\end{example}

By Theorem \ref{4.18}, we give the Hasse quiver of support
$\tau$-tilting modules of the Auslander algebra of $K[x]/(x^n)$ for
$n=2,3$.

\begin{example}\label{4.18}
Denote by $\Lambda_i$ the Auslander algebra of $K[x]/(x^i)$ for $i=2,3$. Then
\begin{enumerate}[\rm(1)]
 \item The Hasse quiver ${\rm H}(\sttilt\Lambda_2)$ is of the following form, where $\xrightarrow{i}$ shows $\mu_i$.
$$\begin{xy}
0;<3pt,0pt>:<0pt,2.2pt>::
(0,0)
*+{\left[\begin{smallmatrix} 1\\ &2\\ \end{smallmatrix}\middle|
\begin{smallmatrix} &2\\ 1\\&2\\ \end{smallmatrix}\right]}="1",
(-25,-4)
*+{\left[\begin{smallmatrix} 2\\ \end{smallmatrix}\middle|
\begin{smallmatrix} &2\\ 1\\&2\\ \end{smallmatrix}\right]}="2",
(25,-4)
*+{\left[\begin{smallmatrix}1\\&2\\ \end{smallmatrix}\middle|
\begin{smallmatrix} 1\\ \end{smallmatrix}\right]}="3",
(-25,-20)
*+{\left[\begin{smallmatrix} 2\\ \end{smallmatrix}\middle|\ \right]}="4",
(25,-20)
*+{\left[\ \middle|\begin{smallmatrix} 1\\ \end{smallmatrix}\right]}="5",
(0,-24)
*+{ \left[\ \middle|\
\right]}="6",
\ar"1";"2",|{1} \ar"1";"3",|{2} \ar"2";"4",|{2} \ar"3";"5",|{1} \ar"4";"6",|{1}
\ar"5";"6",|{2}
\end{xy}$$

\item The Hasse quiver ${\rm H}(\sttilt\Lambda_3)$ is of the following form, where $\xrightarrow{i}$ shows $\mu_i$.
$$\begin{xy}
0;<3.4pt,0pt>:<0pt,3.2pt>::
(0,0)*+{ \left[\begin{smallmatrix} 1\\ &2\\
&&3\end{smallmatrix}\middle| \begin{smallmatrix} &2\\ 1&&3\\
&2\\&&3\end{smallmatrix}\middle| \begin{smallmatrix} &&3\\ &2\\
1&&3\\&2\\&&3\end{smallmatrix}\right]}="1", (-30,-16)
*+{\left[\begin{smallmatrix} \\ 2\\
&3\end{smallmatrix}\middle| \begin{smallmatrix} &2\\ 1&&3\\
&2\\&&3\end{smallmatrix}\middle| \begin{smallmatrix} &&3\\ &2\\
1&&3\\&2\\&&3\end{smallmatrix}\right]}="2", (0,-16)
*+{\left[\begin{smallmatrix} 1\\ &2\\
&&3\end{smallmatrix}\middle| \begin{smallmatrix} 1&&3\\
&2\\&&3\end{smallmatrix}\middle| \begin{smallmatrix} &&3\\ &2\\
1&&3\\&2\\&&3\end{smallmatrix}\right]}="3", (30,-16)
*+{\left[\begin{smallmatrix} 1\\ &2\\
&&3\end{smallmatrix}\middle| \begin{smallmatrix} &2\\ 1&&3\\
&2\\&&3\end{smallmatrix}\middle| \begin{smallmatrix} &2\\
1\end{smallmatrix}\right]}="4", (-50,-32)
*+{\left[\begin{smallmatrix} 2\\ &3\end{smallmatrix}\middle|
\begin{smallmatrix} &3\\ 2\\&3\end{smallmatrix}\middle|
\begin{smallmatrix} &&3\\ &2\\1&&3\\&2\\&&3\end{smallmatrix}\right]}="5", (-25,-32)
*+{\left[\begin{smallmatrix} 3\end{smallmatrix}\middle|
\begin{smallmatrix} 1&&3\\ &2\\&&3\end{smallmatrix}\middle|
\begin{smallmatrix} &&3\\ &2\\
1&&3\\&2\\&&3\end{smallmatrix}\right]}="6", (0,-32)
*+{\left[\begin{smallmatrix} \\ 2\\
&3\end{smallmatrix}\middle| \begin{smallmatrix} &2\\ 1&&3\\
&2\\&&3\end{smallmatrix}\middle| \begin{smallmatrix} &2\\
1\end{smallmatrix}\right]}="7", (25,-32)
*+{\left[\begin{smallmatrix} 1\\ &2\\
&&3\end{smallmatrix}\middle| \begin{smallmatrix} 1&&3\\
&2\\&&3\end{smallmatrix}\middle| \begin{smallmatrix}
1\end{smallmatrix}\right]}="8", (50,-32)
*+{\left[\begin{smallmatrix} 1\\ &2\\
&&3\end{smallmatrix}\middle| \begin{smallmatrix} 1\\
&2\end{smallmatrix}\middle| \begin{smallmatrix} &2\\
1\end{smallmatrix}\right]}="9", (-55,-48)
*+{\left[\begin{smallmatrix} 2\\ &3\end{smallmatrix}\middle|
\begin{smallmatrix} &3\\ 2\\&3\end{smallmatrix}\middle|\ \right]}="10",
(-35,-48)
*+{\left[\begin{smallmatrix}
3\end{smallmatrix}\middle| \begin{smallmatrix} &3\\
2\\ &3\end{smallmatrix}\middle| \begin{smallmatrix} &&3\\
&2\\1&&3\\&2\\&&3\end{smallmatrix}\right]}="11", (-10,-48)
*+{\left[\begin{smallmatrix}
3\end{smallmatrix}\middle| \begin{smallmatrix} 1&&3\\
&2\\&&3\end{smallmatrix}\middle| \begin{smallmatrix}
1\end{smallmatrix}\right]}="12", (10,-48)
*+{\left[\begin{smallmatrix} \\ 2\\
&3\end{smallmatrix}\middle| \begin{smallmatrix}
2\end{smallmatrix}\middle| \begin{smallmatrix} &2\\
1\end{smallmatrix}\right]}="13", (35,-48)
*+{\left[\begin{smallmatrix} 1\\ &2\\
&&3\end{smallmatrix}\middle| \begin{smallmatrix} 1\\
&2\end{smallmatrix}\middle| \begin{smallmatrix}
1\end{smallmatrix}\right]}="14", (55,-48)
*+{\left[\ \middle|\begin{smallmatrix} 1\\ &2\end{smallmatrix}\middle|
\begin{smallmatrix} &2\\ 1\end{smallmatrix}\right]}="15",
(-50,-64)
*+{\left[\begin{smallmatrix}
3\end{smallmatrix}\middle| \begin{smallmatrix} &3\\
2\\&3\end{smallmatrix}\middle|\ \right]}="16", (-25,-64)
*+{\left[\begin{smallmatrix} 2\\
&3\end{smallmatrix}\middle| \begin{smallmatrix}
2\end{smallmatrix}\middle|\ \right]}="17", (-0,-64)
*+{\left[\begin{smallmatrix} 3\end{smallmatrix}\middle|\ \middle|
\begin{smallmatrix} 1\end{smallmatrix}\right]}="18", (25,-64)
*+{\left[\ \middle|\begin{smallmatrix}
2\end{smallmatrix}\middle| \begin{smallmatrix} &2\\
1\end{smallmatrix}\right]}="19", (50,-64)
*+{\left[\ \middle|\begin{smallmatrix} 1\\
&2\end{smallmatrix}\middle| \begin{smallmatrix}
1\end{smallmatrix}\right]}="20", (-30,-80)
*+{\left[\begin{smallmatrix}
3\end{smallmatrix}\middle|\ \middle|\ \right]}="21", (0,-80)
*+{\left[\ \middle|\begin{smallmatrix}
2\end{smallmatrix}\middle|\ \right]}="22", (30,-80)
*+{\left[\ \middle|\ \middle|\begin{smallmatrix}
1\end{smallmatrix}\right]}="23", (0,-96)
*+{\left[\ \middle|\ \middle|\ \right]}="24", \ar"1";"2",|{1}
\ar"1";"3",|{2} \ar"1";"4",|{3} \ar"2";"5",|{2} \ar"2";"7",|{3} \ar"3";"6",|{1}
\ar"3";"8",|{3} \ar"4";"7",|{1} \ar"4";"9",|{2} \ar"5";"10",|{3} \ar"5";"11",|{1}
\ar"6";"11",|{2} \ar"6";"12",|{3} \ar"7";"13",|{2} \ar"8";"12",|{1} \ar"8";"14",|{2}
\ar"9";"14",|{3} \ar"9";"15",|{1} \ar"10";"16",|{1} \ar"10";"17",|{2} \ar"11";"16",|{3}
\ar"12";"18",|{2} \ar"13";"17",|{3} \ar"13";"19",|{1} \ar"14";"20",|{1}
\ar"15";"19",|{2} \ar"15";"20",|{3} \ar"16";"21",|{2} \ar"17";"22",|{1}
\ar"18";"21",|{3} \ar"18";"23",|{1} \ar"19";"22",|{3} \ar"20";"23",|{2}
\ar"21";"24",|{1} \ar"22";"24",|{2} \ar"23";"24",|{3} \end{xy}$$
\end{enumerate}
\end{example}

\section{Connection with preprojective algebras of type $A_{n}$}

Let $\Lambda$ be the Auslander algebra of $K[x]/(x^n)$ and $\Gamma$ be
the preprojective algebra of Dynkin type $A_n$. Thus $\Gamma$ is
presented by the quiver
\[\xymatrix{
1\ar@<2pt>[r]^{a_1}&2\ar@<2pt>[r]^{a_2}\ar@<2pt>[l]^{b_2}&3\ar@<2pt>[r]^{a_3}\ar@<2pt>[l]^{b_3}&\cdots\ar@<2pt>[r]^{a_{n-2}}\ar@<2pt>[l]^{b_4}&n-1\ar@<2pt>[r]^{a_{n-1}}\ar@<2pt>[l]^{b_{n-1}}&n\ar@<2pt>[l]^{b_n}
}\] with relations $a_{1} b_{2}= 0$, $b_na_{n-1}=0$ and $a_{i} b_{i+1} =b_{i}
a_{i-1}$ for any $2 \leq i \leq n-1$. Thus we
have $\Gamma=\Lambda/L$ for the ideal $L$ of $\Lambda$ generated by
$b_na_{n-1}$. Then we have $L=\bigoplus_{i=1}^nL_i$ for $L_i:=e_iL$.
For example, $\Lambda$ and $L$ in the case $n=4$ is the following.
\[L=\left[\begin{smallmatrix} \ \\ &\ \\
&&\ \\&&&\ \\ \end{smallmatrix}\middle| \begin{smallmatrix} &\ \\ \ &&\ \\
&\ &&\ \\&&\ \\&&&4\end{smallmatrix}\middle| \begin{smallmatrix} &&\ \\ &\ &&\ \\
\ &&\ \\&\ &&4\\&&3\\&&&4\\ \end{smallmatrix}\middle| \begin{smallmatrix}&&&\  \\&&\ \\ &\ &&4\\
\ &&3\\&2&&4\\&&3\\&&&4\\ \end{smallmatrix}\right] \subseteq
\Lambda=\left[\begin{smallmatrix} 1\\ &2\\
&&3\\&&&4\\ \end{smallmatrix}\middle| \begin{smallmatrix} &2\\ 1&&3\\
&2&&4\\&&3\\&&&4\end{smallmatrix}\middle| \begin{smallmatrix} &&3\\ &2&&4\\
1&&3\\&2&&4\\&&3\\&&&4\\ \end{smallmatrix}\middle| \begin{smallmatrix}&&&4 \\&&3\\ &2&&4\\
1&&3\\&2&&4\\&&3\\&&&4\\ \end{smallmatrix}\right]
\]
Our aim in this section is to apply Theorems \ref{4.13} and
\ref{4.16} to $\Gamma$ and prove that the tensor functor
\[-\otimes_{\Lambda}\Gamma:\mod \Lambda\rightarrow\mod\Gamma\]
induces a bijection from $\sttilt\Lambda$ to $\sttilt\Gamma$. In
particular, we can get Mizuno's bijection from the symmetric group
$\mathfrak{S}_{n+1}$ to $\sttilt\Gamma$.

Let us start with the following general properties of support
$\tau$-tilting modules over an algebra $A$ and its factor algebra $B$.

\begin{proposition}{\cite{DIRRT}}\label{5.1}
Let $A$ be an algebra and let $B$ be a factor algebra of $A$.
\begin{enumerate}[\rm(1)]
\item If $T$ is a $\tau$-rigid $A$-module, then $T\otimes_{A}B$ is a $\tau$-rigid $B$-module.
\item If $T$ is a support $\tau$-tilting $A$-module, then $T\otimes_{A}B$ is a support $\tau$-tilting
$B$-module. Thus we have a map $-\otimes_AB:\sttilt A\to\sttilt B$,
which preserves the generation order.
\item The map in $(2)$ is surjective if $A$ is $\tau$-rigid finite.
\end{enumerate}
\end{proposition}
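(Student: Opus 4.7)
The plan is to deduce (1) and (2) from the projective-presentation characterization of $\tau$-rigidity \cite{AIR}: for a projective presentation $P_1\xrightarrow{f}P_0\to M\to 0$ in $\mod A$, the module $M$ is $\tau$-rigid if and only if $\Hom_A(f,M)$ is surjective. Part (3) will be handled by a mutation-compatibility argument combined with Proposition \ref{2.22}.

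For (1), I would apply the right exact functor $-\otimes_A B$ to obtain a projective $B$-presentation $P_1\otimes_A B\xrightarrow{f\otimes_A B}P_0\otimes_A B\to M\otimes_A B\to 0$. Using the tensor-hom adjunction $\Hom_B(P\otimes_A B,N)\cong\Hom_A(P,N)$ for $P$ a projective $A$-module and $N$ a $B$-module, together with the canonical surjection $M\twoheadrightarrow M\otimes_A B$ and the projectivity of $P_0$ and $P_1$, I would transport the surjectivity of $\Hom_A(f,M)$ to surjectivity of $\Hom_B(f\otimes_A B,M\otimes_A B)$, establishing $\tau$-rigidity of $M\otimes_A B$.

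For (2), I would combine (1) with a summand count. Writing $M$ as a $\tau$-tilting $A/(e)$-module for some idempotent $e$, the tensor product $M\otimes_A B$ coincides with $M\otimes_{A/(e)}(B/(\overline{e}))$, and each indecomposable summand of $M$ either vanishes after tensoring or remains indecomposable (detected by its top), with different non-vanishing summands having non-isomorphic images. This together with (1) gives that $M\otimes_A B$ is support $\tau$-tilting over $B$. For the order-preservation: if $\Fac_A N\subseteq\Fac_A M$ and $X\in\Fac_B(N\otimes_A B)$, then viewing $X$ as an $A$-module places it in $\Fac_A N\subseteq\Fac_A M$; any $A$-epimorphism $M^k\twoheadrightarrow X$ factors through $(M\otimes_A B)^k\twoheadrightarrow X$ because $X$ is a $B$-module, so $X\in\Fac_B(M\otimes_A B)$.

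For (3), the strategy is to show that the image $\mathcal{I}$ of the map $-\otimes_A B\colon\sttilt A\to\sttilt B$ is closed under mutation in ${\rm H}(\sttilt B)$. Together with $B=A\otimes_A B\in\mathcal{I}$ and the finiteness of $\mathcal{I}$ (inherited from $\tau$-rigid finiteness of $A$), Proposition \ref{2.22} will then force $\mathcal{I}=\sttilt B$. The mutation-compatibility I need to establish is: for each mutation $\mu_i(M)$ of $M\in\sttilt A$, the image $\mu_i(M)\otimes_A B$ is either equal to $M\otimes_A B$ or is itself a mutation of $M\otimes_A B$ in $\sttilt B$. This is the main obstacle, and will be proved by tensoring the exchange sequence of Theorem \ref{2.20} by $B$ and analyzing, case by case of Definition \ref{mutation}, whether the resulting complement collapses onto an existing summand or yields a genuinely new indecomposable.
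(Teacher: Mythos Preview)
The paper does not prove Proposition \ref{5.1}; it is stated with a reference to \cite{IRRT} and used as a black box. So there is no proof in the paper to compare against, and your task is really to produce a correct independent argument.

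Your argument for (1) via the projective-presentation criterion is standard and correct.

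Your argument for (2), however, contains a genuine error. You assert that ``different non-vanishing summands [of $M$] have non-isomorphic images'' under $-\otimes_A B$, detected by their tops. This is false in general: two non-isomorphic indecomposable summands of a basic $\tau$-rigid $A$-module can have the same simple top, and can become isomorphic after tensoring with $B$. The paper itself flags exactly this phenomenon in the sentence immediately following the proposition: ``Note that $M\otimes_A B$ is not necessarily basic even if $M$ is basic $\tau$-rigid.'' So your summand count collapses, and you have not shown that $M\otimes_A B$ has enough non-isomorphic indecomposable summands to be support $\tau$-tilting. A correct proof of (2) must proceed differently --- for instance via the two-term silting viewpoint (tensoring the silting complex associated to $(M,P)$ and checking it still generates $\mathsf{K}^{\mathrm{b}}(\mathrm{proj}\,B)$), or via torsion classes (showing $\Fac_B(M\otimes_A B)=\Fac_A M\cap\mod B$ and that this is functorially finite).

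Your strategy for (3) is reasonable in outline, but the mutation-compatibility step inherits the same difficulty: because $-\otimes_A B$ can merge summands, the correspondence between indecomposable summands of $M$ and of $M\otimes_A B$ is not one-to-one, so ``the $i$-th mutation of $M$'' need not line up with ``the $i$-th mutation of $M\otimes_A B$'' in the way your case analysis presumes. You would need to argue more carefully that the image is nonetheless mutation-closed.
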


Note that $T\otimes_AB$ is not necessarily basic even if $T$ is basic $\tau$-rigid.

Recall that $M$ and $L$ are the ideals defined at the beginning of Sections 4 and 5 respectively, and $M_i=e_iM$ and $L_i=e_iL$ for $1\le i\le n$.
We need the following facts.

\begin{lemma}\label{4.2}
Let $T\in\langle I_1,\ldots,I_{n-1}\rangle$ and $T_i:=e_iT$ for
$1\le i\le n$. For any $1\le i\le n$, we have
\begin{enumerate}[\rm(1)]
\item $LM=L=ML$ and $T_iL=L_i$.
\item $T_i/L_{i}$ is indecomposable with a simple socle $S_{n-i+1}$.
\item Let $V\in\langle I_1,\ldots,I_{n-1}\rangle$.
If $T_i/L_i\cong V_i/L_i$, then $T_i\cong V_i$.
\end{enumerate}
\end{lemma}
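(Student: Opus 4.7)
The plan is to treat the three parts separately. For part (1), the key observation is that the generator $b_n a_{n-1}$ of $L$ lies in $e_n \Lambda e_n\subset M$, so $b_na_{n-1}=e_n(b_na_{n-1})=(b_na_{n-1})e_n\in ML\cap LM$; since $L$ is the two-sided ideal generated by $b_na_{n-1}$, this forces $L\subset ML\cap LM$, while the reverse inclusions are obvious. For the identity $T_iL=L_i$, first show $T\supset M$ for every $T\in\langle I_1,\ldots,I_{n-1}\rangle$: each $I_j$ with $j\le n-1$ contains $e_n$ and hence $I_j\supset M$, and $M^2=M$ then yields the claim inductively on the length of the expression. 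Combined with $ML=L$ this gives $TL=L$, whence $T_iL=e_iTL=e_iL=L_i$.

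For part (2), the plan is to embed $T_i/L_i$ into the indecomposable projective $\Gamma$-module $P_i^\Gamma:=P_i/L_i$ via the restriction of the projection $P_i\twoheadrightarrow P_i/L_i$ to the submodule $T_i$; this restriction is injective since $L_i=T_i\cap L$ by part (1). The decisive input is that $\Gamma$, being the preprojective algebra of Dynkin type $A_n$, is Frobenius with Nakayama permutation $i\mapsto n-i+1$, so $\soc P_i^\Gamma=S_{n-i+1}$ is simple. Any nonzero submodule of $P_i^\Gamma$ then inherits this simple socle and is therefore indecomposable. Nonvanishing of $T_i/L_i$ is immediate from $T_i\supset M_i$ together with the observation that the image of the straight path $a_ia_{i+1}\cdots a_{n-1}$ in $\Gamma$ is a nonzero element of $M_i/L_i$ (the relation $b_na_{n-1}=0$ cannot reduce this path).

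Part (3) mirrors Lemma \ref{4.3}(4) formally. In the pullback square
\[\xymatrix{T_i\ar[r]\ar[d]&T_i/L_i\ar[d]\\ P_i\ar[r]&P_i/L_i,}\]
the bottom horizontal arrow is a projective cover (because $L_i\subset\rad P_i$), and the right vertical arrow is an injective envelope in $\mod\Gamma$ (because $\Gamma$ is self-injective and $T_i/L_i$ shares its simple socle $S_{n-i+1}$ with $P_i^\Gamma$). Hence $T_i$ is recovered as the pullback of $T_i/L_i$ along $P_i\to P_i/L_i$. The main obstacle throughout is the socle identification $\soc P_i^\Gamma=S_{n-i+1}$, which depends on the Frobenius structure of the preprojective algebra of type $A_n$ and is to be invoked as a standard fact rather than reproved.
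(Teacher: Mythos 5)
Your proposal is correct and takes essentially the same route as the paper's (much terser) proof: the paper gets $T_iL=L_i$ from the same sandwich $L_i=M_iL\subset T_iL\subset P_iL=L_i$ (your reduction via $M\subset T$ and $ML=L$ is the identical idea), reads off the simple socle $S_{n-i+1}$ of $T_i/L_i\subseteq P_i/L_i$ (which your appeal to the self-injectivity of $\Gamma$ with Nakayama permutation $i\mapsto n-i+1$ simply makes explicit), and proves (3) by exactly your pullback of $T_i/L_i$ along the projective cover $P_i\to P_i/L_i$ against the injective envelope in $\mod\Gamma$, this being what the paper means by ``a similar method with Lemma \ref{4.3}(4)''. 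Your added details (the generator $b_na_{n-1}\in e_n\Lambda e_n\subset M$ for part (1), and the nonvanishing of $T_i/L_i$ via the straight path $a_i\cdots a_{n-1}$) are correct and fill in steps the paper leaves implicit.
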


\begin{proof}

(1) This is clear. (2) Since $M_i\subseteq T_i\subseteq P_i$, we have
$L_i=M_iL\subseteq T_iL\subseteq P_iL=L_i$. The socle of
$T_i/L_i\subseteq P_i/L_i$ is $S_{n-i+1}$. (3) One can prove in a
similar method with Lemma \ref{4.3}(4).
\end{proof}

Now we can state our main result of this section.

\begin{theorem}\label{5.4}
Let $\Lambda$ be the Auslander algebra of $K[x]/(x^n)$ and $\Gamma$
the preprojective algebra of Dynkin type $A_n$.
\begin{enumerate}[\rm(1)]
\item The map $-\otimes_{\Lambda}\Gamma:\sttilt\Lambda\rightarrow\sttilt\Gamma$ given by $U\mapsto
U\otimes_{\Lambda}\Gamma$ is bijective.
\item The map in $(1)$ is an isomorphism of posets.
\item If $X$ is an indecomposable $\tau$-rigid $\Lambda$-module, then $X\otimes_\Lambda\Gamma$ is an indecomposable $\Gamma$-module.
\end{enumerate}
\end{theorem}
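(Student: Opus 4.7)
The plan is to establish (3) first, and then use it to prove (1) and (2).

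For (3), I would invoke the classification in Theorem \ref{4.14}(3): every indecomposable $\tau$-rigid $\Lambda$-module is either of the form $T_i=e_iT$ or $\overline{T}_i$ for some $T\in\langle I_1,\ldots,I_{n-1}\rangle$. In the first case, by Lemma \ref{4.2}(1) we have $T_i\otimes_\Lambda\Gamma=T_i/T_iL=T_i/L_i$, which is indecomposable with simple socle $S_{n-i+1}$ by Lemma \ref{4.2}(2). In the second case, since $b_na_{n-1}$ factors through vertex $n$ one has $L\subseteq M$, so $\overline{T}_i$ is annihilated by $L$ and $\overline{T}_i\otimes_\Lambda\Gamma=\overline{T}_i$, which is indecomposable by Lemma \ref{4.3}(2).

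For (1), well-definedness and order-preservation come from Proposition \ref{5.1}(2), while surjectivity follows from Proposition \ref{5.1}(3) because $\Lambda$ is $\tau$-rigid finite (Theorem \ref{4.14}). The main work is injectivity. Given $U,U'\in\sttilt\Lambda$ with $U\otimes_\Lambda\Gamma\cong U'\otimes_\Lambda\Gamma$, I would invoke Theorem \ref{4.14}(2) to write $U=\bigoplus_{j=1}^iT_j\oplus\bigoplus_{j=i}^{n-1}\overline{T}_j$ and similarly $U'$ with data $(T',i')$. The summands of $U\otimes_\Lambda\Gamma$ are the $T_j/L_j$ (socle $S_{n-j+1}$) together with the $\overline{T}_j$ (socle $S_{n-j}$); the two types are distinguished inside $\mod\Gamma$ by whether $S_n$ appears as a composition factor, since $T_j/L_j\supseteq M_j/L_j$ retains the socle factor $S_n$ of $T_j$ while $\overline{T}_j$ is annihilated by $M$. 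This lets me read off $i$ (as the number of summands having $S_n$ as composition factor) and then match summands one by one; applying the reconstruction Lemmas \ref{4.2}(3) and \ref{4.3}(4) I obtain $T_j\cong T'_j$ for every $j$, hence $T\cong T'$, hence $T=T'$ by Theorem \ref{3.10}(2), and finally $U=U'$.

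For (2), Proposition \ref{5.1}(2) supplies one direction. For the reverse I would compare Hasse quivers. Suppose $V\otimes_\Lambda\Gamma$ is a left mutation of $U\otimes_\Lambda\Gamma$ in $\sttilt\Gamma$; then they differ in exactly one indecomposable summand, and combined with the bijection between indecomposable $\tau$-rigid modules provided by (3), so do $V$ and $U$ in $\sttilt\Lambda$. By the general mutation theory in Theorem \ref{2.m} the elements $V$ and $U$ are then comparable; if $V>U$ held, order-preservation would force $V\otimes_\Lambda\Gamma>U\otimes_\Lambda\Gamma$, contradicting the hypothesis. Hence $V<U$, so $V$ is a left mutation of $U$. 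Thus the bijection identifies the Hasse quivers of $\sttilt\Lambda$ and $\sttilt\Gamma$ arrow-by-arrow, and Lemma \ref{2.15} concludes that the two posets are isomorphic. The hard part is the injectivity step in (1), and in particular verifying that $T_j/L_j$ genuinely has $S_n$ as a composition factor (equivalently, that $M_j\supsetneq L_j$ in the relevant range) so that the two types of summand can be separated inside $\mod\Gamma$; this requires a careful inspection of the inclusion $L\subseteq M$ of ideals of $\Lambda$.
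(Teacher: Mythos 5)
Your proposal follows essentially the same route as the paper's proof: part (3) via the classification in Theorem \ref{4.14}(3) together with Lemmas \ref{4.2} and \ref{4.3}; injectivity in (1) by recovering $i$ as the number of summands with composition factor $S_n$ and reconstructing $T$ through Lemmas \ref{4.2}(3) and \ref{4.3}(4), with surjectivity from Proposition \ref{5.1}(3) and $\tau$-rigid finiteness; and (2) by showing the bijection identifies mutations, hence Hasse quivers, and invoking Lemma \ref{2.15}. The one caveat, which you flag yourself, is that ``retains the socle factor $S_n$'' is not literally correct, since whenever $L_j\neq 0$ the socle copy of $S_n$ in $T_j$ equals $\soc P_j\subseteq L_j$ and is killed; the correct justification is that $(M_j/L_j)e_n=e_j\Gamma e_n\Gamma e_n\supseteq e_j\Gamma e_n\neq 0$, a point the paper likewise leaves implicit.
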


\begin{proof}
(1) For any $U\in\sttilt\Lambda$, there exists $T\in\langle
I_1,\ldots,I_{n-1}\rangle$ and $0\le i\le n$ such that
\[U=\mu_{[i+1,n]}(T)=T_1\oplus\cdots\oplus T_i\oplus\overline{T_i}\oplus\cdots\oplus\overline{T_{n-1}}\]
by Theorem \ref{4.14}. In this case, we have
\[U\otimes_\Lambda\Gamma=\left\{\begin{array}{cc}
(T_1/L_1)\oplus\cdots\oplus(T_i/L_i)\oplus\overline{T_i}\oplus\cdots\oplus\overline{T_{n-1}}&\mbox{if }\ i\ge1,\\
0\oplus\overline{T_1}\oplus\cdots\oplus\overline{T_{n-1}}&\mbox{if }\ i=0.\\
\end{array}\right.\]
For any $1\le j\le n$, $\overline{T_j}$ does not have $S_n$ as a
composition factor, and $T_j/L_{j}$ has $S_n$ as a composition
factor. Therefore the integer $i$ can be recovered from $U$ as the
number of indecomposable direct summands of $U$ which have $S_n$ as
a composition factor. Moreover, by Lemmas \ref{4.2}(2) and
\ref{4.3}(2), the socle of the $j$-th direct summand of
$U\otimes_\Lambda\Gamma$ is $S_{n-j+1}$ if $1\leq j\leq i$, and
either $0$ or $S_{n-j+1}$ if $i+1\leq j\leq n$.

Now assume that another $U'\in\sttilt\Lambda$ satisfies
$U\otimes_\Lambda\Gamma\cong U'\otimes_\Lambda\Gamma$, and take
$T'\in\langle I_1,\ldots,I_{n-1}\rangle$ and $1\le i'\le n$ such
that $U'=\mu_{[i'+1,n]}(T')$. By the argument above, we have $i=i'$.
By looking at the socle of each indecomposable direct summand, we
have $T_j/L_{j}\cong T'_j/L_j$ for any $1\le j\le i$ and
$\overline{T_j}\cong\overline{T'_j}$ for any $i\le j\le n-1$. They
imply $T_j\cong T'_j$ for any $1\le j\le n-1$ by Lemmas \ref{4.2}(3)
and \ref{4.3}(4). Since $T_n=P_n=T'_n$, we have $T\cong T'$ and
hence $U=\mu_{[i+1,n]}(T)\cong\mu_{[i+1,n]}(T')=U'$.

(3) By Theorem \ref{4.14}(3), $X$ has a form $T_i$ or
$\overline{T_i}$ for some $T\in\langle I_1,\ldots,I_{n-1}\rangle$
and $1\le i\le n$. Since $T_i\otimes_\Lambda\Gamma=T_i/L_i$ and
$\overline{T_i}\otimes_\Lambda\Gamma=\overline{T_i}$ are
indecomposable by Lemmas \ref{4.2}(2) and \ref{4.3}(2), the
assertion follows.

(2) The map $-\otimes_\Lambda\Gamma$ preserves mutations. In
fact, if $U=\mu_i(T)$ for $T,U\in\sttilt\Lambda$, then
$U\otimes_\Lambda\Gamma$ and $T\otimes_\Lambda\Gamma$ have the same
indecomposable direct summands except the $i$-th summand by (3) and
the injectivity of
$-\otimes_\Lambda\Gamma:\sttilt\Lambda\to\sttilt\Gamma$. Therefore
we have $U\otimes_\Lambda\Gamma=\mu_i(T\otimes_\Lambda\Gamma)$.

In particular, $-\otimes_\Lambda\Gamma$ gives an isomorphism ${\rm
H}(\sttilt\Lambda)\to{\rm H}(\sttilt\Gamma)$ of Hasse quivers
by Theorem \ref{2.m}.
Thus $-\otimes_\Lambda\Gamma:\sttilt\Lambda\to\sttilt\Gamma$ is an
isomorphism of posets by Lemma \ref{2.15}.
\end{proof}

\begin{remark}
Theorem \ref{5.4} gives another proof of Mizuno's result
\cite[Theorem 2.21]{M}.

On the other hand, we can give another shorter proof by using
Mizuno's result \cite[Theorem 2.21]{M}. By Proposition \ref{5.1}(3),
we have a surjective map
$-\otimes_\Lambda\Gamma:\sttilt\Lambda\to\sttilt\Gamma$. This must
be injective since we know $\#\sttilt\Lambda=(n+1)!=\#\sttilt\Gamma$
by Theorem \ref{4.13} and Mizuno's result.
\end{remark}

As a corollary, we get the following.

\begin{corollary}\label{5.6}
Let $\Lambda$ be the Auslander algebra of $K[x]/(x^n)$ and $\Gamma$
a preprojective algebra of Dynkin type $A_n$. There are isomorphisms
between the following posets:
\begin{enumerate}[\rm(1)]
\item The poset $\sttilt\Lambda$ with the generation order.
\item The poset $\sttilt\Gamma$ with the generation order.
\item The symmetric group $\mathfrak{S}_{n+1}$ with the opposite of the left order.
\item The poset $\sttilt(\Lambda^{\op})$ with the opposite of the generation order.
\item The poset $\sttilt(\Gamma^{\op})$ with the opposite of the generation order.
\item The symmetric group $\mathfrak{S}_{n+1}$ with the right order.
\end{enumerate}
\end{corollary}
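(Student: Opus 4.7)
The plan is to chain together isomorphisms extracted from Theorems \ref{4.16} and \ref{5.4} together with two classical identities for the weak orders on the symmetric group.

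First, for the $\Lambda$-side, I would use Theorem \ref{5.4}(2) to get (1) $\cong$ (2) directly, and Theorem \ref{4.16}, which states that $I:\mathfrak{S}_{n+1}\to\sttilt\Lambda$ is an anti-isomorphism of posets; rewriting this as an isomorphism onto the opposite poset gives (3)$^{\op}$ $\cong$ (1). Thus (1), (2), and (3)$^{\op}$ are mutually isomorphic. By the same arguments on the opposite side (Theorem \ref{4.13}(2) plus the $\Lambda^{\op}$-analogue of Theorem \ref{4.16}, whose proof is identical by left/right symmetry, together with Theorem \ref{5.4} applied with $\Lambda$, $\Gamma$ replaced by $\Lambda^{\op}$, $\Gamma^{\op}$), I would obtain (3) $\cong$ (4) $\cong$ (5).

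To merge the two halves I would invoke the standard anti-automorphism $w\mapsto w_0w$ of the left weak order on $\mathfrak{S}_{n+1}$, where $w_0$ is the longest element: a short length computation, using $l(w_0w)=l(w_0)-l(w)$ and the length-preserving property of conjugation by $w_0$, shows that this map reverses the order, giving (3) $\cong$ (3)$^{\op}$. Finally, the identity $l(w)=l(w^{-1})$ shows that inversion $w\mapsto w^{-1}$ is an order-isomorphism between the left and right weak orders on $\mathfrak{S}_{n+1}$; taking opposites yields (3)$^{\op}$ $\cong$ (6).

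I do not anticipate a serious obstacle. The main bookkeeping is to match the generation orders (and their opposites) on the $\Lambda$- and $\Lambda^{\op}$-sides and to verify that Theorems \ref{4.16} and \ref{5.4} transfer to the opposite algebras, both of which are straightforward by the symmetry of the relevant statements and proofs.
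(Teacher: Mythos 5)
Your argument is correct, and it reaches the conclusion by a partly different and in places more careful route than the paper's own three-line proof. You agree with the paper on the two main pillars: (1) $\cong$ (2) via $-\otimes_\Lambda\Gamma$ (Theorem \ref{5.4}(2)), and the use of the anti-isomorphism $I$ of Theorem \ref{4.16} to relate (3) and (1). For (4) and (5), however, the paper does not re-run anything on the opposite side: it simply cites the duality $(-)^{\dagger}\colon\sttilt A\to\sttilt(A^{\op})$ of Adachi--Iyama--Reiten, which reverses the generation order and therefore gives order isomorphisms (1) $\cong$ (4) and (2) $\cong$ (5) in one stroke, whereas you invoke $\Lambda^{\op}$-analogues of Theorems \ref{4.13} and \ref{4.16} together with Theorem \ref{5.4} for $\Lambda^{\op},\Gamma^{\op}$. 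Your route is viable --- the paper records the op-versions of most ingredients (Theorems \ref{3.10}(4), \ref{4.14}(4), \ref{4.13}(2)), and here $\Lambda^{\op}\cong\Lambda$ and $\Gamma^{\op}\cong\Gamma$ --- but note that the op-analogue of Theorem \ref{4.16} is never actually stated in the paper, so you should either verify it (routine, as you say) or fall back on the AIR duality as the paper does. Conversely, your explicit combinatorial steps are a genuine improvement on the published proof: since Theorem \ref{4.16} gives an \emph{anti}-isomorphism, the literal claim that (3) is \emph{isomorphic} to (1) requires exactly your twist $w\mapsto w_0w$ (with $l(w_0w)=l(w_0)-l(w)$ and length-preserving conjugation by $w_0$) showing the left weak order is self-dual, and poset (6) requires your inversion argument $w\mapsto w^{-1}$ identifying left and right orders; the paper's proof addresses neither point, asserting only that ``the isomorphism from (3) to (1) is given by $I$'' and leaving (6) entirely implicit. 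In short: the paper's proof is shorter because it outsources (4) and (5) to a citation, while yours is more self-contained and actually supplies the symmetric-group identifications that the paper's statement needs but its proof omits.
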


\begin{proof}
The isomorphism from (1) to (2) given by $-\otimes_{\Lambda}\Gamma$
is showed in Theorem \ref{5.4}. The isomorphism from (3) to (1)
given by $I$ is showed in Theorems \ref{4.13} and \ref{4.16}. The
isomorphism between (1) and (4) (resp. (2) and (5)) is given in
\cite{AIR}.
\end{proof}

\begin{example} Denote by $\Gamma_n$ the preprojective algebra of type $A_n$. Then
\begin{enumerate}[\rm(1)]
 \item The Hasse quiver ${\rm H}(\sttilt\Gamma_2)$ is of the following form,
 where $\xrightarrow{i}$ shows $\mu_i$.
$$\begin{xy}
0;<3pt,0pt>:<0pt,2pt>::
(0,0)
*+{\left[\begin{smallmatrix} 1\\ &2\\ \end{smallmatrix}\middle|
\begin{smallmatrix} &2\\ 1\\ \end{smallmatrix}\right]}="1",
(-25,-4)
*+{\left[\begin{smallmatrix} 2\\ \end{smallmatrix}\middle|
\begin{smallmatrix} &2\\ 1\\ \end{smallmatrix}\right]}="2",
(25,-4)
*+{\left[\begin{smallmatrix}1\\&2\\ \end{smallmatrix}\middle|
\begin{smallmatrix} 1\\ \end{smallmatrix}\right]}="3",
(-25,-20)
*+{\left[\begin{smallmatrix} 2\\ \end{smallmatrix}\middle|\ \right]}="4",
(25,-20)
*+{\left[\ \middle|\begin{smallmatrix} 1\\ \end{smallmatrix}\right]}="5",
(0,-24)
*+{ \left[\ \middle|\ \right]}="6",
\ar"1";"2",|{1} \ar"1";"3",|{2} \ar"2";"4",|{2} \ar"3";"5",|{1} \ar"4";"6",|{1}
\ar"5";"6",|{2}
\end{xy}$$

\item The Hasse
quiver ${\rm H}(\sttilt\Gamma_3)$ is of the following form,
where $\xrightarrow{i}$ shows $\mu_i$.
$$\begin{xy}
0;<3.4pt,0pt>:<0pt,2.5pt>::
(0,0)
*+{ \left[\begin{smallmatrix} 1\\ &2\\
&&3\end{smallmatrix}\middle| \begin{smallmatrix} &2\\ 1&&3\\
&2\\ \end{smallmatrix}\middle| \begin{smallmatrix} &&3\\ &2\\
1\\ \end{smallmatrix}\right]}="1", (-30,-16)
*+{\left[\begin{smallmatrix} \\ 2\\
&3\end{smallmatrix}\middle| \begin{smallmatrix} &2\\ 1&&3\\
&2\\ \end{smallmatrix}\middle| \begin{smallmatrix} &&3\\ &2\\
1\\ \end{smallmatrix}\right]}="2", (0,-16)
*+{\left[\begin{smallmatrix} 1\\ &2\\
&&3\end{smallmatrix}\middle| \begin{smallmatrix} 1&&3\\
&2\\ \end{smallmatrix}\middle| \begin{smallmatrix} &&3\\ &2\\
1\\ \end{smallmatrix}\right]}="3", (30,-16)
*+{\left[\begin{smallmatrix} 1\\ &2\\
&&3\end{smallmatrix}\middle| \begin{smallmatrix} &2\\ 1&&3\\
&2\\ \end{smallmatrix}\middle| \begin{smallmatrix} &2\\
1\end{smallmatrix}\right]}="4", (-50,-32)
*+{\left[\begin{smallmatrix} 2\\ &3\end{smallmatrix}\middle|
\begin{smallmatrix} &3\\ 2\\ \end{smallmatrix}\middle|
\begin{smallmatrix} &&3\\ &2\\1\\ \end{smallmatrix}\right]}="5", (-25,-32)
*+{\left[\begin{smallmatrix} 3\end{smallmatrix}\middle|
\begin{smallmatrix} 1&&3\\ &2\\ \end{smallmatrix}\middle|
\begin{smallmatrix} &&3\\ &2\\
1\\ \end{smallmatrix}\right]}="6", (0,-32)
*+{\left[\begin{smallmatrix} \\ 2\\
&3\end{smallmatrix}\middle| \begin{smallmatrix} &2\\ 1&&3\\
&2\\ \end{smallmatrix}\middle| \begin{smallmatrix} &2\\
1\end{smallmatrix}\right]}="7", (25,-32)
*+{\left[\begin{smallmatrix} 1\\ &2\\
&&3\end{smallmatrix}\middle| \begin{smallmatrix} 1&&3\\
&2\\ \end{smallmatrix}\middle| \begin{smallmatrix}
1\end{smallmatrix}\right]}="8", (50,-32)
*+{\left[\begin{smallmatrix} 1\\ &2\\
&&3\end{smallmatrix}\middle| \begin{smallmatrix} 1\\
&2\end{smallmatrix}\middle| \begin{smallmatrix} &2\\
1\end{smallmatrix}\right]}="9", (-55,-48)
*+{\left[\begin{smallmatrix} 2\\ &3\end{smallmatrix}\middle|
\begin{smallmatrix} &3\\ 2\\ \end{smallmatrix}\middle|\ \right]}="10",
(-35,-48)
*+{\left[\begin{smallmatrix}
3\end{smallmatrix}\middle| \begin{smallmatrix} &3\\
2\\ \end{smallmatrix}\middle| \begin{smallmatrix} &&3\\
&2\\1\\ \end{smallmatrix}\right]}="11", (-10,-48)
*+{\left[\begin{smallmatrix}
3\end{smallmatrix}\middle| \begin{smallmatrix} 1&&3\\
&2\\ \end{smallmatrix}\middle| \begin{smallmatrix}
1\end{smallmatrix}\right]}="12", (10,-48)
*+{\left[\begin{smallmatrix} \\ 2\\
&3\end{smallmatrix}\middle| \begin{smallmatrix}
2\end{smallmatrix}\middle| \begin{smallmatrix} &2\\
1\end{smallmatrix}\right]}="13", (35,-48)
*+{\left[\begin{smallmatrix} 1\\ &2\\
&&3\end{smallmatrix}\middle| \begin{smallmatrix} 1\\
&2\end{smallmatrix}\middle| \begin{smallmatrix}
1\end{smallmatrix}\right]}="14", (55,-48)
*+{\left[\ \middle|\begin{smallmatrix} 1\\ &2\end{smallmatrix}\middle|
\begin{smallmatrix} &2\\ 1\end{smallmatrix}\right]}="15",
(-50,-64)
*+{\left[\begin{smallmatrix}
3\end{smallmatrix}\middle| \begin{smallmatrix} &3\\
2\\ \end{smallmatrix}\middle|\ \right]}="16", (-25,-64)
*+{\left[\begin{smallmatrix} 2\\
&3\end{smallmatrix}\middle| \begin{smallmatrix}
2\end{smallmatrix}\middle|\ \right]}="17", (-0,-64)
*+{\left[\begin{smallmatrix} 3\end{smallmatrix}\middle|\ \middle|
\begin{smallmatrix} 1\end{smallmatrix}\right]}="18", (25,-64)
*+{\left[\ \middle|\begin{smallmatrix}
2\end{smallmatrix}\middle| \begin{smallmatrix} &2\\
1\end{smallmatrix}\right]}="19", (50,-64)
*+{\left[\ \middle|\begin{smallmatrix} 1\\
&2\end{smallmatrix}\middle| \begin{smallmatrix}
1\end{smallmatrix}\right]}="20", (-30,-80)
*+{\left[\begin{smallmatrix}
3\end{smallmatrix}\middle|\ \middle|\ \right]}="21", (0,-80)
*+{\left[\ \middle|\begin{smallmatrix}
2\end{smallmatrix}\middle|\ \right]}="22", (30,-80)
*+{\left[\ \middle|\ \middle|\begin{smallmatrix}
1\end{smallmatrix}\right]}="23", (0,-96)
*+{\left[\ \middle|\ \middle|\ \right]}="24",  \ar"1";"2",|{1}
\ar"1";"3",|{2} \ar"1";"4",|{3} \ar"2";"5",|{2} \ar"2";"7",|{3} \ar"3";"6",|{1}
\ar"3";"8",|{3} \ar"4";"7",|{1} \ar"4";"9",|{2} \ar"5";"10",|{3} \ar"5";"11",|{1}
\ar"6";"11",|{2} \ar"6";"12",|{3} \ar"7";"13",|{2} \ar"8";"12",|{1} \ar"8";"14",|{2}
\ar"9";"14",|{3} \ar"9";"15",|{1} \ar"10";"16",|{1} \ar"10";"17",|{2} \ar"11";"16",|{3}
\ar"12";"18",|{2} \ar"13";"17",|{3} \ar"13";"19",|{1} \ar"14";"20",|{1}
\ar"15";"19",|{2} \ar"15";"20",|{3} \ar"16";"21",|{2} \ar"17";"22",|{1}
\ar"18";"21",|{3} \ar"18";"23",|{1} \ar"19";"22",|{3} \ar"20";"23",|{2}
\ar"21";"24",|{1} \ar"22";"24",|{2} \ar"23";"24",|{3} \end{xy}$$
\end{enumerate}
\end{example}


\begin{thebibliography}{101}

\bibitem[A1]{A1} T. Adachi, {\it The classification of $\tau$-tilting modules over Nakayama
algebras}, J. Algebra, 452(2016), 227-262.

\bibitem[A2]{A2} T. Adachi, {\it Characterizing $\tau$-rigid-finite algebras with radical
square zero,} Proc. Amer. Math. Soc., 144(11)(2016), 4673-4685.

\bibitem[AAC]{AAC} T. Adachi, T. Aihara and A. Chan, {\it Classification of two-term tilting complexes over Brauer graph algebras,} Math. Z., 290(2)(2018),1-36.

\bibitem[AIR]{AIR} T. Adachi, O. Iyama and I. Reiten, {\it $\tau$-tilting
theory}, Compos. Math., {150}(3)(2014), 415-452.

\bibitem [AiI]{AiI} T. Aihara and O. Iyama, {\it Silting mutation in
triangulated categories}, J. Lond. Math. Soc., {85}(3)(2012),
633-668.

\bibitem [AnHK]{AnHK} L. Angeleri H\"{u}gel, D, Happel and H. Krause,
{\it Handbook of tilting modules}, London Math. Soc. Lecture Notes
Series, {332}.

\bibitem [AnMV]{AnMV} L. Angeleri H\"{u}gel, F. Marks, J. Vit\'oria, {\it Silting
modules}, Int. Math. Res. Not., 4(2016), 1251-1284.

\bibitem[AsSS]{AsSS} I. Assem, D. Simson and A. Skowro\'nski,
{\it Elements of the Representation Theory of Associative Algebras.
Vol. 1. Techniques of Reperesentation Theory,} London Math. Soc.
Student Texts, {65}, Cambridge Univ. Press, Cambridge, 2006.

\bibitem[AuB]{AuB} M. Auslander, M. Bridger, {\it Stable module theory}, Memoirs
Amer. Math. Soc., No. {94}, American Mathematical Society,
Providence, R.I. 1969, 146.

\bibitem [AuPR] {AuPR} M. Auslander, M. I. Platzeck and I. Reiten, {\it Coxeter functions without
diagrams,} Trans. Amer. Math. Soc., {250}(1979), 1-12.

\bibitem[AuR] {AuR} M. Auslander and I. Reiten, {\it Cohen-Macaulay and Gorenstein algebras.} In: Representation theory of finite
groups and finite-dimensional algebras, Bielefeld, 1991, edited by
G. O. Michler and C. M. Ringel, Progr. Math., Vol. 95, Birkhauser,
Basel, 1991, 221-245.

\bibitem[AuRS]{AuRS} M. Auslander, I. Reiten and S. O. Smal$\phi$, {\it Representation Theory of Artin
Algebras}, Cambridge Studies in Advanced Mathematics, 36. Cambridge
University Press, Cambridge, 1997.


\bibitem[AuS]{AuS} M. Auslander and S. O. Smal$\phi$, {\it Almost split sequences in
subcategories,} J. Algebra, {69}(1981), 426-454, Addendum; J.
Algebra, {71}(1981), 592-597.

\bibitem[B]{B} K. Bongartz, {\it Tilted Algebras}, Proc. ICRA III (Puebla 1980), Lecture Notes in Math. No.{ 903}, Springer-Verlag
1981, 26-38.

\bibitem[BGP]{BGP} I. N. Bernstein, I. M. Gelfand and V. A. Ponomarev, {\it Coxeter functors and Gabriel's theorem}, Russ. Math.
Surv. {28}(1973), 17-32.

\bibitem[BjB]{BjB} A. Bjorner and F. Brenti, {\it Combinatorics of Coxeter groups: Graduate
Texts in Mathematics}, 231. Springer, New York, 2005.

\bibitem[BrB]{BrB} S. Brenner and M. C. R. Butler, Generalization of the
Bernstein-Gelfand-Ponomarev reflection functors, Lecture Notes in
Math. 839, Springer-Verlag (1980), 103-169.

\bibitem[BHRR]{BHRR} T. Br\"{u}stle, L. Hille, C. M. Ringel and G. R\"{o}hrle.
 {\it The $\Delta$-Filtered Modules Without Self-Extensions for
the Auslander Algebra of $k[T]/ \langle T^{n} \rangle$,}
 Algebr. Represent. Theory, {2}(1999), 295-312.

\bibitem[BIRS]{BIRS} A. B. Buan, O. Iyama, I. Reiten and J. Scott, {\it Cluster structures for
2-Calabi-Yau categories and unipotent groups}. Compos. Math., {145}
(4)(2009), 1035-1079.

\bibitem[BMRRT]{BMRRT} A. B. Buan, R. Marsh, M. Reineke, I. Reiten and G.
Todorov, {\it Tilting theory and cluster combinatorics}, Adv. Math.
204(2)(2006), 572-618.

\bibitem[DIJ]{DIJ} L. Demonet, O. Iyama and G. Jasso, {\it $\tau$-tilting finite algebras, bricks and $g$-vectors}, Int. Math. Res. Not., 3(2019), 852-892.

\bibitem[DIRRT]{DIRRT} L. Demonet, O. Iyama, N. Reading, I. Reiten and H. Thomas, {\it Lattice theory of torsion classes}, arXiv:1711.01785.

\bibitem[DF]{DF} H. Derksen and J. Fei, {\it General presentations of algebras}, Adv. Math., 278(2015), 210-237.

\bibitem[Ha]{Ha} D. Happel, {\it Triangulated categories in the representation theory
of finite-dimensional algebras,} London Mathematical Society Lecture
Note Series, 119. Cambridge University Press, Cambridge, 1988.

\bibitem[HaR]{HaR} D. Happel and C. M. Ringel, {\it Tilted algebras,}
Trans. Amer. Math. Soc., 274(2)(1982), 399-443.

\bibitem[HaU]{HaU} D. Happel and L. Unger, {\it On a partial order of tilting modules,}  Algebras
and Representation Theory, {8}(2005), 147-156.

\bibitem[HKM]{HKM} M. Hoshino, Y. Kato, J. Miyachi, {\it On $t$-structures and torsion theories induced by compact objects}, J. Pure Appl. Algebra, 167(2002), no. 1, 15-35.

\bibitem[HuZ]{HuZ} Z. Huang and Y. Zhang, {\it G-stable support $\tau$-tilting
modules,} Front. Math. China, 11(4)(2016), 1057-1077.

\bibitem[IJY]{IJY} O. Iyama, P. Jorgensen and D. Yang, {\it Intermediate
co-t-structures, two-term silting objects, $\tau$-tilting modules
and torsion classes,} Algebra Number Theory, {8}(10)(2014),
2413-2431.

\bibitem[IR]{IR} O. Iyama and I. Reiten, {\it Fomin-Zelevinsky mutation and tilting modules
over Calabi-Yau algebras}, Amer. J. Math., 130(4)(2008), 1087-1149.

\bibitem [IRRT]{IRRT} O. Iyama, N. Reading, I. Reiten and H. Thomas, {\it Lattice structure of Weyl groups via representation theory of preprojective algebras,}  Compos. Math., 154(6)(2018), 1269-1305.


\bibitem[IY]{IY} O. Iyama and Y. Yoshino, {\it Mutations in triangulated categories and rigid Cohen-Macaulay
modules,} Invent. Math., {172}(2008), 117-168.

\bibitem[IZ]{IZ} O. Iyama and X. Zhang, {\it Tilting modules over Auslander-Gorenstein algebras}, to appear in Pacific J. Math, arXiv:1801.04738.

\bibitem[KR]{KR} B. Keller and I. Reiten, {\it Cluster-tilted algebras are Gorenstein
and stably Calabi-Yau,} Adv. Math., {211}(1)(2007), 123-151.

\bibitem[KV]{KV} B. Keller and D. Vossieck, {\it Aisles in derived categories},  Bull. Soc. Math. Belg. S\'er. A 40(2)(1988), 239-253.

\bibitem[J]{J} G. Jasso, {\it Reduction of $\tau$-Tilting Modules and Torsion Pairs,} Int. Math. Res. Not.,
{16}(2015), 7190-7237.

\bibitem[M]{M} Y. Mizuno, {\it Classifying $\tau$-tilting modules over preprojective algebras
of Dynkin type,} Math. Zeit., {277}(3)(2014), 665-690.

\bibitem[RS]{RS} C. Riedtmann, A. Schofield, {\it On a simplicial complex associated
with tilting modules,} Comment. Math. Helv., {66}(1)(1991), 70-78.

\bibitem[T]{T} Y. Tsujioka, {\it Tilting modules over the Auslander algebra of $K[x]/(x^n)$,}
Master Thesis in Graduate School of Mathematics in Nagoya
University, 2008.

\bibitem[W]{W} J. Wei, {\it $\tau$-tilting modules and $*$-modules},
J. Algebra, 414(2014), 1-5.

\bibitem[Z1]{Z1} X. Zhang, {\it $\tau$-rigid modules for algebras with radical square
zero,} arXiv:1211.5622.

\bibitem[Z2]{Z2} X. Zhang, {\it $\tau$-rigid modules over Auslander algebras,} Taiwanese J. Math,
21(4)(2017), 327-338.

\bibitem[Zh]{Zh} Y. Zhang, {\it On mutation of $\tau$-tilting
modules,} Comm. Algebra, 45(6)(2017), 2726-2729.




\end{thebibliography}
\end{document}